\newtheorem{theorem}{Theorem}
\newtheorem{corollary}[theorem]{Corollary}
\newtheorem{lemma}[theorem]{Lemma}
\newtheorem{assumption}{Assumption}
\newtheorem{definition}{Definition}
\newtheorem{remark}{Remark}
\newcommand{\josh}[1]{\ifthenelse{\boolean{showcomments}}
	{ \textcolor{red}{(Josh says:  #1)}}{}}
\newcommand{\zhenhua}[1]{\ifthenelse{\boolean{showcomments}}
	{ \textcolor{red}{(Zhenhua says:  #1)}}{}}
\newcommand{\andrey}[1]{\ifthenelse{\boolean{showcomments}}
	{ \textcolor{red}{(Andrey says:  #1)}}{}}
\newcommand{\outline}[1]{\ifthenelse{\boolean{showcomments}}
	{ \textcolor{blue}{Outline:  #1}}{}}
\newcommand{\addcite}[0]{\ifthenelse{\boolean{showcomments}}
	{ \textcolor{green}{(add citation(s))}}{}}
\newcommand{\addref}[0]{\ifthenelse{\boolean{showcomments}}
	{ \textcolor{green}{(add ref)}}{}}
\newcommand{\todo}[1]{\ifthenelse{\boolean{showcomments}}
	{ \textcolor{red}{(To do:  #1)}}{}}
\newcommand{\fixes}[1]{\ifthenelse{\boolean{showfixes}}
	{\textcolor{black}{#1}}{}}
\begin{document}
%
\title{Sample Complexity of Power System State Estimation using Matrix Completion}

\author{\IEEEauthorblockN{Joshua Comden\IEEEauthorrefmark{1},
Marcello Colombino\IEEEauthorrefmark{2},
Andrey Bernstein\IEEEauthorrefmark{3}, and
Zhenhua Liu\IEEEauthorrefmark{1}}
\IEEEauthorblockA{\IEEEauthorrefmark{1}Stony Brook University, \{joshua.comden, zhenhua.liu\}@stonybrook.edu}
\IEEEauthorblockA{\IEEEauthorrefmark{2}McGill University, marcello.colombino@mcgill.ca}
\IEEEauthorblockA{\IEEEauthorrefmark{3}National Renewable Energy Laboratory, andrey.bernstein@nrel.gov}}


\maketitle
\thispagestyle{plain}
\pagestyle{plain}
\begin{abstract}
	In this paper, we propose an analytical framework to quantify the amount of data samples needed to obtain accurate state estimation in a power system — a problem known as \emph{sample complexity} analysis in computer science. Motivated by the increasing adoption of distributed energy resources into the distribution-level grids, it becomes imperative to estimate the state of distribution grids in order to ensure stable operation. Traditional power system state estimation techniques mainly focus on the transmission network which involve solving an overdetermined system and eliminating bad data. However, distribution networks are typically \emph{underdetermined} due to the large number of connection points and high cost of pervasive installation of measurement devices. In this paper, we consider the recently proposed state-estimation method for underdetermined systems that is based on \emph{matrix completion}. In particular, a \emph{constrained} matrix completion algorithm was proposed, wherein the standard matrix completion problem is augmented with additional equality constraints representing the physics (namely power-flow constraints). We analyze the sample complexity of this general method by proving an upper bound on the sample complexity that depends directly on the properties of these constraints that can lower number of needed samples as compared to the unconstrained problem. To demonstrate the improvement that the constraints add to distribution state estimation, we test the method on a 141-bus distribution network case study and compare it to the traditional least squares minimization state estimation method.

\end{abstract}


%

\section{Introduction}
\begin{figure*}
    \centering
    \subfigure[18 Bus \cite{grady1992application}]{\includegraphics[width=0.65\columnwidth]{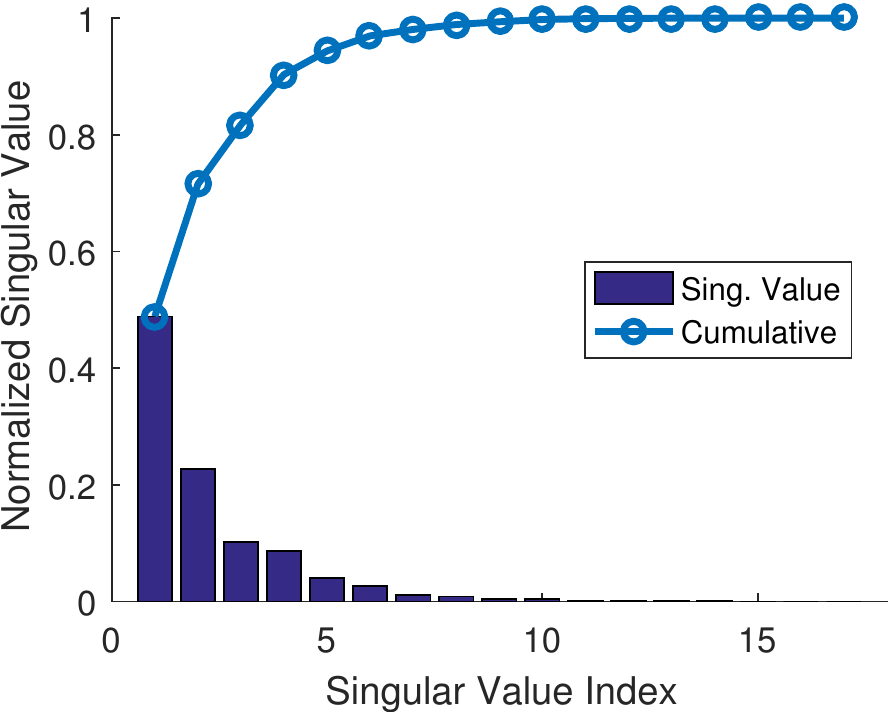}}
    \subfigure[22 Bus \cite{raju2012direct}]{\includegraphics[width=0.65\columnwidth]{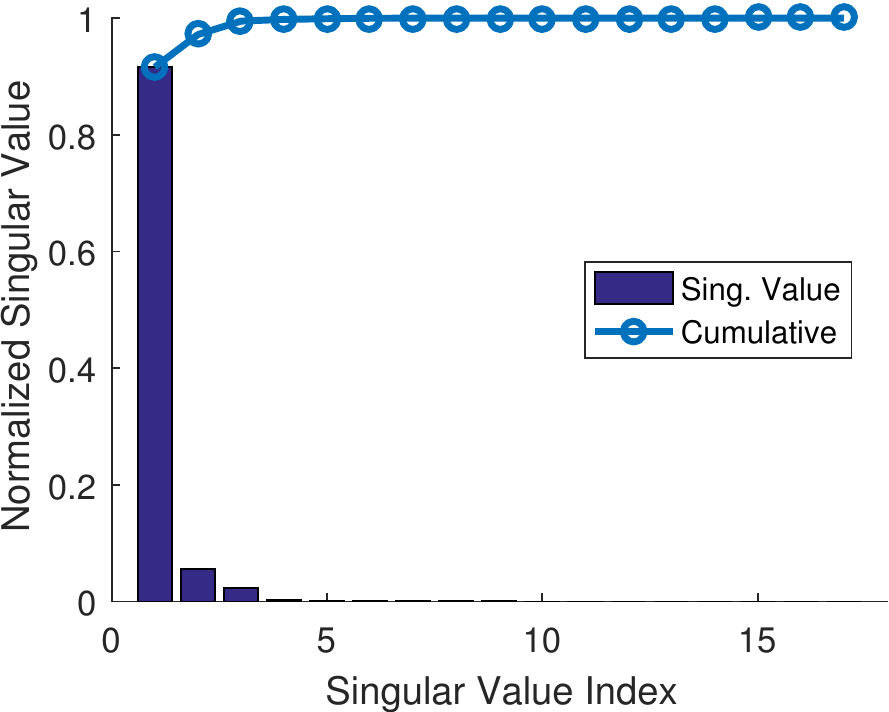}}
    \subfigure[33 Bus \cite{baran1989network}]{\includegraphics[width=0.65\columnwidth]{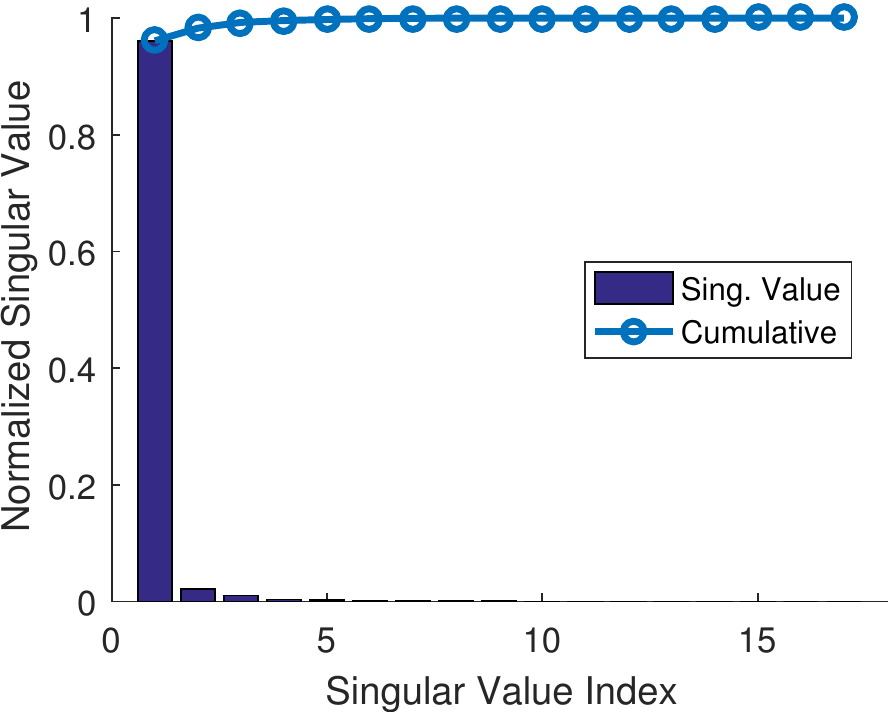}}
    \subfigure[69 Bus \cite{das2008optimal}]{\includegraphics[width=0.65\columnwidth]{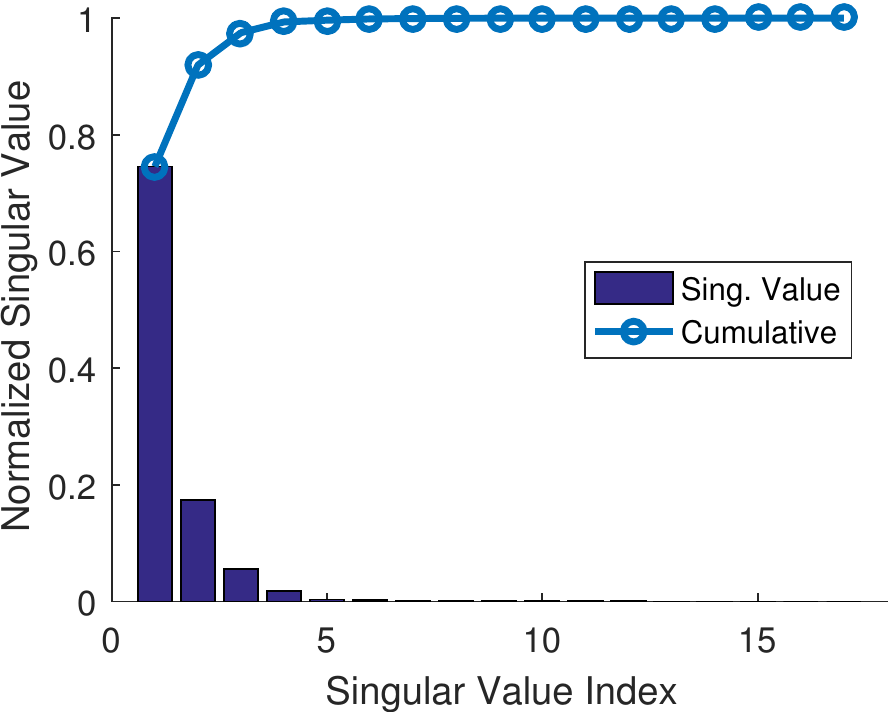}}
    \subfigure[85 Bus \cite{das1995simple}]{\includegraphics[width=0.65\columnwidth]{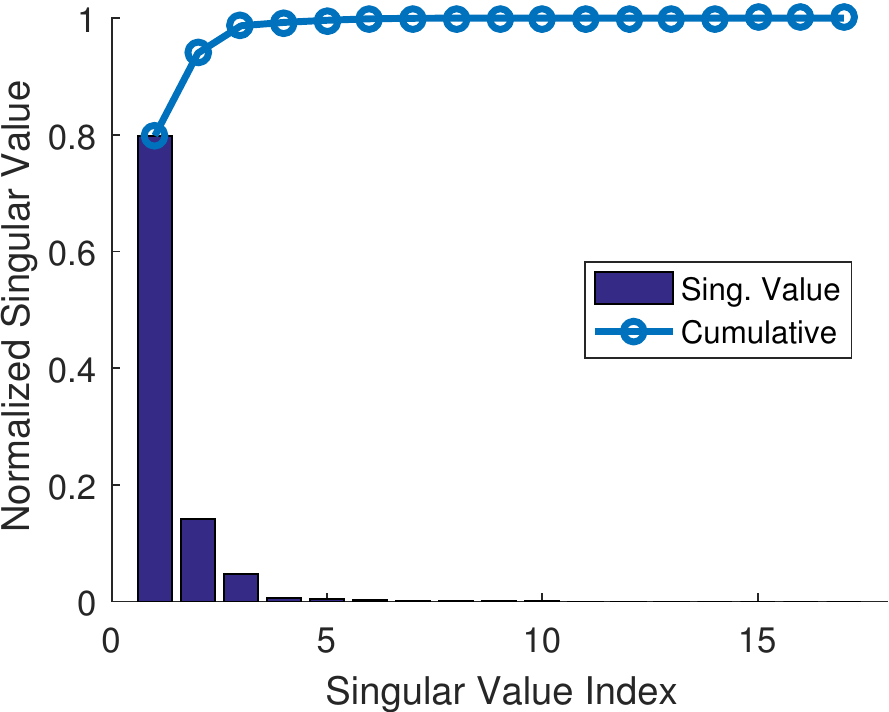}}
    \subfigure[141 Bus \cite{khodr2008maximum}]{\includegraphics[width=0.65\columnwidth]{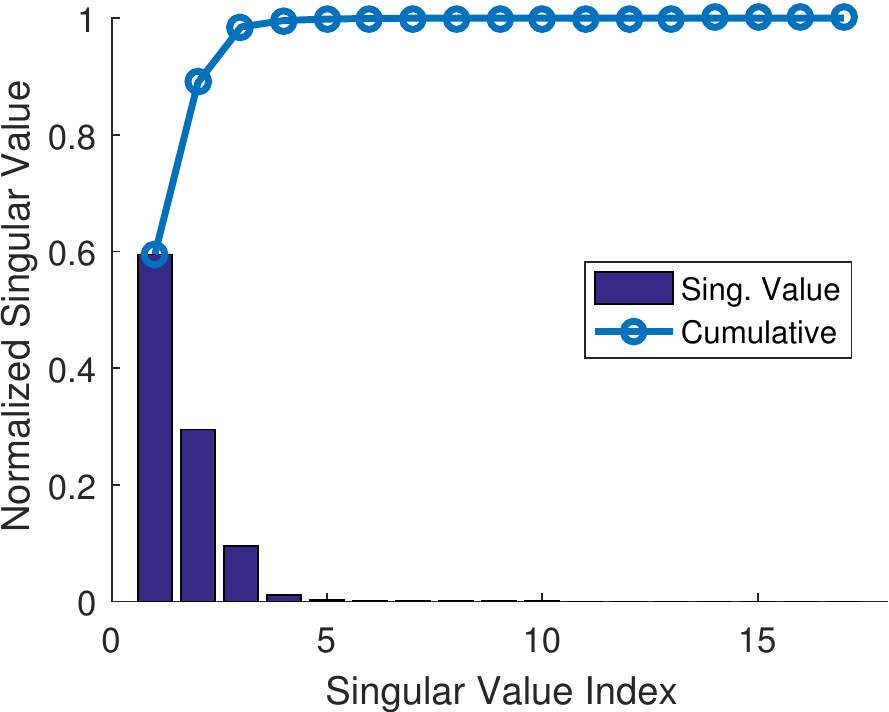}}
    \caption{Singular values of matrices that represent the states of six different radial distribution test cases.
    The bars are the individual values while the circles are the cumulative values.
    All singular values are normalized by their sum.}
    \label{fig:cases_scree}
\end{figure*}


State estimation is one of the fundamental data analysis tasks in power systems. In its classical form, it amounts to estimating voltage phasors at all the buses of the network given some data gathered from the network~\cite{liacco1982role}. It has a long and established history in transmission networks, where classical approaches based on weighted least-squares methods are applicable due to full observability of the network \cite{Abur2004}. The latter conditions roughly speaking mean that the underlying system of equations for the estimation problem is \emph{overdetermined}, i.e., it has more observables (and, hence, equations) than unknown variables~\cite{meliopoulos2001power}. In traditional distribution networks, however, state estimation is typically not used, or used very rarely~\cite{dehghanpour2018survey}. There are two main reasons for that. First, unlike in transmission networks, there is a lack of pervasive installation of measurement devices such as phasor measurement units (PMUs)~\cite{sexauer2013phasor,bolognani2014state}. Hence, the estimation problem is \emph{underdetermined} and so classical, simple approaches (e.g. weighted least-squares) cannot be applied since they require full observability~\cite{baran2001}. Second, since (traditional) distribution networks are mostly passive and overprovisioned~\cite{della2014electrical,ochoa2010distribution}, there has not been much motivation to develop state estimation algorithms apart from simple heuristics (e.g., based on simple load-allocation rules~\cite{deng2002branch,pereira2004integrated}).

However, recently, distribution networks have undergone a radical change due to massive penetration of distributed energy resources (DERs) at the edge of the network~\cite{driesen2008design,kulmala2014coordinated,syahputra2014optimal,song2013operation}. This creates both challenges and opportunities. On the challenges side, DERs (and especially renewable energy resources such as photovoltaic panels and wind farms) introduce a lot of uncertainty into the system~\cite{katiraei2011solar,inman2013solar,hill2012battery,energy2010western}. Hence, traditional approaches for operating the network are not applicable anymore, and there is a need in active and accurate inference for decision making. In particular, accurate real-time state estimation is needed to ensure stable and safe operation of the network~\cite{della2014electrical,huang2012state}. On the opportunities side, the vast deployment of DERs introduces both control and measurement points that now allow the application of modern machine learning and data analytics methods to deal with problems such as state estimation~\cite{alarcon2010multi,basak2012literature}. However, observability is still an issue: the corresponding estimation problem is typically underdetermined, and hence use of psuedo-measurements, structured estimation methods, and methods that use historical data to complete the missing information, have been proposed \cite{baran95,singh2009,manitsas2012,wu2013,bhela2018,donti2019matrix}.

In this paper, we consider the recently proposed method for state estimation in underdetermined systems using \emph{low-rank matrix completion} \cite{donti2019matrix}. The method is based on augmenting the standard matrix completion approach \cite{candes2009exact} with power-flow constraints which provide an additional link between parameter values. As shown in \cite{donti2019matrix} numerically with extensive simulations, this structured (or \emph{physics-based}) approach performs very well in distribution networks under realistic low-observability scenarios. In the present paper, we set our goal to study the sample complexity of this approach. 

Sample complexity in power-system state estimation is largely unexplored. 
Roughly speaking, sample complexity is the amount of data samples needed to obtain accurate estimation of the true state. 
Even in the case of the classical weighted least-squares methods, the literature is scarce whereas there is active research in computer science and machine learning community on the topic \cite{simchowitz18a,rantzer2018}.
To that end, we extend the sample complexity analysis of the standard (unconstrained) matrix completion \cite{candes2009exact,candes2010matrix} to the general constrained case to obtain better theoretical bounds.
The main theoretical challenge is on how to measure the information from the added constraints in terms of the amount of sampled state variables, which can be used to partially replace the need for a specific number of measurements.
The theoretical results are further verified by the significant reduction in sample sizes illustrated by numerical evaluations using a distribution network test case. 

The main contribution of this paper is two-fold. From the theoretical perspective, we derive new results on the sample complexity of the general matrix completion problem for the constrained case with equality constraints that can lower number of needed samples as compared to the unconstrained problem. On the practical side, we demonstrate that by incorporating additional physical constraints, the sample sizes are greatly reduced. This is crucial for state estimation in the distribution networks which are often underdetermined, and therefore pave the road for more sophisticated control and optimization based on the state estimation. 
To the best of our knowledge, these are the first results in the literature on sample complexity for constrained matrix completion in general, and for state estimation on power systems in particular.

\section{Motivational Example}
\label{sec:motiv_examp}
The objective of this motivational example is to give evidence of why using low-rank matrix completion techniques make appropriate approximations for the state of a distribution system.
Singular Value Decomposition (SVD) is used in Principal Component Analysis (PCA) to extract lower dimensional feature vectors from high dimensional data sets.
This works under the assumption that most of the useful information within a data set can be captured by a highly compressible matrix, i.e. a lower rank matrix, so that more intensive computational analysis can be performed with minimal loss in accuracy~\cite{abdi2010principal}.
In the case of low-rank matrix completion for state-estimation, we will use the structure from this lower rank approximation to fill in unmeasured state variables.

SVD decomposes any matrix into a linear combination of rank-1 matrices.
Specifically if a matrix has rank $r$, then SVD will decompose it into $r$ rank-1 matrices.
The scalars which multiply the $r$ rank-1 matrices in the linear combination are called singular values.
In either PCA or low-rank matrix completion, it is important to first show that most of information can in fact be obtained by $r$ rank-1 matrices as measured by the sum of the largest $r$ singular values.
In the rest of this section, we will show that this to be true for distribution network states.

Six different radial distribution network test cases were simulated on MATPOWER~\cite{zimmerman2011matpower} using their default settings which are based on real-world systems\cite{grady1992application,raju2012direct,baran1989network,das2008optimal,das1995simple,khodr2008maximum}.
In addition to the six test cases, we used a different IEEE 123 bus distribution network~\cite{kersting1991radial} which had its 3-phase voltages and power injections simulated over an entire week with one-minute time granularity using OpenDSS.
For each network, its state was organized into a matrix with the rows corresponding to buses and lines, and the columns corresponding to the state variable type.
See Section \ref{sec:state_est_prob} for more details on the matrix structure.

For each distribution network state matrix, SVD was performed, and the corresponding singular values are  plotted in Figure \ref{fig:cases_scree} in descending order.
A normalized singular value represents the fraction of the state matrix represented by its associated rank-1 matrix.
Therefore, the $k$th cumulative normalized value gives the fraction of the state matrix represented by the first $k$ rank-1 matrices and their associated singular values.
In Figure \ref{fig:cases_scree}, the high cumulative values at a low singular value index means that most of the information in the state matrix can be represented by the first few rank-1 matrices.
Specifically in all cases, more than 95\% of the state information can be recovered by just 5 rank-1 matrices.
This was also found to be true for the 123 bus network which is shown in Figure \ref{fig:ieee123_week_scree} as the average for all the states over the measured week.
Error bars were added to the figure to show the minimum and maximum normalized singular values.
Because the distance between the minimum and maximum values were was only 0.0045, they are almost indistinguishable from the average value and indicates that the singular values change very little even when the state changes.
For this reason, low-rank matrix completion techniques can be useful in estimating the state of a distribution network from only measuring a small of the connection points.

\begin{figure}
    \centering
    \includegraphics[width=0.85\columnwidth]{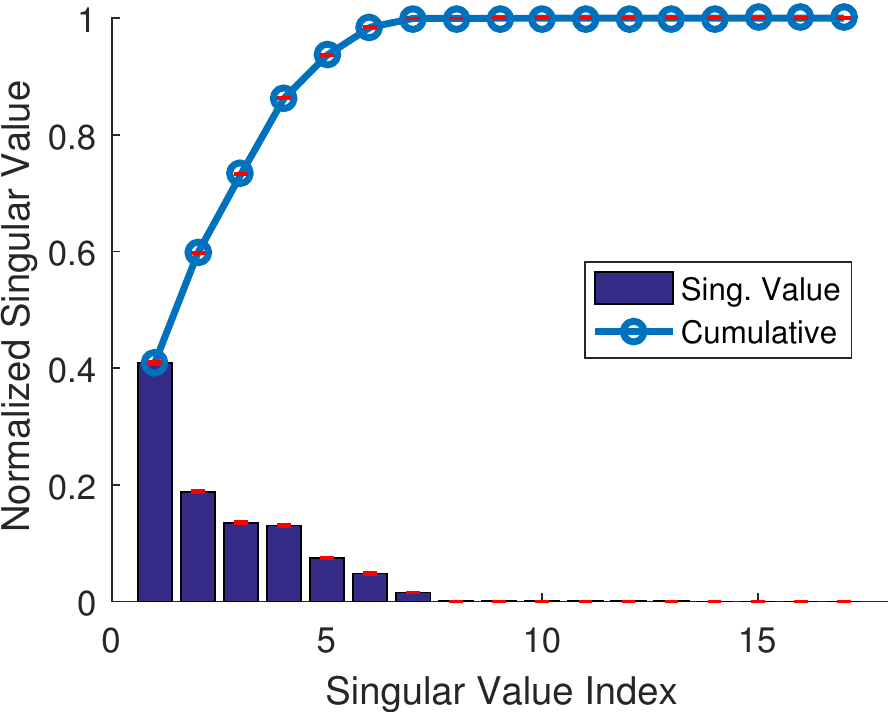}
    \caption{Singular values of matrices that represent the states of the IEEE 123 bus distribution network~\cite{kersting1991radial} over a week.
    The bars are the individual values while the circles are the cumulative values.
    All singular values are normalized by their sum.  The red error bars show the minimum and maximum values.}
    \label{fig:ieee123_week_scree}
\end{figure}

\section{Problem Formulation}
\label{sec:prob}

\paragraph{Notation}
A column vector $\mathbf{x}$ is represented by a bold lowercase letter and a matrix $\mathbf{X}$ is represented by bold uppercase letter, while a scalar $x$ or an entry $X_{ij}$ are not bold and can be either upper or lower case.
For complex number $x$, let $\text{Re}(x)$, $\text{Im}(x)$, and $|x|$ be its real component, its imaginary component, and its magnitude respectively. 
The $k$th matrix $\mathbf{X}^{(k)}$ in a sequence may be labeled by a superscript in parenthesis.
$\mathbf{I}_n$ is the $n\times n$ identity matrix.
A calligraphic letter $\mathcal{X}$ can be a set, vector space, or operator which will be distinctly made clear in context.
Specifically, $\mathcal{P}_\mathcal{X}$ is the orthogonal projection onto vector space $\mathcal{X}$.
The perpendicular vector space to $\mathcal{X}$ is $\mathcal{X}^\perp$.
The transpose of matrix $\mathbf{X}$ is $\mathbf{X}^\intercal$.
The $\ell_2$-norm of vector $\mathbf{x}$ is $\|\mathbf{x}\|$.
The Euclidean inner product of matrices $\mathbf{A}\in\mathbb{R}^{n_1\times n_2}$ and $\mathbf{X}\in\mathbb{R}^{n_1\times n_2}$ is $\langle \mathbf{A},\mathbf{X} \rangle:=\text{trace}(\mathbf{A}^\intercal\mathbf{X})$.
The Frobenius norm of matrix $\mathbf{X}$ is $\|\mathbf{X}\|_F:=\sqrt{\langle\mathbf{X},\mathbf{X}\rangle}=\sqrt{\sum_i\sum_j|X_{ij}|^2}$.
The nuclear norm of matrix $\mathbf{X}$ is denoted by $\|\mathbf{X}\|_*$ and is the sum of the its singular values, while the spectral norm is denoted $\|\mathbf{X}\|$ and is the value of its largest singular value.
\fixes{The Schatten b-norm of $\mathbf{X}$ is $\|\mathbf{X}\|_{S_b}:=\left(\sum_{k=1}^{n_2} (\sigma_k(\mathbf{X}))^b\right)^{\frac{1}{b}}$ where $\sigma_k(\mathbf{X})$ is its $k$-th largest singular value.}
The norm of the operator $\mathcal{R}$ is its spectral norm denoted as $\|\mathcal{R}\|:=\sup_{\mathbf{X}:\|\mathbf{X}\|_F\leq 1}\|\mathcal{R}(\mathbf{X})\|_F$.

\subsection{Power System Model}

Consider a power network with $n_\text{b}$ PQ buses in the set $\mathcal{N}$ and $n_\text{l}$ lines in the set $\mathcal{L}\subset\mathcal{N}\times\mathcal{N}$.
For each line $(s,t)\in\mathcal{L}$, bus $s$ is denoted as the ``From" bus and bus $t$ is denoted as the ``To" bus.
Typically in a radial distribution network, the slack or feeder bus is labeled as bus 1 and all other buses are labeled sequentially outward so that when the lines are directed away from the feeder, the From bus has a smaller index than the To bus.

Complex power is split into its real and reactive components represented by $P$ and $Q$ respectively.
Power flows across lines are treated as injections into the line from both the From and To sides so that their sum equals the power Loss:
\begin{subequations}
    \label{eq:syseqn_line_pwrcons}
    \begin{align}
        P^\text{From}_{s,t} + P^\text{To}_{s,t} & = P^\text{Loss}_{s,t} & \quad \forall (s,t)\in\mathcal{L} \\
        Q^\text{From}_{s,t} + Q^\text{To}_{s,t} & = Q^\text{Loss}_{s,t} & \quad \forall (s,t)\in\mathcal{L}.
    \end{align}
\end{subequations}
Therefore from the conservation of power at each bus, its power injection into the bus must equal the power injections into the lines it is connected to:
\begin{subequations}
    \label{eq:syseqn_bus_pwrcons}
    \begin{align}
        P_s & = \sum_{t:(s,t)\in\mathcal{L}}P^\text{From}_{s,t} + \sum_{t:(t,s)\in\mathcal{L}}P^\text{To}_{t,s} & \quad \forall s\in\mathcal{N} \\
        Q_s & = \sum_{t:(s,t)\in\mathcal{L}}Q^\text{From}_{s,t} + \sum_{t:(t,s)\in\mathcal{L}}Q^\text{To}_{t,s} & \quad \forall s\in\mathcal{N}.
    \end{align}
\end{subequations}

The complex current injection $I_s$ at each bus $s$ and the complex current flow $I_{s,t}$ across each line $(s,t)$ follow Kirchhoff's Current Law:
\begin{subequations}
    \label{eq:syseqn_bus_kirchoffs_law}
    \begin{align}
        \sum_{t:(s,t)\in\mathcal{L}}\text{Re}\left(I_{s,t}\right) & = \text{Re}\left(I_s\right) + \sum_{t:(t,s)\in\mathcal{L}}\text{Re}\left(I_{t,s}\right) & \forall s\in\mathcal{N} \\
        \sum_{t:(s,t)\in\mathcal{L}}\text{Im}\left(I_{s,t}\right) & = \text{Im}\left(I_s\right) + \sum_{t:(t,s)\in\mathcal{L}}\text{Im}\left(I_{t,s}\right) & \forall s\in\mathcal{N}.
    \end{align}
\end{subequations}
Additionally, using the complex voltage $V_s$ at each bus, Ohm's Law relates the voltage difference between the two sides of a line to its current flow:
\begin{subequations}
    \label{eq:syseqn_ohms_law}
    \begin{align}
        \text{Re}\left(I_{s,t}\right) & = G_{s,t}(\text{Re}(V_s)-\text{Re}(V_t)) - B_{s,t}(\text{Im}(V_s)-\text{Im}(V_t)) \nonumber \\
        & \quad\quad\quad\quad\quad\quad\quad\quad\quad\quad\quad\quad\quad\quad\quad\quad \forall (s,t)\in\mathcal{L} \\
        \text{Im}\left(I_{s,t}\right) & = B_{s,t}(\text{Re}(V_s)-\text{Re}(V_t)) + G_{s,t}(\text{Im}(V_s)-\text{Im}(V_t)) \nonumber \\
        & \quad\quad\quad\quad\quad\quad\quad\quad\quad\quad\quad\quad\quad\quad\quad\quad \forall (s,t)\in\mathcal{L}
    \end{align}
\end{subequations}
where $G_{s,t}$ and $B_{s,t}$ are the conductance and susceptance of line $(s,t)$.
The power injections into each line from either side are determined from its current flow and voltage on that side:
\begin{subequations}
    \label{eq:syseqn_PVR}
    \begin{align}
        P^\text{From}_{s,t} & = \text{Re}(V_s)\text{Re}(I_{s,t})+\text{Im}(V_s)\text{Im}(I_{s,t})  & \quad \forall (s,t)\in\mathcal{L} \\
        Q^\text{From}_{s,t} & = \text{Im}(V_s)\text{Re}(I_{s,t})-\text{Re}(V_s)\text{Im}(I_{s,t}) & \quad \forall (s,t)\in\mathcal{L} \\
        P^\text{To}_{s,t} & = -(\text{Re}(V_t)\text{Re}(I_{s,t})+\text{Im}(V_t)\text{Im}(I_{s,t})) & \quad \forall (s,t)\in\mathcal{L} \\
        Q^\text{To}_{s,t} & = -(\text{Im}(V_t)\text{Re}(I_{s,t})-\text{Re}(V_t)\text{Im}(I_{s,t})) & \quad \forall (s,t)\in\mathcal{L}
    \end{align}
\end{subequations}
The power loss across each line are determined from its magnitude of the current flow $|I_{s,t}|$:
\begin{subequations}
    \label{eq:syseqn_loss}
    \begin{align}
        P^\text{Loss}_{s,t} & = R_{s,t}|I_{s,t}|^2 & \quad \forall (s,t)\in\mathcal{L} \\
        Q^\text{Loss}_{s,t} & = X_{s,t}|I_{s,t}|^2 & \quad \forall (s,t)\in\mathcal{L}
    \end{align}
\end{subequations}
where $R_{s,t}$ and $X_{s,t}$ are the resistance and reactance of line $(s,t)$.

Trivially, we also have the magnitudes of the complex voltages and currents derived from their real and imaginary parts:
\begin{subequations}
    \label{eq:syseqn_mag}
    \begin{align}
        |V_s| & = \sqrt{\text{Re}(V_s)^2+\text{Im}(V_s)^2} & \quad \forall s\in\mathcal{N} \\
        |I_s| & = \sqrt{\text{Re}(I_s)^2+\text{Im}(I_s)^2} & \quad \forall s\in\mathcal{N} \\
        |I_{s,t}| & = \sqrt{\text{Re}(I_{s,t})^2+\text{Im}(I_{s,t})^2} & \quad  \forall (s,t)\in\mathcal{L}.
    \end{align}
\end{subequations}

\subsection{State Estimation Problem}
\label{sec:state_est_prob}

We represent the state of the power system in a block matrix $\mathbf{M}$ where one matrix $\mathbf{M}_\text{b}$ holds the state of the buses and the other matrix $\mathbf{M}_\text{l}$ holds the state of the lines
\begin{align}
    \mathbf{M} := \left[
        \begin{array}{c|c}
            \mathbf{M}_\text{b} & \mathbf{0} \\
            \hline
            \mathbf{0} & \mathbf{M}_\text{l}
        \end{array}\right]. \nonumber
\end{align}
The state of the buses $\mathbf{M}_\text{b}$ is in an $n_\text{b}\times 8$ matrix which holds the following values in the row associated with bus $s\in\mathcal{N}$:
\begin{align}
    \left(P_s,Q_s,\text{Re}(V_s),\text{Im}(V_s),|V_s|,\text{Re}(I_s),\text{Im}(I_s),|I_s|\right) \nonumber
\end{align}
while the state of the lines $\mathbf{M}_\text{l}$ is in an $n_\text{l}\times 9$ matrix which holds the following values in the row associated with line $(s,t)\in\mathcal{L}$:
\begin{align}
    \big(P^\text{From}_{s,t},Q^\text{From}_{s,t},P^\text{To}_{s,t},Q^\text{To}_{s,t},P^\text{Loss}_{s,t},Q^\text{Loss}_{s,t}, \quad\quad\quad\quad\quad\quad \nonumber \\
    \text{Re}\left(I_{s,t}\right),\text{Im}\left(I_{s,t}\right),\left|I_{s,t}\right|\big). \nonumber
\end{align}

Classically, the state is represented in a more compact form of only the complex bus voltages since, given voltages, all other variables can be computed using   \eqref{eq:syseqn_line_pwrcons}-\eqref{eq:syseqn_mag}. However, complex voltages can only be measured by PMUs which are expensive and are not available at almost all of the buses or lines in a distribution network.
On the other hand, measurements of some of the other  variables are more widely available such as $\left(P_s,Q_s,|V_s|,|I_s|\right)$ for a bus and $\left(P^\text{From}_{s,t},Q^\text{From}_{s,t},P^\text{To}_{s,t},Q^\text{To}_{s,t},\left|I_{s,t}\right|\right)$ for a line.
Let $\Omega$ be the set of state matrix locations which have available measurements.
While it is possible that there may be just enough well placed measurements in $\Omega$ that can uniquely determine the other missing values by using \eqref{eq:syseqn_line_pwrcons}-\eqref{eq:syseqn_mag}, it not likely to be the case in a distribution network.

Therefore, the goal of this \emph{under-determined} state estimation problem is to accurately fill in any unmeasured values in the state matrix $\mathbf{M}$, especially the complex bus voltages, using the available measured values from locations $\Omega$ and the power system equations \eqref{eq:syseqn_line_pwrcons}-\eqref{eq:syseqn_mag}. To that end, it was recently proposed in \cite{donti2019matrix} to leverage the approximately low-rank structure of the state matrix (as was demonstrated in Section \ref{sec:motiv_examp}) to find a \emph{minimum rank matrix} that satisfies  \eqref{eq:syseqn_line_pwrcons}-\eqref{eq:syseqn_mag} and matches the measured state values:
\begin{subequations}
    \label{eq:prob_state_est}
    \begin{align}
        \min_{\mathbf{X}} \quad & \text{rank}(\mathbf{X}) \nonumber \\
        \text{s.t.} \quad & X_{ij}=M_{ij} \quad \forall (i,j)\in\Omega \nonumber \\
        & \eqref{eq:syseqn_line_pwrcons}-\eqref{eq:syseqn_mag}. \nonumber
    \end{align}
\end{subequations}

However, there are two issues with the above problem formulation that make it non-convex, thus computationally hard to solve: (i) the objective function is non-convex; and (ii) the equality constraints \eqref{eq:syseqn_PVR}-\eqref{eq:syseqn_mag} are not linear, therefore the feasible solution space for $\mathbf{X}$ is non-convex. To tackle the first challenge, a standard relaxation using \emph{nuclear norm} \cite{candes2009exact} is used; see Section \ref{sec:nuclear} for details. To tackle the second challenge, constraints \eqref{eq:syseqn_PVR}-\eqref{eq:syseqn_mag} are replaced with their \emph{linear approximation} as proposed in \cite{donti2019matrix}. One must be careful to only add the linear approximations which contain at least one state variable that is not measured; otherwise, the problem may be infeasible.
This is because while $\mathbf{M}$ is assumed to satisfy Equations \eqref{eq:syseqn_line_pwrcons}-\eqref{eq:syseqn_mag}, it is possible that it does not satisfy the linear approximation equations.
For example, the relationship of the voltage magnitude difference across a line and complex power flow $\left(P^\text{Flow}_{s,t},Q^\text{Flow}_{s,t}\right)$ on that line can be linearly approximated for a radial distribution network~\cite{baran1989optimal,donti2019matrix}
\begin{align}
    \label{eq:lin_approx_const}
    |V_t|-|V_s| = \frac{1}{|V_1|}\left(R_{s,t}P^\text{Flow}_{s,t} + X_{s,t}Q^\text{Flow}_{s,t}\right) \quad \forall (s,t)\in\mathcal{L}
\end{align}
where $R_{s,t}$ and $X_{s,t}$ are the resistance and reactance of line $(s,t)$ and $V_1$ is the voltage of the slack bus.
It assumes either that the lines have no losses or that the power flow is so low that losses are negligible.
Since the state $\mathbf{M}$ does not make this assumption and does not encode the power flows directly, we can approximate the power flow by taking the average power injection into the line, i.e. $P^\text{Flow}_{s,t}:=\left(P^\text{From}_{s,t}-P^\text{To}_{s,t}\right)/2$ and $Q^\text{Flow}_{s,t}:=\left(Q^\text{From}_{s,t}-Q^\text{To}_{s,t}\right)/2$.

\subsection{Constrained Matrix Completion and Sample Complexity}
\label{sec:const_matrix_comp}

The under-determined state estimation problem with linear system equations can be generalized to the following low-rank matrix completion problem with $\fixes{h}$ linear equality constraints: 
\begin{subequations}
    \label{eq:min_rank_prob}
	\begin{align}
		\min_{\mathbf{X}} \quad & \text{rank}(\mathbf{X}) &  \\
    	\text{s.t.} \quad &  X_{ij}=M_{ij} & \forall (i,j)\in\Omega \\
        & \langle \mathbf{A}^{(l)}, \mathbf{X} \rangle = b^{(l)} & \forall l\in\{1,\dots,\fixes{h}\} \label{eq:min_rank_prob_const}
	\end{align}
\end{subequations}
where the matrix inner product is defined as $\langle \mathbf{A}, \mathbf{X} \rangle:=\text{trace}(\mathbf{A}^\intercal\mathbf{X})$.
An equivalent way to write the linear equality constraints is $$\sum_i\sum_jA^{(l)}_{ij}X_{ij}=b^{(l)}:\forall l\in\{1,\dots,\fixes{h}\}.$$

Let $m$ be cardinality of $\Omega$ and assume that the locations of $\mathbf{M}$ that make up $\Omega$ are sampled uniformly at random.
The question for this general constrained matrix completion problem becomes \emph{how large does $m$ need to be so that the solution to Problem \eqref{eq:min_rank_prob} is guaranteed to exactly match $\mathbf{M}$?} This value of $m$ is is referred to as \emph{sample complexity}.

\subsection{Nuclear Norm Minimization} \label{sec:nuclear}

Notice that if a matrix has rank $r$, then it also means that it has $r$ nonzero singular values.
Therefore, a simple heuristic of minimizing the sum of its singular values is used to approximate the minimization its rank~\cite{fazel2002matrix}.
This heuristic is actually the definition of the nuclear norm which is convex:
\begin{align}
    \|\mathbf{X}\|_* := \sum_{k=1}^{r}\sigma_k(\mathbf{X}) \nonumber
\end{align}
where $\sigma_k(\mathbf{X})$ is the $k$th largest singular value.
With the substitution of the nuclear norm in place of the rank operator, we reformulate the matrix completion problem \eqref{eq:min_rank_prob} to be
\begin{subequations}
    \label{eq:nuc_norm_prob}
	\begin{align}
		\min_{\mathbf{X}} \quad & \|\mathbf{X}\|_* & \\
    	\text{s.t.} \quad &  X_{ij}=M_{ij} & \forall (i,j)\in\Omega \\
        & \langle \mathbf{A}^{(l)}, \mathbf{X} \rangle = b^{(l)} & \forall l\in\{1,\dots,\fixes{h}\} \label{eq:nuc_norm_prob_const}
	\end{align}
\end{subequations}
with the same question as before on the sample complexity for $\Omega$ under uniform random sampling.

\section{Main Result on Sample Complexity}
\label{sec:approx}

In this section, we formulate an improved result on sample complexity that takes advantage of the linear equality constraints in the problem formulation.
The main challenge is on how to measure the information from the added constraints in terms of sample size which can be used to partially replace the need for extra measurements.
The intuition behind the usefulness of the added constraints \eqref{eq:min_rank_prob_const} is that each constraint may eliminate a single degree of freedom from the feasible solution set.
Thus, a set of constraints may decrease the search space for an approximation method so that less samples are needed to recover the underlying matrix $\mathbf{M}$.

\subsection{Degrees of Freedom in a Matrix} \label{sec:degrees}
Let $\mathbf{M}$ be an $n_1\times n_2$ matrix of rank $r$ which satisfies:
\begin{subequations}
	\begin{align}
		\mathbf{M} & = \sum_{k=1}^r\sigma_k\mathbf{u}_k\mathbf{v}_k^\intercal \label{eq:svd} \\
        \langle \mathbf{A}^{(l)}, \mathbf{M} \rangle & = b^{(l)} \quad \forall l\in\{1,\dots,\fixes{h}\} \label{eq:svd_const}
	\end{align}
\end{subequations}
where \eqref{eq:svd} is its Singular Value Decomposition (SVD).
Without loss of generality, we assume that $n_1\geq n_2$.
The vectors $\mathbf{u}_1,\dots,\mathbf{u}_r$ are unit vectors of size $n_1$ that are orthogonal to each other and the vectors $\mathbf{v}_1,\dots,\mathbf{v}_r$ are unit vectors of size $n_2$ that are orthogonal to each other.
The scalars $\sigma_1,\dots,\sigma_r$ which are used to linearly combine the matrices $\mathbf{u}_1\mathbf{v}_1^\intercal,\dots,\mathbf{u}_r\mathbf{v}_r^\intercal$ to be equal to $\mathbf{M}$ are called its singular values.
By convention, the singular values are listed in decreasing order so that $\sigma_k$ refers to the $k$th largest singular value in \eqref{eq:svd}.

The number of degrees of freedom of any $n_1\times n_2$ matrix of rank $r$ is $r(n_1+n_2-r)$.
This can be calculated by summing the degrees of freedom from each set individually that together make up the SVD \eqref{eq:svd}: $\{\mathbf{u}_1,\dots,\mathbf{u}_r\}$, $\{\mathbf{v}_1,\dots,\mathbf{v}_r\}$, and $\{\sigma_1,\dots,\sigma_r\}$.
The first and second sets have $r(n_1-1)-\sum_{k=1}^{r-1} k = r\left(n_1-\frac{1}{2}(r-1)\right)$ and $r\left(n_2-\frac{1}{2}(r-1)\right)$ degrees of freedom, respectively, since there are $r$ unit vectors of size $n_1$ (and $n_2$) while subtracting off the fact that the vectors within the set must be orthogonal to each other.
The last set trivially has $r$ degrees of freedom and summed all together it gives $r(n_1+n_2-r)$.

\subsection{High-Probability Exact Completion}

Due to the probabilistic nature of the question on sample complexity, the answer will also be probabilistic.
This is because for any given number of samples taken that is less than $(n_1-1)n_2$, there is some probability that the sampled locations will miss an entire row and thus have no information that can be used to recover it.
Thus, our goal will be to determine how large does $m$, the cardinality of $\Omega$, need to be to ensure a high probability of exact completion using the optimal solution to Problem \eqref{eq:nuc_norm_prob}.
Another way to frame the objective is to find the conditions on $m$ and $\mathbf{M}$ such that $\mathbf{M}$ is the unique solution to \eqref{eq:nuc_norm_prob} with some probability.

A property of the underlying matrix $\mathbf{M}$ that must be understood is how well its information is spread among its columns and rows.
A matrix with its information not well spread will require many samples.
For example, suppose there is a rank-1 matrix where $\mathbf{u}_1=[1 \ 0 \ \dots \ 0]^\intercal$ and $\mathbf{v}_1=[1 \ 0 \ \dots \ 0]^\intercal$ in \eqref{eq:svd}.
Therefore, in order to have a high probability of exact completion by any method, there must be a high probability that the location $(1,1)$ is in $\Omega$.
Otherwise, it would be impossible to have a guaranteed correct guess of the value in $(1,1)$ without having observed it.
For this reason, \cite{candes2009exact} defines a property on the space spanned by either $(\mathbf{u}_1,\dots,\mathbf{u}_r)$ or $(\mathbf{v}_1,\dots,\mathbf{v}_r)$ which measures the spread of the weight of its elements compared to the standard basis, called \emph{coherence}. 
\begin{definition}
	For any subspace $\mathcal{U}$ in $\mathbb{R}^n$ with dimension $r$, let the coherence of $\mathcal{U}$ be defined as
    \begin{align}
    	\mu(\mathcal{U}):=\frac{n}{r}\max_{i\in\{1,\dots,n\}} \left\|\mathbf{P}_\mathcal{U}\mathbf{e}_i\right\|^2 \nonumber
    \end{align}
    where $\mathbf{P}_\mathcal{U}$ is the orthogonal projection matrix onto $\mathcal{U}$ and $\mathbf{e}_i$ is the $i$-th standard basis vector with dimension $n$.
\end{definition}
The maximum possible value of $\mu(\mathcal{U})$ is $\frac{n}{r}$ when the subspace contains a standard basis vector, while its minimum possible value is 1, for example if its basis is spanned by vectors with elements that each have a magnitude of $\frac{1}{\sqrt{n}}$.
With the following assumption, the lack of spread of information within $\mathbf{M}$ can be bounded by bounding the coherence of the spaces defined by the vectors in its SVD \eqref{eq:svd}.
\begin{assumption}
    \label{ass:coherence}
	The coherence of $\mathcal{U}:=\text{span}(\mathbf{u}_1,\dots,\mathbf{u}_r)$ and the coherence of $\mathcal{V}:=\text{span}(\mathbf{v}_1,\dots,\mathbf{v}_r)$ are both upper bounded by some constant $\mu_0>0$, i.e.
    \begin{align}
    	\max\{\mu(\mathcal{U}),\mu(\mathcal{V})\}\leq \mu_0 \nonumber
    \end{align}
\end{assumption}


\fixes{To limit the concentration of information in the subgradient of the nuclear norm at $\mathbf{M}$ for any specific matrix location, an assumption is placed on the maximum value of sum of the rank-1 matrices $\mathbf{u}_1\mathbf{v}_1^\intercal,\dots,\mathbf{u}_r\mathbf{v}_r^\intercal$ through the parameter $\nu_0$.
\begin{assumption}
    \label{ass:maxEval}
	The absolute value of each element in $\sum_{k=1}^r\mathbf{u}_k\mathbf{v}_k^\intercal$ is upper bounded by $\nu_0\sqrt{r/(n_1n_2)}$ for some constant $\nu_0$.
\end{assumption}}

One important item needed in proving that $\mathbf{M}$ is the unique solution to \eqref{eq:nuc_norm_prob} is a vector space of matrices $\mathcal{T}$ that contains all $n_1\times n_2$ matrices which have a column space in $\mathcal{U}:=\text{span}(\mathbf{u}_1,\dots,\mathbf{u}_r)$, i.e. the column space of $\mathbf{M}$, and all $n_1\times n_2$ matrices which have a row space in $\mathcal{V}:=\text{span}(\mathbf{v}_1,\dots,\mathbf{v}_r)$, i.e. the row space of $\mathbf{M}$.
Specifically, a vector space $\mathcal{T}$ of matrices is built from all the combinations of $\mathbf{u}_1,\dots,\mathbf{u}_r$ that can span the column space and all the combinations of $\mathbf{v}_1,\dots,\mathbf{v}_r$ that can span the row space via their outer products with the vectors $\{\mathbf{x}_1,\dots,\mathbf{x}_r\}\in\mathbb{R}^{n_2}$ and $\{\mathbf{y}_1,\dots,\mathbf{y}_r\}\in\mathbb{R}^{n_1}$:
\begin{align}
    \mathcal{T}:=\bigg\{\sum_{k=1}^r\left(\mathbf{u}_k\mathbf{x}^\intercal_k+\mathbf{y}_k\mathbf{v}_k^\intercal\right):\mathbf{x}_k\in\mathbb{R}^{n_2},\mathbf{y}_k\in\mathbb{R}^{n_1}\bigg\} \nonumber
\end{align}
This vector space has a dimension of $r(n_1+n_2-r)$ which is equal to the degrees of freedom in any $n_1\times n_2$ matrix of rank $r$ (see Section \ref{sec:degrees}). 
Additionally, its orthogonal complement $\mathcal{T}^\perp$ will  also be important which is the vector space that contains the matrices $\mathbf{y}\mathbf{x}^\intercal$, where $\mathbf{y}$ is any vector orthogonal to the column space of $\mathbf{M}$ and $\mathbf{x}$ is any vector orthogonal to the row space of $\mathbf{M}$.

Conditions on the orthogonal projection of the nuclear norm's subgradient onto $\mathcal{T}$ and $\mathcal{T}^\perp$ will be shown later to give sufficient conditions for determining if a particular matrix is optimal to Problem \eqref{eq:nuc_norm_prob} (see Lemma \ref{thm:nuc_norm_opt}).
The orthogonal projection of $\mathbf{X}\in\mathbb{R}^{n_1\times n_2}$ onto $\mathcal{T}$ can be stated from
its projections onto $\mathcal{U}$ and $\mathcal{V}$ (see Equation (3.5) in~\cite{candes2009exact}):
\begin{align}
    \mathcal{P}_\mathcal{T}(\mathbf{X}) = \mathbf{P}_\mathcal{U}\mathbf{X} + \mathbf{X}\mathbf{P}_\mathcal{V} - \mathbf{P}_\mathcal{U}\mathbf{X}\mathbf{P}_\mathcal{V} \label{eq:PT_defn}
\end{align}
where $\mathbf{P}_\mathcal{U}$ and $\mathbf{P}_\mathcal{V}$ are the orthogonal projection matrices onto $\mathcal{U}$ and $\mathcal{V}$ respectively.
The orthogonal projection onto $\mathcal{T}^\perp$ can also be stated in the same manner:
\begin{align}
    \mathcal{P}_{\mathcal{T}^\perp}(\mathbf{X}) = (\mathbf{I}_{n_1}-\mathbf{P}_\mathcal{U})\mathbf{X}(\mathbf{I}_{n_2}-\mathbf{P}_\mathcal{V}). \nonumber
\end{align}

\fixes{To measure the amount of useful information held in the linear equalities \eqref{eq:svd_const} that can explain $\mathbf{M}$, we develop quantities similar to the upper bounds of Assumptions \ref{ass:coherence} and \ref{ass:maxEval} for the vector space spanned by $\mathbf{A}^{(1)},\dots,\mathbf{A}^{(\fixes{h})}$, denoted by $\mathcal{Q}$.
First, we measure how much of the vector space $\mathcal{T}$ remains uncovered by $\mathcal{Q}$:
\begin{subequations}
    \begin{align}
        \mu_{\mathcal{Q}^\perp} & := \frac{\sum_{i=1}^{n_1}\sum_{j=1}^{n_2}\left\|\mathcal{P}_\mathcal{T}\mathcal{P}_{\mathcal{Q}^\perp}\left(\mathbf{e}_i\mathbf{e}_j^\intercal\right)\right\|_F^2}{\sum_{i=1}^{n_1}\sum_{j=1}^{n_2}\left\|\mathcal{P}_\mathcal{T}\left(\mathbf{e}_i\mathbf{e}_j^\intercal\right)\right\|_F^2}. \label{eq:muQperp_defn}
    \end{align}
\end{subequations}
This measurement gives an element-wise average of non-coverage by $\mathcal{Q}$ in $\mathcal{T}$ which has a maximum value of 1.
Second, we measure how much of the subgradient of the nuclear norm at $\mathbf{M}$ is not contained in $\mathcal{Q}$:
\begin{subequations}
    \begin{align}
        \nu_{\mathcal{Q}^\perp} & := \frac{1}{r}\left\|\mathcal{P}_{\mathcal{Q}^\perp}\left(\mathbf{E}\right)\right\|_F^2. \label{eq:nuQperp_defn}
    \end{align}
\end{subequations}
where $\mathbf{E}:=\sum_{k=1}^r\mathbf{u}_k\mathbf{v}_k^\intercal$ and has a maximum value of 1.}

\fixes{Notice that if $\mathcal{Q}$ covers the entire space of $\mathcal{T}$ (i.e. $\mathcal{Q}\supseteq\mathcal{T}$), then $\mu_{\mathcal{Q}^\perp}=\nu_{\mathcal{Q}^\perp}=0$ since $\mathbf{E}\in\mathcal{T}$.
This extreme case will be important in explaining the significance of our main result, Theorem \ref{thm:samp_complex}, in regards to how much fewer samples are needed for exact completion.
However, this does not mean that no observations are needed because the useful information described above only refers to the information in the $r$ rank-1 matrices but does not say anything about the singular values themselves that need to be determined.}

Finally, using the above definitions and assumptions, we can state our theorem on sample complexity with a high-probability matrix completion guarantee. 

\begin{theorem}
    \label{thm:samp_complex}
    Let $\mathbf{M}$ be an $n_1 \times n_2$ matrix with $n_1 \geq n_2$ such that the following $\fixes{h}$ linear equality constraints are satisfied: $\langle \mathbf{A}^{(l)}, \mathbf{M} \rangle = b^{(l)}$ for all $l\in\{1,\dots,\fixes{h}\}$.
    Also, let $\mathbf{M}$ be of rank $r$ and have the following singular value decomposition $\sum_{k=1}^r\sigma_k\mathbf{u}_k\mathbf{v}_k^\intercal$ that satisfies Assumptions \ref{ass:coherence} and \fixes{\ref{ass:maxEval}}.
    Suppose that $m$ entries of $\mathbf{M}$ are sampled uniformly at random. Then there exists constants \fixes{$C_R$ and $C_K$ such that if
    \begin{subequations}
        \begin{align}
            m &  >  \left(C_K e^2 \nu_0 \sqrt{\beta r n_1\log n_1}2^{\frac{2}{\beta \log n_1}+\frac{5}{2}}+2\sqrt{qn_1n_2\sqrt{\nu_{\mathcal{Q}^\perp}r}}\right)^2 \nonumber \\
            & \quad - qn_1n_2, \label{eq:thm_samp_complex_a} \\
            m & > \sqrt{10\mu_0 rn_1n_2}\left(C_R \sqrt{\beta  rn_1\log n_1}+n_1n_2\sqrt{\mu_{\mathcal{Q}^\perp}q}\right) \nonumber \\
            & \quad \times (1+\sqrt{q}) - qn_1n_2, \label{eq:thm_samp_complex_b} \\
            m & \geq 2^4C_R^2\beta \mu_0 r  n_1 \log n_1 - qn_1n_2, \label{eq:thm_samp_complex_c} \\
            m & \geq 2^4\mu_{\mathcal{Q}^\perp}\mu_0qn_1^2n_2^2  - qn_1n_2, \label{eq:thm_samp_complex_d} \\
            m & \geq qn_1n_2\sqrt{\nu_{\mathcal{Q}^\perp}r} - qn_1n_2, \label{eq:thm_samp_complex_e} \\
            m & \geq \max\{2,\beta\}n_1\log n_1 \label{eq:thm_samp_complex_f}
    \end{align}
    \end{subequations}}
    for some $\beta\geq 1$ \fixes{and $q>0$}, then the solution to Problem \eqref{eq:nuc_norm_prob} is unique and equal to $\mathbf{M}$ with probability at least $1-6n_1^{-\beta}$.
\end{theorem}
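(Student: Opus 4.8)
The plan is to follow the dual-certificate strategy of Candès and Recht \cite{candes2009exact,candes2010matrix}, adapted so that the information carried by the $h$ linear constraints \eqref{eq:nuc_norm_prob_const} is placed on the same footing as the $m$ random samples. First I would invoke the optimality characterization (Lemma \ref{thm:nuc_norm_opt}): $\mathbf{M}$ is the unique minimizer of \eqref{eq:nuc_norm_prob} provided that (i) the combined observation operator, built from the random sampling projection $\mathcal{P}_\Omega$ together with the constraint projection $\mathcal{P}_\mathcal{Q}$, is injective when restricted to the tangent space $\mathcal{T}$, and (ii) there exists a dual certificate $\mathbf{Y}$ lying in the range of that combined operator whose projection onto $\mathcal{T}$ equals the subgradient $\mathbf{E}=\sum_{k=1}^r\mathbf{u}_k\mathbf{v}_k^\intercal$ while $\|\mathcal{P}_{\mathcal{T}^\perp}(\mathbf{Y})\|<1$. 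The whole argument then reduces to verifying these two conditions with high probability under uniform sampling, and each condition is responsible for a subset of the thresholds \eqref{eq:thm_samp_complex_a}--\eqref{eq:thm_samp_complex_f}.

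The first condition is a near-isometry estimate. I would show that $\mathcal{P}_\mathcal{T}$ composed with the union of $\mathcal{P}_\Omega$ and $\mathcal{P}_\mathcal{Q}$ behaves like a scaled identity on $\mathcal{T}$. The key observation is that $\mathcal{Q}$ already deterministically covers part of $\mathcal{T}$, so the random samples only have to reconstruct the residual piece in the image of $\mathcal{P}_\mathcal{T}\mathcal{P}_{\mathcal{Q}^\perp}$, whose relative Frobenius weight is exactly what $\mu_{\mathcal{Q}^\perp}$ in \eqref{eq:muQperp_defn} measures. Writing the stochastic contribution as a sum of independent rank-one terms indexed by the sampled entries and applying an operator Bernstein / matrix-Chernoff bound to the deviation $\|\mathcal{P}_\mathcal{T}(\tfrac{n_1n_2}{m}\mathcal{P}_\Omega-\mathcal{I})\mathcal{P}_\mathcal{T}\|$, the coherence bound $\mu_0$ from Assumption \ref{ass:coherence} controls the per-term spectral norm. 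Tracking how $\mu_{\mathcal{Q}^\perp}$ shrinks the effective dimension that the samples must cover yields the thresholds \eqref{eq:thm_samp_complex_b}, \eqref{eq:thm_samp_complex_c}, and \eqref{eq:thm_samp_complex_d}, and \eqref{eq:thm_samp_complex_f} guarantees that no full row or column is missed.

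Next I would construct the certificate. Following the least-squares (or golfing) construction, I would assemble $\mathbf{Y}$ from both the sampled entries and the constraint matrices, inverting the well-conditioned operator from the previous step so that $\mathcal{P}_\mathcal{T}(\mathbf{Y})=\mathbf{E}$ holds exactly. The crucial point is that the part of $\mathbf{E}$ already expressible through $\mathcal{Q}$ costs no samples; only the residual $\mathcal{P}_{\mathcal{Q}^\perp}(\mathbf{E})$, whose squared Frobenius size is $r\,\nu_{\mathcal{Q}^\perp}$ by \eqref{eq:nuQperp_defn}, must be supplied by the random part. Bounding $\|\mathcal{P}_{\mathcal{T}^\perp}(\mathbf{Y})\|<1$ by a Bernstein-type tail bound on the resulting random matrix series, with Assumption \ref{ass:maxEval} (through $\nu_0$) controlling the entrywise magnitude of the summands, produces the remaining thresholds \eqref{eq:thm_samp_complex_a} and \eqref{eq:thm_samp_complex_e}. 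A union bound over the at most six failure events gives the stated probability $6n_1^{-\beta}$, and the free parameter $q>0$ calibrates how strongly the constraint directions are weighted against the samples, entering each bound as the credit $qn_1n_2$ subtracted on the right-hand side.

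The main obstacle is the bookkeeping that cleanly separates the deterministic contribution of the constraints from the stochastic contribution of the samples. In the unconstrained proof both the near-isometry and the certificate are built purely from $\mathcal{P}_\Omega$; here I must show that $\mathcal{P}_\Omega$ only needs to recover the components of $\mathcal{T}$ and of $\mathbf{E}$ orthogonal to $\mathcal{Q}$, and then prove that the concentration inequalities still apply to this projected, interdependent family of summands. Making the constants $\mu_{\mathcal{Q}^\perp}$ and $\nu_{\mathcal{Q}^\perp}$ enter the tail bounds multiplicatively---so that the extreme case $\mathcal{Q}\supseteq\mathcal{T}$ collapses the requirement to the classical rate minus the constraint credit $qn_1n_2$---is the delicate step, since the projected summands are no longer independent in the naive sense and their variance proxy must be re-estimated under the projection $\mathcal{P}_{\mathcal{Q}^\perp}$.
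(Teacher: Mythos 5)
Your overall strategy---dual certificate via Lemma \ref{thm:nuc_norm_opt}, a least-squares certificate construction, and a near-isometry estimate for the combined sampling/constraint operator---is the same as the paper's, but two concrete gaps remain. The first is independence: you apply an operator Bernstein bound to ``a sum of independent rank-one terms indexed by the sampled entries'' under uniform sampling, but uniform sampling without replacement makes those indicators dependent, so no such bound applies directly. The paper's very first move is to replace uniform sampling by the Bernoulli model \eqref{eq:bernoulli}, prove everything there (where Talagrand's inequality, Lemma \ref{thm:bound_supremum}, and the noncommutative Khintchine inequality, Lemma \ref{thm:khintchine}, legitimately apply to the independent $\delta_{ij}$), and only then invoke the argument of \cite{candes2006robust} that the uniform-model failure probability is at most twice the Bernoulli-model one. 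That factor of $2$ times the $3n_1^{-\beta}$ failure bound of the lemmas is where $1-6n_1^{-\beta}$ comes from; your ``union bound over the at most six failure events'' is not the actual mechanism and would not produce this constant.

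The second, more serious gap is that the step you yourself flag as delicate---making $\mu_{\mathcal{Q}^\perp}$ and $\nu_{\mathcal{Q}^\perp}$ enter the tail bounds---is left unresolved, and the route you sketch (have $\mathcal{P}_\Omega$ recover only the $\mathcal{P}_{\mathcal{Q}^\perp}$-residual of $\mathcal{T}$ and of $\mathbf{E}$, then re-prove concentration for summands projected by $\mathcal{P}_{\mathcal{Q}^\perp}$) is precisely the part you concede you cannot control, and it is not how the theorem is actually proved. The paper needs no new concentration result: it builds the certificate from the weighted least-squares problem \eqref{eq:candidateY_prob}, whose KKT conditions give $\mathbf{Y}=(\mathcal{P}_\Omega+q\mathcal{P}_\mathcal{Q})\mathcal{P}_\mathcal{T}\left(\mathcal{P}_\mathcal{T}(\mathcal{P}_\Omega+q\mathcal{P}_\mathcal{Q})\mathcal{P}_\mathcal{T}\right)^{-1}(\mathbf{E})$, expands the inverse in a Neumann series in the operator $\mathcal{H}$ of \eqref{eq:H_defn}, and then in each lemma splits off the constraint contribution by the triangle inequality: the random part is literally the same quantity as in the unconstrained Cand\`es--Recht proof, while the constraint parts $q\|\mathcal{P}_\mathcal{T}\mathcal{P}_{\mathcal{Q}^\perp}\mathcal{P}_\mathcal{T}\|$ and $q\|\mathcal{P}_{\mathcal{Q}^\perp}(\mathbf{E})\|$ are bounded \emph{deterministically}, straight from the definitions \eqref{eq:muQperp_defn} and \eqref{eq:nuQperp_defn}. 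This is exactly why the tunable weight $q$ appears inside the operator and in every threshold, and it also corrects your bookkeeping: \eqref{eq:thm_samp_complex_b} comes from the Neumann-series tail (Lemma \ref{thm:bound_Hsum} with $k_0=1$), not from the injectivity step, while \eqref{eq:thm_samp_complex_e} and \eqref{eq:thm_samp_complex_f} are hypotheses of the $k=0$ certificate bound (Lemma \ref{thm:bound_H0}). Without either completing your projected-summand concentration argument or adopting this triangle-inequality device, the proposal does not establish the theorem.
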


The proof is given in detail in the next subsection.

\begin{corollary}
    \label{thm:neighborhood1}
    \fixes{If $\mu_{\mathcal{Q}^\perp} < \min\left\{\frac{1}{2^4 },\frac{1}{10r}\right\}\frac{1}{\mu_0n_1n_2}$, $\nu_{\mathcal{Q}^\perp}<\frac{1}{2^4 r}$, and
    \begin{align}
        m & \geq \max\{2,\beta\}n_1\log n_1
    \end{align}
    for some $\beta\geq 1$, then the solution to Problem \eqref{eq:nuc_norm_prob} is unique and equal to $\mathbf{M}$ with probability at least $1-6n_1^{-\beta}$.}
\end{corollary}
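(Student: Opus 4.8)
The plan is to derive the corollary directly from Theorem \ref{thm:samp_complex} by exploiting the freedom in the parameter $q>0$, which appears in conditions \eqref{eq:thm_samp_complex_a}--\eqref{eq:thm_samp_complex_e} but not in the conclusion. The key observation is that under the stated bounds on $\mu_{\mathcal{Q}^\perp}$ and $\nu_{\mathcal{Q}^\perp}$, the right-hand side of each of the first five conditions, viewed as a function of $q$, is dominated by the correction term $-qn_1n_2$ and therefore tends to $-\infty$ as $q\to\infty$. Consequently, choosing $q$ large enough forces every one of those right-hand sides below $\max\{2,\beta\}n_1\log n_1\leq m$, so that the hypothesis $m\geq\max\{2,\beta\}n_1\log n_1$ alone suffices to meet all six conditions of the theorem.

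Concretely, I would treat the conditions one at a time. For \eqref{eq:thm_samp_complex_c} the first term is independent of $q$, so the $-qn_1n_2$ correction dominates once $q$ is large. For \eqref{eq:thm_samp_complex_d} the hypothesis $\mu_{\mathcal{Q}^\perp}<\tfrac{1}{2^4\mu_0n_1n_2}$ already makes the leading coefficient $2^4\mu_{\mathcal{Q}^\perp}\mu_0n_1n_2-1$ negative, so the right-hand side is negative for all $q>0$. For \eqref{eq:thm_samp_complex_e}, the bound $\nu_{\mathcal{Q}^\perp}<\tfrac{1}{2^4 r}<\tfrac{1}{r}$ gives $\sqrt{\nu_{\mathcal{Q}^\perp}r}<1$, so the right-hand side $qn_1n_2\bigl(\sqrt{\nu_{\mathcal{Q}^\perp}r}-1\bigr)$ is negative for every $q>0$.

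The more involved cases are \eqref{eq:thm_samp_complex_a} and \eqref{eq:thm_samp_complex_b}, which I expect to be the main obstacle because their right-hand sides contain a squared sum and a product of two factors, respectively, producing cross-terms in $\sqrt{q}$ that must be controlled. For \eqref{eq:thm_samp_complex_a} I would expand the square: the $q$-independent piece contributes a constant, the cross-term is $O(\sqrt{q})$, and the quadratic part contributes $4qn_1n_2\sqrt{\nu_{\mathcal{Q}^\perp}r}$. Combined with the $-qn_1n_2$ correction, the leading behavior is $qn_1n_2\bigl(4\sqrt{\nu_{\mathcal{Q}^\perp}r}-1\bigr)$, and the hypothesis $\nu_{\mathcal{Q}^\perp}<\tfrac{1}{2^4 r}$ is exactly what makes $4\sqrt{\nu_{\mathcal{Q}^\perp}r}<1$, so this coefficient is negative and the right-hand side diverges to $-\infty$. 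Similarly, for \eqref{eq:thm_samp_complex_b} I would expand $(c+d)(1+\sqrt q)$ with $c=C_R\sqrt{\beta rn_1\log n_1}$ and $d=n_1n_2\sqrt{\mu_{\mathcal{Q}^\perp}q}$; the dominant term is $\sqrt{10\mu_0rn_1n_2}\cdot n_1n_2\sqrt{\mu_{\mathcal{Q}^\perp}}\,q$, so the leading behavior is $qn_1n_2\bigl(\sqrt{10\mu_0rn_1n_2\mu_{\mathcal{Q}^\perp}}-1\bigr)$, and the hypothesis $\mu_{\mathcal{Q}^\perp}<\tfrac{1}{10r\mu_0n_1n_2}$ makes this coefficient negative.

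Finally, since every right-hand side of \eqref{eq:thm_samp_complex_a}--\eqref{eq:thm_samp_complex_e} tends to $-\infty$ in $q$, there is a finite $q^\star>0$ beyond which all five are strictly below $\max\{2,\beta\}n_1\log n_1$; fixing any $q\geq q^\star$ and invoking the hypothesis $m\geq\max\{2,\beta\}n_1\log n_1$ verifies \eqref{eq:thm_samp_complex_a}--\eqref{eq:thm_samp_complex_f} simultaneously. Theorem \ref{thm:samp_complex} then yields that $\mathbf{M}$ is the unique solution of Problem \eqref{eq:nuc_norm_prob} with probability at least $1-6n_1^{-\beta}$, which is the claimed conclusion.
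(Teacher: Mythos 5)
Your proof is correct and takes essentially the same route as the paper's: both exploit the free parameter $q$ in Theorem \ref{thm:samp_complex}, observe that the hypotheses on $\mu_{\mathcal{Q}^\perp}$ and $\nu_{\mathcal{Q}^\perp}$ make the leading $q$-coefficient in each of \eqref{eq:thm_samp_complex_a}--\eqref{eq:thm_samp_complex_e} negative (the paper's $\epsilon_1,\epsilon_2<1$ are exactly your conditions $4\sqrt{\nu_{\mathcal{Q}^\perp}r}<1$ and $\sqrt{10\mu_0 rn_1n_2\mu_{\mathcal{Q}^\perp}}<1$, and it likewise notes \eqref{eq:thm_samp_complex_d}, \eqref{eq:thm_samp_complex_e} are negative for all $q>0$), and then take $q$ large enough that only \eqref{eq:thm_samp_complex_f} survives. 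The paper merely makes your limiting argument explicit by factoring and completing the square to exhibit concrete thresholds for $q$.
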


\fixes{See Appendix \ref{sec:neighborhood1} for the proof.}

\begin{corollary}
    \label{thm:Qperp_equal1}
    \fixes{If $\mu_{\mathcal{Q}^\perp} = \nu_{\mathcal{Q}^\perp} =1$, and
    \begin{align}
        m & > \max\left\{C_1\nu_0^2\beta r,C_2\sqrt{\mu_0n_2}\beta r,C_3 \mu_0\beta r,2,\beta\right\} n_1\log n_1
    \end{align}
    for some $\beta\geq 1$ where $C_1:=32C_K^2e^42^{\frac{4}{\beta \log n_1}}$, $C_2:=\sqrt{10}C_R$, $C_3:=16C_R^2$ then the solution to Problem \eqref{eq:nuc_norm_prob} is unique and equal to $\mathbf{M}$ with probability at least $1-6n_1^{-\beta}$.}
\end{corollary}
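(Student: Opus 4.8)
The plan is to derive this corollary directly from Theorem \ref{thm:samp_complex} by substituting the extreme values $\mu_{\mathcal{Q}^\perp} = \nu_{\mathcal{Q}^\perp} = 1$ into the six sufficient conditions \eqref{eq:thm_samp_complex_a}--\eqref{eq:thm_samp_complex_f} and then eliminating the free parameter $q$ by sending it to $0^+$. These extreme values correspond to the case where the constraint space $\mathcal{Q}$ carries no usable information, so one expects to recover an unconstrained-type bound; the only remaining freedom is $q>0$, which I intend to drive toward zero while keeping it strictly positive.

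First I would substitute the values and isolate the two conditions that involve $q$ multiplicatively. After the substitution, \eqref{eq:thm_samp_complex_d} becomes $m \geq q n_1 n_2(2^4\mu_0 n_1 n_2 - 1)$ and \eqref{eq:thm_samp_complex_e} becomes $m \geq q n_1 n_2(\sqrt{r}-1)$; both right-hand sides are $O(q)$ and vanish as $q\to 0^+$, so for any fixed $m\geq 1$ they hold once $q$ is small enough. The right-hand sides of the remaining conditions are continuous in $q$, so it suffices to control their $q\to 0^+$ limits with the corollary's single bound.

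Next I would carry out the algebraic simplification of the limiting right-hand sides. In \eqref{eq:thm_samp_complex_a} the term $2\sqrt{q n_1 n_2 \sqrt{\nu_{\mathcal{Q}^\perp} r}}$ is $O(\sqrt q)$ and drops out, leaving $\big(C_K e^2 \nu_0 \sqrt{\beta r n_1 \log n_1}\,2^{\frac{2}{\beta \log n_1}+\frac{5}{2}}\big)^2$; squaring the exponential through $2^{\frac{4}{\beta \log n_1}+5} = 32\cdot 2^{\frac{4}{\beta \log n_1}}$ yields exactly $C_1 \nu_0^2 \beta r n_1 \log n_1$ with $C_1 = 32 C_K^2 e^4 2^{\frac{4}{\beta \log n_1}}$. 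Condition \eqref{eq:thm_samp_complex_c} collapses to $16 C_R^2 \mu_0 \beta r n_1 \log n_1 = C_3 \mu_0 \beta r n_1 \log n_1$ with $C_3 = 16 C_R^2$, and since it is decreasing in $q$ it holds for every $q>0$. For \eqref{eq:thm_samp_complex_b}, the $q\to 0^+$ limit is $\sqrt{10\mu_0 r n_1 n_2}\,C_R\sqrt{\beta r n_1 \log n_1} = C_R\, r n_1 \sqrt{10\mu_0 n_2 \beta \log n_1}$; applying $\beta\geq 1$ and $\log n_1 \geq 1$ to replace $\sqrt{\beta}\sqrt{\log n_1}$ by $\beta \log n_1$ gives the clean upper bound $C_2 \sqrt{\mu_0 n_2}\,\beta r n_1 \log n_1$ with $C_2 = \sqrt{10} C_R$. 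Each of these three limits, together with the untouched condition \eqref{eq:thm_samp_complex_f}, is dominated by the matching term inside the maximum of the corollary, so the stated bound on $m$ implies all four strictly.

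Finally I would invoke continuity to close the argument: because the corollary's bound is strict and overpowers each of the limiting right-hand sides with room to spare, there exists a sufficiently small $q>0$ at which all six conditions of Theorem \ref{thm:samp_complex} hold simultaneously; applying the theorem at that $q$ then gives uniqueness and exact recovery of $\mathbf{M}$ with probability at least $1-6n_1^{-\beta}$. The step I expect to require the most care is precisely this $q\to 0^+$ passage, since $q$ must remain strictly positive and cannot simply be set to zero — I must instead combine the continuity of the right-hand sides of \eqref{eq:thm_samp_complex_a} and \eqref{eq:thm_samp_complex_b} in $q$ with the $O(q)$ decay in \eqref{eq:thm_samp_complex_d} and \eqref{eq:thm_samp_complex_e} to exhibit one admissible value; the comparison $\sqrt{\beta \log n_1}\leq \beta \log n_1$ used to obtain $C_2$ also quietly requires $n_1\geq 3$.
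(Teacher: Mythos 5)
Your proposal is correct and follows exactly the paper's own route: the paper's proof is the one-line observation that the corollary follows by taking $q \to 0^+$ in conditions \eqref{eq:thm_samp_complex_a}--\eqref{eq:thm_samp_complex_e} of Theorem \ref{thm:samp_complex}, which is precisely your substitution-and-limit argument. You in fact supply details the paper glosses over --- the continuity argument exhibiting an admissible strictly positive $q$, the $O(q)$ decay of \eqref{eq:thm_samp_complex_d}--\eqref{eq:thm_samp_complex_e}, and the caveat that $\sqrt{\beta\log n_1}\leq \beta\log n_1$ needs $\beta\log n_1\geq 1$ --- all of which are sound.
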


\begin{proof}
    \fixes{The resultant comes directly by taking the limit of \eqref{eq:thm_samp_complex_a} - \eqref{eq:thm_samp_complex_e} with $q$ approaching zero from the right in Theorem \ref{thm:samp_complex}.}
\end{proof}

\begin{remark}
    \fixes{The sample complexity described in Corollary \ref{thm:Qperp_equal1} is within $O(\max\{\mu_0^{-\frac{1}{2}}n_2^{\frac{1}{4}},\mu_0^{-\frac{1}{2}}\nu_0\})$ of \cite{candes2009exact} for the unconstrained problem.}
\end{remark}

\fixes{Corollary \ref{thm:neighborhood1} shows us that when $\mu_{\mathcal{Q}^\perp}$ and $\nu_{\mathcal{Q}^\perp}$ are close to 0, i.e. $\mathcal{Q}$ almost completely covers $\mathcal{T}$, then the reduction in sample complexity is on the order of $n_1\log n_1$.
On the other hand, from Corollary \ref{thm:Qperp_equal1} when $\mu_{\mathcal{Q}^\perp}=\nu_{\mathcal{Q}^\perp}=1$, i.e. $\mathcal{Q}$ does not cover any of $\mathcal{T}$, there is no reduction.
Therefore, this is a preliminary indication that $\mu_{\mathcal{Q}\perp}$ and $\nu_{\mathcal{Q}\perp}$ are useful metrics to characterize the sample complexity reduction from constraints.
For the cases between the conditions of Corollary \ref{thm:neighborhood1} and \ref{thm:Qperp_equal1}, the parameter $q>0$ can be tuned to minimize the active inequalities of \eqref{eq:thm_samp_complex_a} - \eqref{eq:thm_samp_complex_e} in Theorem \ref{thm:samp_complex}.}

When connecting this theoretical result back to the state estimation problem for a distribution network,
\fixes{it shows us that states which result in a $\mathcal{T}$ that lies in the constraint vector space $\mathcal{Q}$ require significantly less samples for estimation than those that are not.}

\subsection{Proof of Theorem \ref{thm:samp_complex}}

A challenge with the uniform random sampling model is that the probability that an element will be sampled depends on which ones have already been sampled.
Instead of proving Theorem \ref{thm:samp_complex} directly with uniform random sampling, we prove it with the following Bernoulli sampling model:
\begin{subequations}
    \label{eq:bernoulli}
    \begin{align}
        \mathbb{P}(\delta_{ij}=1) & = p := \frac{m}{n_1n_2} \\
        \Omega' & := \left\{(i,j):\delta_{ij}=1\right\}.
    \end{align}
\end{subequations}
since the probability an element will be sampled is independent of whether any other element has been sampled or not.
Then we can invoke the result from \cite{candes2006robust}, Section II.C, which states that the probability of failure with the uniform sampling model can be bounded by twice that of the Bernoulli sampling model.

First, we have a lemma which states sufficient optimality conditions for Problem \eqref{eq:nuc_norm_prob} which are later shown in the proof to be satisfied by $\mathbf{M}$ under certain probabilistic conditions.
The two lemmas following the first are used in the proof to define the conditions which $\mathbf{M}$ satisfies Lemma \ref{thm:nuc_norm_opt}.
Let $\mathcal{P}_\Omega(\mathbf{X})$ be the projection operator onto $\Omega$ where projection's element equals $X_{ij}$ if $(i,j)\in\Omega$ or 0 otherwise.
Also, let the $\mathcal{R}_\Omega(\mathbf{X})$ be the sampling operator which maps the elements of $\mathbf{X}$ to a vector of size $m$ for only the element locations that are in $\Omega$.

\begin{lemma}
    \label{thm:nuc_norm_opt}
    Suppose that there exists some matrix $\mathbf{X}_0=\sum_{k=1}^{r}\sigma_k\mathbf{u}_k\mathbf{v}_k^\intercal$ of rank $r$ that is feasible to Problem \eqref{eq:nuc_norm_prob}.
    If it satisfies the two following conditions:
    \begin{enumerate}
        \item There exists a dual point $(\boldsymbol{\lambda},\boldsymbol{\gamma})$ where $\mathbf{Y}=\mathcal{R}_\Omega^\intercal\boldsymbol{\lambda}+\sum_{l=1}^{\fixes{h}} \gamma_l\mathbf{A}^{(l)}$ such that
        \begin{subequations}
            \begin{align}
                \mathcal{P}_\mathcal{T}(\mathbf{Y}) & = \sum_{k=1}^{r}\mathbf{u}_k\mathbf{v}_k^\intercal \\
                \left\|\mathcal{P}_{\mathcal{T}^\perp}(\mathbf{Y})\right\| & < 1
            \end{align}
        \end{subequations}
        \item The sampling operator $\mathcal{R}_\Omega$ restricted to the elements of $\mathcal{T}$ is injective.
    \end{enumerate}
    then $\mathbf{X}_0$ is the unique solution to Problem \eqref{eq:nuc_norm_prob}.
\end{lemma}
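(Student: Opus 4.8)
The plan is to run the standard dual-certificate argument for nuclear-norm minimization (as in \cite{candes2009exact}), augmented to account for the extra linear equality constraints. Fix an arbitrary nonzero perturbation $\mathbf{H}$ such that $\mathbf{X}_0+\mathbf{H}$ is also feasible, and aim to show $\|\mathbf{X}_0+\mathbf{H}\|_* > \|\mathbf{X}_0\|_*$. Feasibility of both $\mathbf{X}_0$ and $\mathbf{X}_0+\mathbf{H}$ forces $\mathbf{H}$ to vanish on the sampled entries and on every constraint, i.e. $\mathcal{R}_\Omega(\mathbf{H})=\mathbf{0}$ and $\langle \mathbf{A}^{(l)},\mathbf{H}\rangle=0$ for all $l\in\{1,\dots,\fixes{h}\}$.

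Next I would invoke the subdifferential of the nuclear norm at the rank-$r$ matrix $\mathbf{X}_0$: every matrix of the form $\mathbf{E}+\mathbf{W}$ with $\mathbf{E}=\sum_{k=1}^r\mathbf{u}_k\mathbf{v}_k^\intercal$, $\mathbf{W}\in\mathcal{T}^\perp$, and $\|\mathbf{W}\|\leq 1$ is a subgradient. Choosing $\mathbf{W}\in\mathcal{T}^\perp$ to share the singular vectors of $\mathcal{P}_{\mathcal{T}^\perp}(\mathbf{H})$, so that $\langle \mathbf{W},\mathbf{H}\rangle=\|\mathcal{P}_{\mathcal{T}^\perp}(\mathbf{H})\|_*$ by spectral/nuclear-norm duality, the convexity inequality for the subgradient yields
\[
\|\mathbf{X}_0+\mathbf{H}\|_* \geq \|\mathbf{X}_0\|_* + \langle \mathbf{E},\mathbf{H}\rangle + \|\mathcal{P}_{\mathcal{T}^\perp}(\mathbf{H})\|_*.
\]

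I would then use the certificate $\mathbf{Y}=\mathcal{R}_\Omega^\intercal\boldsymbol{\lambda}+\sum_{l=1}^{\fixes{h}}\gamma_l\mathbf{A}^{(l)}$ to control the cross term $\langle \mathbf{E},\mathbf{H}\rangle$. Because $\mathcal{R}_\Omega(\mathbf{H})=\mathbf{0}$ and $\langle \mathbf{A}^{(l)},\mathbf{H}\rangle=0$, both components of $\mathbf{Y}$ are annihilated by $\mathbf{H}$, so $\langle \mathbf{Y},\mathbf{H}\rangle=0$. Splitting $\mathbf{Y}$ through the self-adjoint projections $\mathcal{P}_\mathcal{T},\mathcal{P}_{\mathcal{T}^\perp}$ and using Condition~1, namely $\mathcal{P}_\mathcal{T}(\mathbf{Y})=\mathbf{E}$, gives $\langle \mathbf{E},\mathbf{H}\rangle=-\langle \mathcal{P}_{\mathcal{T}^\perp}(\mathbf{Y}),\mathcal{P}_{\mathcal{T}^\perp}(\mathbf{H})\rangle$, which I bound by $\|\mathcal{P}_{\mathcal{T}^\perp}(\mathbf{Y})\|\,\|\mathcal{P}_{\mathcal{T}^\perp}(\mathbf{H})\|_*$ via Hölder for Schatten norms. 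Substituting,
\[
\|\mathbf{X}_0+\mathbf{H}\|_* \geq \|\mathbf{X}_0\|_* + \left(1-\|\mathcal{P}_{\mathcal{T}^\perp}(\mathbf{Y})\|\right)\|\mathcal{P}_{\mathcal{T}^\perp}(\mathbf{H})\|_*,
\]
and since $\|\mathcal{P}_{\mathcal{T}^\perp}(\mathbf{Y})\|<1$ the coefficient is strictly positive. Hence equality can hold only if $\mathcal{P}_{\mathcal{T}^\perp}(\mathbf{H})=\mathbf{0}$, i.e. $\mathbf{H}\in\mathcal{T}$; but a nonzero $\mathbf{H}\in\mathcal{T}$ with $\mathcal{R}_\Omega(\mathbf{H})=\mathbf{0}$ is ruled out by the injectivity in Condition~2, forcing $\mathbf{H}=\mathbf{0}$ and establishing uniqueness.

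I expect the only delicate points to be the two standard-but-fiddly facts feeding the inequalities: the existence of a maximizer $\mathbf{W}$ that simultaneously lies in $\mathcal{T}^\perp$ and attains $\langle \mathbf{W},\mathbf{H}\rangle=\|\mathcal{P}_{\mathcal{T}^\perp}(\mathbf{H})\|_*$ (which holds because $\mathcal{P}_{\mathcal{T}^\perp}(\mathbf{H})$ already has column and row spaces orthogonal to $\mathcal{U}$ and $\mathcal{V}$, so its left/right singular vectors generate an admissible $\mathbf{W}\in\mathcal{T}^\perp$), and the bookkeeping that both the sampling term $\mathcal{R}_\Omega^\intercal\boldsymbol{\lambda}$ and the constraint term $\sum_l\gamma_l\mathbf{A}^{(l)}$ of $\mathbf{Y}$ are annihilated by $\mathbf{H}$. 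The latter is precisely where the added constraints enter: permitting $\mathbf{Y}$ to carry a component in $\mathcal{Q}$ enlarges the family of admissible certificates while still preserving $\langle \mathbf{Y},\mathbf{H}\rangle=0$, which is exactly the freedom the subsequent probabilistic construction exploits to lower the sample complexity.
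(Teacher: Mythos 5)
Your proof is correct and follows essentially the same dual-certificate argument as the paper: feasibility of the perturbation forces $\mathcal{R}_\Omega(\mathbf{H})=\mathbf{0}$ and $\langle\mathbf{A}^{(l)},\mathbf{H}\rangle=0$, a well-chosen subgradient $\mathbf{E}+\mathbf{W}$ together with spectral/nuclear duality and $\langle\mathbf{Y},\mathbf{H}\rangle=0$ yields $\|\mathbf{X}_0+\mathbf{H}\|_*\geq\|\mathbf{X}_0\|_*+\bigl(1-\|\mathcal{P}_{\mathcal{T}^\perp}(\mathbf{Y})\|\bigr)\|\mathcal{P}_{\mathcal{T}^\perp}(\mathbf{H})\|_*$, and injectivity on $\mathcal{T}$ forces $\mathbf{H}=\mathbf{0}$. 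The only difference is organizational: the paper decomposes an abstract subgradient as $\mathbf{Y}^0=\mathbf{W}^0-\mathbf{W}+\mathbf{Y}$, whereas you route the cross term $\langle\mathbf{E},\mathbf{H}\rangle$ through the certificate directly, which is the same computation rearranged.
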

See Appendix \ref{sec:proof_nuc_norm_opt} for the proof.


\begin{lemma}
    \label{thm:inverse_condition}
    \fixes{Suppose that if $\Omega$ is sampled according to the Bernoulli model \eqref{eq:bernoulli}, $n_1\geq n_2$, Assumptions \ref{ass:coherence} and \ref{ass:maxEval} are satisfied,
    \begin{align}
        C_R\sqrt{\frac{\beta r \mu_0 n_1 \log n_1}{m+qn_1n_2}} \leq \frac{1}{4}, \quad \text{and} \quad \sqrt{\frac{\mu_{\mathcal{Q}^\perp}\mu_0qn_1^2n_2^2}{m+qn_1n_2}} \leq \frac{1}{4}, \nonumber
    \end{align}
    then the following inequalities are true
    \begin{subequations}
        \begin{align}
            \|\mathcal{P}_\mathcal{T}(\mathcal{P}_\Omega+q\mathcal{P}_\mathcal{Q})\mathcal{P}_\mathcal{T}(\mathbf{X})\|_F \geq \frac{1}{2}(p+q)\|\mathcal{P}_\mathcal{T}(\mathbf{X})\|_F \label{eq:inverse_condition_a} \\
            \|\mathcal{P}_\mathcal{T}(\mathcal{P}_\Omega+q\mathcal{P}_\mathcal{Q})\mathcal{P}_\mathcal{T}(\mathbf{X})\|_F \leq \frac{3}{2}(p+q)\|\mathcal{P}_\mathcal{T}(\mathbf{X})\|_F \label{eq:inverse_condition_b} \\
            \|(\mathcal{P}_\Omega+q\mathcal{P}_\mathcal{Q})\mathcal{P}_\mathcal{T}(\mathbf{X})\|_F \leq \frac{1+\sqrt{q}}{2}\sqrt{5(p+q)}\|\mathcal{P}_\mathcal{T}(\mathbf{X})\|_F \label{eq:inverse_condition_c}
        \end{align}
        with probability at least $1-3n_1^{-\beta}$.
    \end{subequations}}
\end{lemma}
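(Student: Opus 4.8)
The plan is to control the self-adjoint operator $\mathcal{P}_\mathcal{T}(\mathcal{P}_\Omega+q\mathcal{P}_\mathcal{Q})\mathcal{P}_\mathcal{T}$ acting on $n_1\times n_2$ matrices and show it is a small spectral-norm perturbation of $(p+q)\mathcal{P}_\mathcal{T}$; the three claimed inequalities then follow from routine norm manipulations. First I would compute the expectation. Under the Bernoulli model \eqref{eq:bernoulli} each entry is retained independently with probability $p$, so $\mathbb{E}[\mathcal{P}_\Omega]=p\,\mathcal{I}$ (the identity operator on matrices) and hence $\mathbb{E}\big[\mathcal{P}_\mathcal{T}(\mathcal{P}_\Omega+q\mathcal{P}_\mathcal{Q})\mathcal{P}_\mathcal{T}\big]=p\mathcal{P}_\mathcal{T}+q\mathcal{P}_\mathcal{T}\mathcal{P}_\mathcal{Q}\mathcal{P}_\mathcal{T}$. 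Writing the deviation from $(p+q)\mathcal{P}_\mathcal{T}$ as $\mathcal{E}=\mathcal{E}_1+\mathcal{E}_2$ with random part $\mathcal{E}_1:=\mathcal{P}_\mathcal{T}\mathcal{P}_\Omega\mathcal{P}_\mathcal{T}-p\mathcal{P}_\mathcal{T}$ and deterministic part $\mathcal{E}_2:=-q\,\mathcal{P}_\mathcal{T}\mathcal{P}_{\mathcal{Q}^\perp}\mathcal{P}_\mathcal{T}$ (using $\mathcal{P}_\mathcal{Q}=\mathcal{I}-\mathcal{P}_{\mathcal{Q}^\perp}$), it suffices to prove $\|\mathcal{E}_1\|\le\tfrac14(p+q)$ and $\|\mathcal{E}_2\|\le\tfrac14(p+q)$, so that $\|\mathcal{E}\|\le\tfrac12(p+q)$.

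Second, I would dispatch the deterministic term directly from the definition of $\mu_{\mathcal{Q}^\perp}$. Because $\mathcal{P}_\mathcal{T}\mathcal{P}_{\mathcal{Q}^\perp}\mathcal{P}_\mathcal{T}$ is positive semidefinite, its spectral norm is at most its trace, and a short computation shows that the numerator of \eqref{eq:muQperp_defn} equals $\text{tr}(\mathcal{P}_\mathcal{T}\mathcal{P}_{\mathcal{Q}^\perp}\mathcal{P}_\mathcal{T})$ while the denominator equals $\dim\mathcal{T}$, so $\text{tr}(\mathcal{P}_\mathcal{T}\mathcal{P}_{\mathcal{Q}^\perp}\mathcal{P}_\mathcal{T})=\mu_{\mathcal{Q}^\perp}\dim\mathcal{T}$. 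Bounding $\dim\mathcal{T}=r(n_1+n_2-r)\le 2rn_1\le 4\mu_0 n_1 n_2$ (using $r\le n_2$ and $\mu_0\ge 1$) gives $\|\mathcal{E}_2\|\le 4\mu_{\mathcal{Q}^\perp}\mu_0 n_1 n_2\,q$, and the second hypothesis of the lemma, squared, reads $q\mu_{\mathcal{Q}^\perp}\mu_0 n_1 n_2\le\tfrac1{16}(p+q)$, which yields $\|\mathcal{E}_2\|\le\tfrac14(p+q)$.

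Third, the random term is the crux, and I expect it to be the \emph{main obstacle}. I would write $\mathcal{E}_1=\sum_{i,j}(\delta_{ij}-p)\,\big\langle\mathcal{P}_\mathcal{T}(\mathbf{e}_i\mathbf{e}_j^\intercal),\cdot\big\rangle\,\mathcal{P}_\mathcal{T}(\mathbf{e}_i\mathbf{e}_j^\intercal)$ as a sum of independent, zero-mean, self-adjoint operators and apply a noncommutative (matrix) Bernstein inequality as in \cite{candes2009exact}. Assumption \ref{ass:coherence} supplies the per-term bound $\|\mathcal{P}_\mathcal{T}(\mathbf{e}_i\mathbf{e}_j^\intercal)\|_F^2\le\|\mathbf{P}_\mathcal{U}\mathbf{e}_i\|^2+\|\mathbf{P}_\mathcal{V}\mathbf{e}_j\|^2\le 2\mu_0 r/n_2$, which controls both the maximal summand norm and the variance proxy $\sigma^2\le 2p\mu_0 r/n_2$. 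The Bernstein bound produces a deviation of order $\sqrt{\mu_0 r\beta(\log n_1)/n_2}\,\sqrt{p+q}$ plus a lower-order term, and the first hypothesis $C_R\sqrt{\beta r\mu_0 n_1\log n_1/(m+qn_1n_2)}\le\tfrac14$ is exactly the statement (after cancelling $n_1$ and using $m+qn_1n_2=n_1n_2(p+q)$) that this deviation is at most $\tfrac14(p+q)$, holding with probability at least $1-3n_1^{-\beta}$. The delicate part is the Bernstein bookkeeping: matching the variance and maximal-norm terms to the hypothesis and getting the tail probability into the clean $n_1^{-\beta}$ form, as carried out in \cite{candes2009exact}.

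Finally, I would deduce the three inequalities. Fix $\mathbf{X}$ and set $\mathbf{W}:=\mathcal{P}_\mathcal{T}(\mathbf{X})\in\mathcal{T}$. Since $\mathcal{P}_\mathcal{T}(\mathcal{P}_\Omega+q\mathcal{P}_\mathcal{Q})\mathcal{P}_\mathcal{T}\mathbf{X}=(p+q)\mathbf{W}+\mathcal{E}\mathbf{W}$ and $\|\mathcal{E}\|\le\tfrac12(p+q)$, the reverse and forward triangle inequalities give \eqref{eq:inverse_condition_a} and \eqref{eq:inverse_condition_b} immediately. For \eqref{eq:inverse_condition_c} I would use $\|(\mathcal{P}_\Omega+q\mathcal{P}_\mathcal{Q})\mathbf{W}\|_F\le\|\mathcal{P}_\Omega\mathbf{W}\|_F+q\|\mathcal{P}_\mathcal{Q}\mathbf{W}\|_F$. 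The first summand obeys $\|\mathcal{P}_\Omega\mathbf{W}\|_F^2=\langle\mathbf{W},\mathcal{P}_\mathcal{T}\mathcal{P}_\Omega\mathcal{P}_\mathcal{T}\mathbf{W}\rangle\le(p+\|\mathcal{E}_1\|)\|\mathbf{W}\|_F^2\le\tfrac54(p+q)\|\mathbf{W}\|_F^2$; for the second, $q\|\mathcal{P}_\mathcal{Q}\mathbf{W}\|_F^2\le q\|\mathbf{W}\|_F^2\le(p+q)\|\mathbf{W}\|_F^2\le\tfrac54(p+q)\|\mathbf{W}\|_F^2$, hence $q\|\mathcal{P}_\mathcal{Q}\mathbf{W}\|_F\le\sqrt{q}\,\sqrt{\tfrac54(p+q)}\,\|\mathbf{W}\|_F$. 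Adding the two contributions produces exactly $\tfrac{1+\sqrt{q}}{2}\sqrt{5(p+q)}\,\|\mathbf{W}\|_F$, which is \eqref{eq:inverse_condition_c}. Since all three bounds hinge on the single random event $\{\|\mathcal{E}_1\|\le\tfrac14(p+q)\}$, they hold jointly with probability at least $1-3n_1^{-\beta}$.
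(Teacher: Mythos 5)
Your proof is correct, and it reaches the three inequalities by the same final manipulations as the paper --- \eqref{eq:inverse_condition_a} and \eqref{eq:inverse_condition_b} from the forward/reverse triangle inequality applied to the operator deviation, and \eqref{eq:inverse_condition_c} from the self-adjointness/idempotence identity $\|\mathcal{P}_\Omega\mathbf{W}\|_F^2=\langle\mathbf{W},\mathcal{P}_\mathcal{T}\mathcal{P}_\Omega\mathcal{P}_\mathcal{T}\mathbf{W}\rangle$ --- but it obtains the operator deviation bound by genuinely different means. The paper's own proof of this lemma is short: it adds the two hypotheses and invokes Lemma \ref{thm:bound_opnormH}, whose proof performs exactly your decomposition into the random part $\mathcal{P}_\mathcal{T}(\mathcal{P}_\Omega-p\mathbf{I})\mathcal{P}_\mathcal{T}$ and the deterministic part $q\mathcal{P}_\mathcal{T}\mathcal{P}_{\mathcal{Q}^\perp}\mathcal{P}_\mathcal{T}$, but controls the random part via Talagrand's concentration inequality (Lemma \ref{thm:bound_supremum}) combined with the expectation bound imported from \cite{candes2007sparsity} (Lemma \ref{thm:bound_expectation}). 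You replace that machinery with an operator Bernstein bound; this is valid --- your per-term bound $B\le 2\mu_0 r/n_2$ and variance proxy $\sigma^2\le 2p\mu_0 r/n_2$ are the right ingredients, and the free constant $C_R$ can absorb both the Bernstein constants and the dimensional prefactor (take $C_R$ large enough that the exponent exceeds $(\beta+2)\log n_1$, so the failure probability stays below $3n_1^{-\beta}$) --- but note it is the Recht/Gross ``simpler approach'' of \cite{recht2011simpler}, not the argument of \cite{candes2009exact}, which uses noncommutative Khintchine plus Talagrand and is the route this paper follows; your attribution is slightly off, though the tool works. Two of your steps are cleaner than the paper's: you bound the deterministic term by its trace, observing that the numerator and denominator of \eqref{eq:muQperp_defn} are exactly $\mathrm{tr}(\mathcal{P}_\mathcal{T}\mathcal{P}_{\mathcal{Q}^\perp}\mathcal{P}_\mathcal{T})$ and $\dim\mathcal{T}$ (the paper runs a supremum computation to the same end), and in \eqref{eq:inverse_condition_c} you dispatch the $q\mathcal{P}_\mathcal{Q}$ summand with plain non-expansiveness and $q\le p+q$, not even needing the $\mu_{\mathcal{Q}^\perp}$ hypothesis there, where the paper detours through a reverse-triangle bound on $\|\mathcal{P}_\mathcal{T}\mathcal{P}_\mathcal{Q}\mathcal{P}_\mathcal{T}(\mathbf{X})\|_F$; both land on the identical constant $\frac{1+\sqrt{q}}{2}\sqrt{5(p+q)}$. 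Your closing observation that all three inequalities hold on the single event $\{\|\mathcal{E}_1\|\le\frac{1}{4}(p+q)\}$, so no union bound is needed, is also exactly how the paper's probability $1-3n_1^{-\beta}$ propagates from Lemma \ref{thm:bound_opnormH}.
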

\fixes{See Appendix \ref{sec:proof_inverse_condition} for the proof.}



\begin{lemma}
    \label{thm:bound_Hsum}
    Suppose that if $\Omega$ is sampled according to the Bernoulli model \eqref{eq:bernoulli}, $n_1\geq n_2$, Assumption \ref{ass:coherence} is satisfied,
    \begin{align}
        \fixes{C_R\sqrt{\frac{\beta r \mu_0 n_1 \log n_1}{m+qn_1n_2}} \leq \frac{1}{4}, \quad \text{and} \quad \sqrt{\frac{\mu_{\mathcal{Q}^\perp}\mu_0qn_1^2n_2^2}{m+qn_1n_2}} \leq \frac{1}{4},} \nonumber
    \end{align}
    then there are numerical constants $C_R$ and $C_{k_0}$ such that for all $\beta>1$,
    \begin{align}
        \frac{1}{p\fixes{+q}}\left\|\mathcal{P}_{\mathcal{T}^\perp}(\mathcal{P}_\Omega\fixes{+q\mathcal{P}_\mathcal{Q}})\mathcal{P}_\mathcal{T}\sum_{k=k_0}^\infty \mathcal{H}^k(\fixes{\mathbf{E}})\right\| \quad\quad\quad\quad\quad\quad \nonumber
    \end{align}
    \begin{align}
        \leq \frac{\fixes{(1+\sqrt{q})}\sqrt{\fixes{10\mu_0r}n_1n_2}\left(C_R \sqrt{\beta rn_1\log n_1}\fixes{+n_1n_2\sqrt{\mu_{\mathcal{Q}^\perp}q}}\right)^{k_0}}{(m\fixes{+qn_1n_2})^{\frac{k_0+1}{2}}}    \label{eq:spectral_norm_sum}
    \end{align}
    with probability at least $1-3n_1^{-\beta}$.
    The operator $\mathcal{H}$ is defined by \eqref{eq:H_defn}.
\end{lemma}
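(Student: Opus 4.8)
The plan is to peel the spectral-norm estimate apart into a deterministic \emph{bridge} that replaces the spectral norm by a Frobenius norm, followed by a geometric-series bound on the Neumann tail $\mathbf{W}:=\sum_{k=k_0}^\infty\mathcal{H}^k(\mathbf{E})$. Recalling from \eqref{eq:H_defn} that $\mathcal{H}=\mathcal{P}_\mathcal{T}-\frac{1}{p+q}\mathcal{P}_\mathcal{T}(\mathcal{P}_\Omega+q\mathcal{P}_\mathcal{Q})\mathcal{P}_\mathcal{T}$, the operator $\mathcal{H}$ maps $\mathcal{T}$ into itself, so because $\mathbf{E}\in\mathcal{T}$ the tail $\mathbf{W}$ also lies in $\mathcal{T}$. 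First I would use $\|\mathcal{P}_{\mathcal{T}^\perp}(\mathbf{Z})\|\le\|\mathbf{Z}\|_F$ to discard both the spectral norm and the outer projection, bounding the left-hand side of \eqref{eq:spectral_norm_sum} by $\frac{1}{p+q}\|(\mathcal{P}_\Omega+q\mathcal{P}_\mathcal{Q})\mathcal{P}_\mathcal{T}(\mathbf{W})\|_F$. I then invoke inequality \eqref{eq:inverse_condition_c} of Lemma \ref{thm:inverse_condition} with $\mathbf{X}=\mathbf{W}$; since $\mathcal{P}_\mathcal{T}(\mathbf{W})=\mathbf{W}$, this contributes the factor $\frac{1+\sqrt q}{2}\sqrt{5(p+q)}$ and reduces the problem to $\|\mathbf{W}\|_F$. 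Rewriting $p+q=(m+qn_1n_2)/(n_1n_2)$ turns $\frac{1}{p+q}\sqrt{5(p+q)}$ into $\sqrt{5n_1n_2/(m+qn_1n_2)}$, which supplies the prefactor and the $(m+qn_1n_2)^{-1/2}$ appearing in \eqref{eq:spectral_norm_sum}; the gap between $\sqrt5$ and $\sqrt{10\mu_0}$ is harmless since $\mu_0\ge1$. The only probabilistic input in this step is the event on which Lemma \ref{thm:inverse_condition} already holds.

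Next I would control $\|\mathbf{W}\|_F$ by a geometric series. The key ingredient is a contraction estimate $\|\mathcal{H}\|_{\mathcal{T}\to\mathcal{T}}\le\rho$ in Frobenius operator norm, with $\rho:=(C_R\sqrt{\beta r n_1\log n_1}+n_1n_2\sqrt{\mu_{\mathcal{Q}^\perp}q})/\sqrt{m+qn_1n_2}$; this is precisely the operator-deviation bound underlying \eqref{eq:inverse_condition_a}--\eqref{eq:inverse_condition_b} before it is coarsened to the constant $\tfrac12$. The two hypotheses of the lemma force each summand of the numerator of $\rho$ to be at most $\tfrac14\sqrt{m+qn_1n_2}$ (using $\mu_0\ge1$), hence $\rho\le\tfrac12$. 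Iterating gives $\|\mathcal{H}^k(\mathbf{E})\|_F\le\rho^k\|\mathbf{E}\|_F$, and since $\mathbf{E}=\sum_{k=1}^r\mathbf{u}_k\mathbf{v}_k^\intercal$ is a partial isometry, $\|\mathbf{E}\|_F=\sqrt r$. Summing from $k_0$ with ratio $\rho\le\tfrac12$ yields $\|\mathbf{W}\|_F\le 2\sqrt r\,\rho^{k_0}$, and substituting $\rho^{k_0}=(C_R\sqrt{\beta r n_1\log n_1}+n_1n_2\sqrt{\mu_{\mathcal{Q}^\perp}q})^{k_0}/(m+qn_1n_2)^{k_0/2}$ into the bridge bound collapses everything onto the right-hand side of \eqref{eq:spectral_norm_sum}.

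The main obstacle is the contraction estimate $\|\mathcal{H}\|_{\mathcal{T}\to\mathcal{T}}\le\rho$ with its \emph{constraint-aware} numerator, and in particular the appearance of $\mu_{\mathcal{Q}^\perp}$. I would split $\mathcal{H}$ about its mean. Under the Bernoulli model \eqref{eq:bernoulli} one has $\mathbb{E}[\mathcal{P}_\Omega]=p\,\mathcal{I}$ (with $\mathcal{I}$ the identity on matrices), so $\mathbb{E}[\mathcal{H}]=\frac{q}{p+q}\mathcal{P}_\mathcal{T}\mathcal{P}_{\mathcal{Q}^\perp}\mathcal{P}_\mathcal{T}$; the deterministic bias is therefore governed exactly by how much of $\mathcal{T}$ the constraint space fails to cover, which is what $\mu_{\mathcal{Q}^\perp}$ in \eqref{eq:muQperp_defn} measures, and bounding $\|\mathcal{P}_\mathcal{T}\mathcal{P}_{\mathcal{Q}^\perp}\mathcal{P}_\mathcal{T}\|$ through that quantity produces the $n_1n_2\sqrt{\mu_{\mathcal{Q}^\perp}q}$ term. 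The centered part $\mathcal{H}-\mathbb{E}[\mathcal{H}]=-\frac{1}{p+q}\mathcal{P}_\mathcal{T}(\mathcal{P}_\Omega-p\,\mathcal{I})\mathcal{P}_\mathcal{T}$ is a sum of independent, mean-zero self-adjoint operators whose per-term operator norm is controlled by the coherence bound $\|\mathcal{P}_\mathcal{T}(\mathbf{e}_i\mathbf{e}_j^\intercal)\|_F^2\le 2\mu_0 r/n_2$ implied by Assumption \ref{ass:coherence}; a noncommutative Bernstein inequality then yields the $C_R\sqrt{\beta r n_1\log n_1}$ term together with the failure probability $3n_1^{-\beta}$. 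The delicate points are (i) arranging these two contributions to add inside $\rho$ rather than compound across the $k_0$ iterations, (ii) tracking the coherence factor $\mu_0$ consistently through the iterated operator norm, and (iii) keeping the whole argument on the single event of probability $1-3n_1^{-\beta}$ shared with Lemma \ref{thm:inverse_condition}, so that the stated probability is not degraded.
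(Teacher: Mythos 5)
Your proposal follows the paper's proof essentially step for step: the identical bridge (drop $\mathcal{P}_{\mathcal{T}^\perp}$ by non-expansiveness, pass to the Frobenius norm, apply \eqref{eq:inverse_condition_c} of Lemma \ref{thm:inverse_condition}), the identical geometric-series treatment of the Neumann tail using $\|\mathcal{H}\|\le\tfrac{1}{2}$ and $\|\mathbf{E}\|_F=\sqrt{r}$, and the identical mean-plus-fluctuation split of $\mathcal{H}$ into $q\mathcal{P}_\mathcal{T}\mathcal{P}_{\mathcal{Q}^\perp}\mathcal{P}_\mathcal{T}$ and $\tfrac{1}{p+q}\mathcal{P}_\mathcal{T}(\mathcal{P}_\Omega-p\mathbf{I})\mathcal{P}_\mathcal{T}$, which the paper packages as Lemma \ref{thm:bound_opnormH} (proved there via Talagrand's inequality, Lemmas \ref{thm:bound_supremum} and \ref{thm:bound_expectation}, rather than a noncommutative Bernstein bound --- an immaterial difference). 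The one blemish is your claim that the contraction rate $\rho$ is $\mu_0$-free: the concentration argument you sketch, whose input is $\|\mathcal{P}_\mathcal{T}(\mathbf{e}_i\mathbf{e}_j^\intercal)\|_F^2\le 2\mu_0 r/n_2$, necessarily puts $\sqrt{\mu_0}$ inside both terms of $\rho$ exactly as in Lemma \ref{thm:bound_opnormH}, and since the right-hand side of \eqref{eq:spectral_norm_sum} extracts only a single $\sqrt{\mu_0}$ into the prefactor $\sqrt{10\mu_0 rn_1n_2}$, this mismatch (which exists equally between the paper's own proof and its statement) is harmless only for $k_0=1$ --- fortunately the sole case used in proving Theorem \ref{thm:samp_complex}.
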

See Appendix \ref{sec:proof_bound_Hsum} for the proof.


\begin{lemma}
    \label{thm:bound_H0}
    \fixes{Suppose that if $\Omega$ is sampled according to the Bernoulli model \eqref{eq:bernoulli}, $n_1\geq n_2$, Assumption \ref{ass:maxEval} is satisfied, $\max\{2,\beta\}n_1\log n_1 \leq m$, and $\frac{qn_1n_2\sqrt{\nu_{\mathcal{Q}^\perp}r}}{m+qn_1n_2} \leq 1$, then
    \begin{align}
        \frac{1}{p\fixes{+q}}\left\|\mathcal{P}_{\mathcal{T}^\perp}(\mathcal{P}_\Omega\fixes{+q\mathcal{P}_\mathcal{Q}})\mathcal{P}_\mathcal{T}(\mathbf{E})\right\| \quad\quad\quad\quad\quad\quad\quad\quad\quad \nonumber
    \end{align}
    \begin{align}
        \leq \frac{C_K e^2 \nu_0 \sqrt{\beta r n_1\log n_1}2^{\frac{2}{\beta \log n_1}+\frac{3}{2}}+\sqrt{qn_1n_2\sqrt{\nu_{\mathcal{Q}^\perp}r}}}{\sqrt{m+qn_1n_2}} \label{eq:spectral_norm_H0}
    \end{align}
    with probability at least $1-n_1^{-\beta}$.}
\end{lemma}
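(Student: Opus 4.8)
The plan is to exploit that $\mathbf{E}=\sum_{k=1}^{r}\mathbf{u}_k\mathbf{v}_k^\intercal$ lies in $\mathcal{T}$, so that $\mathcal{P}_\mathcal{T}(\mathbf{E})=\mathbf{E}$ and, crucially, $\mathcal{P}_{\mathcal{T}^\perp}(\mathbf{E})=\mathbf{0}$. Writing $\mathcal{P}_\Omega\mathbf{E}=(\mathcal{P}_\Omega-p\mathcal{I})\mathbf{E}+p\mathbf{E}$ and $\mathcal{P}_\mathcal{Q}\mathbf{E}=\mathbf{E}-\mathcal{P}_{\mathcal{Q}^\perp}\mathbf{E}$ (here $\mathcal{I}$ denotes the identity operator), and applying $\mathcal{P}_{\mathcal{T}^\perp}$, both copies of $\mathbf{E}$ are annihilated and I obtain the clean decomposition
\[ \mathcal{P}_{\mathcal{T}^\perp}(\mathcal{P}_\Omega+q\mathcal{P}_\mathcal{Q})\mathcal{P}_\mathcal{T}(\mathbf{E})=\mathcal{P}_{\mathcal{T}^\perp}(\mathcal{P}_\Omega-p\mathcal{I})\mathbf{E}-q\,\mathcal{P}_{\mathcal{T}^\perp}\mathcal{P}_{\mathcal{Q}^\perp}\mathbf{E}. \]
After dividing by $p+q=\tfrac{m+qn_1n_2}{n_1n_2}$ and applying the triangle inequality, the first term on the right is a purely random (Bernoulli) object while the second is deterministic; these will match the two summands of the claimed bound respectively.

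The deterministic (constraint) term is straightforward. Since the spectral norm is dominated by the Frobenius norm and orthogonal projections are Frobenius-contractive, the definition \eqref{eq:nuQperp_defn} gives $\|\mathcal{P}_{\mathcal{T}^\perp}\mathcal{P}_{\mathcal{Q}^\perp}\mathbf{E}\|\le\|\mathcal{P}_{\mathcal{Q}^\perp}\mathbf{E}\|_F=\sqrt{\nu_{\mathcal{Q}^\perp}r}$, whence $\frac{q}{p+q}\|\mathcal{P}_{\mathcal{T}^\perp}\mathcal{P}_{\mathcal{Q}^\perp}\mathbf{E}\|\le \frac{qn_1n_2\sqrt{\nu_{\mathcal{Q}^\perp}r}}{m+qn_1n_2}$. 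Denoting this last quantity by $z$, the hypothesis $z\le 1$ lets me replace $z$ by $\sqrt{z}=\frac{\sqrt{qn_1n_2\sqrt{\nu_{\mathcal{Q}^\perp}r}}}{\sqrt{m+qn_1n_2}}$, which is exactly the second term of the bound \eqref{eq:spectral_norm_H0}.

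The random term is the main obstacle and is where the constants are earned. I would expand it as a sum of independent, mean-zero random matrices, $\mathcal{P}_{\mathcal{T}^\perp}(\mathcal{P}_\Omega-p\mathcal{I})\mathbf{E}=\sum_{(i,j)}(\delta_{ij}-p)E_{ij}\,\mathcal{P}_{\mathcal{T}^\perp}(\mathbf{e}_i\mathbf{e}_j^\intercal)$, and control its spectral norm by a matrix concentration (moment) argument in the style of \cite{candes2009exact}. Each summand is uniformly bounded, since $|\delta_{ij}-p|\le 1$, $\|\mathcal{P}_{\mathcal{T}^\perp}(\mathbf{e}_i\mathbf{e}_j^\intercal)\|\le1$, and Assumption \ref{ass:maxEval} gives $|E_{ij}|\le\nu_0\sqrt{r/(n_1n_2)}$; note that only Assumption \ref{ass:maxEval}, not the coherence Assumption \ref{ass:coherence}, enters here. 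For the variance I would use the rank-one factorization $\mathcal{P}_{\mathcal{T}^\perp}(\mathbf{e}_i\mathbf{e}_j^\intercal)=[(\mathbf{I}_{n_1}-\mathbf{P}_\mathcal{U})\mathbf{e}_i][(\mathbf{I}_{n_2}-\mathbf{P}_\mathcal{V})\mathbf{e}_j]^\intercal$ together with the row/column sums $\sum_j E_{ij}^2\le \nu_0^2 r/n_1$ and $\sum_i E_{ij}^2\le \nu_0^2 r/n_2$ (again from the max-entry bound) to obtain a variance proxy of order $p\,\nu_0^2 r/n_2$.

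Feeding the per-term bound $R$ and the variance $\sigma^2$ into the concentration inequality at tail level $\beta$ produces an estimate of order $\sigma\sqrt{\beta\log n_1}+R\,\beta\log n_1$ with failure probability at most $n_1^{-\beta}$; the bounded term is absorbed into the variance term precisely because $m\ge\max\{2,\beta\}n_1\log n_1\ge \beta n_2\log n_1$ (using $n_1\ge n_2$). Finally I would convert the $\tfrac{1}{p+q}$ prefactor using $\tfrac{\sqrt{p}}{p+q}\le\tfrac{1}{\sqrt{p+q}}=\tfrac{\sqrt{n_1n_2}}{\sqrt{m+qn_1n_2}}$, which turns the variance-dominated estimate into the first summand of \eqref{eq:spectral_norm_H0}. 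The genuinely delicate part is tracking the explicit constants $e^2$ and $2^{\frac{2}{\beta\log n_1}+\frac{3}{2}}$: these arise from optimizing the moment order near $\beta\log n_1$ and rounding it to an even integer via Stirling-type bounds, exactly as in the spectral-norm estimate of \cite{candes2009exact}. Combining the two terms by the triangle inequality, and noting that both are governed by the single concentration event, yields the stated bound with probability at least $1-n_1^{-\beta}$.
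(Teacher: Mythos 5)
Your decomposition and your treatment of the constraint term coincide exactly with the paper's proof: the paper also writes the quantity as $\frac{1}{p+q}\|\mathcal{P}_{\mathcal{T}^\perp}(\mathcal{P}_\Omega+q\mathcal{P}_\mathcal{Q}-(p+q)\mathbf{I})(\mathbf{E})\|$, splits off $\frac{1}{p+q}\|(\mathcal{P}_\Omega-p\mathbf{I})(\mathbf{E})\|+\frac{q}{p+q}\|\mathcal{P}_{\mathcal{Q}^\perp}(\mathbf{E})\|$ by the triangle inequality and non-expansiveness, and bounds the second piece exactly as you do (spectral $\leq$ Frobenius, $\|\mathcal{P}_{\mathcal{Q}^\perp}(\mathbf{E})\|_F=\sqrt{\nu_{\mathcal{Q}^\perp}r}$ from \eqref{eq:nuQperp_defn}, then $z\leq\sqrt{z}$ using the hypothesis $z\leq 1$). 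Where you diverge is the random term. The paper does not use a Bernstein-type inequality with a variance proxy; it runs the Cand\`es--Recht moment method: symmetrization against an independent copy with a Rademacher sequence, passage to Schatten-$b'$ norms with $b'=\beta\log n_1$ (via $\|\cdot\|\leq\|\cdot\|_{S_{b'}}\leq e\|\cdot\|$), the noncommutative Khintchine inequality (Lemma \ref{thm:khintchine}) conditionally on $\delta$, Lemma \ref{thm:deltas} to control the diagonal matrices $\sum_{ij}\delta_{ij}E_{ij}^2\mathbf{e}_i\mathbf{e}_i^\intercal$ --- this is precisely where the hypothesis $m\geq\max\{2,\beta\}n_1\log n_1$ is consumed, since Lemma \ref{thm:deltas} needs $b'\leq n_2p=m/n_1$ and $n_2p\geq 2\log n_1$ --- and finally Markov's inequality at level $t=e$. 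That chain is what manufactures the exact constants in \eqref{eq:spectral_norm_H0}: $C_K$ is the Khintchine constant, $e^2$ is one factor $e$ from the Schatten-to-spectral conversion and one from Markov, and $2^{\frac{2}{\beta\log n_1}+\frac{3}{2}}$ collects the $4^{1/b'}$ of Lemma \ref{thm:deltas} with the symmetrization factor $2\sqrt{2}$; it also yields the clean failure probability $n_1^{-\beta}$ with no dimensional prefactor.

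Your Bernstein-style sketch is sound at the level of orders: your variance proxy $p\nu_0^2 r/n_2$ is exactly the quantity Lemma \ref{thm:deltas} controls in expectation, your use of $m\geq\beta n_1\log n_1$ and $n_1\geq n_2$ to absorb the $R\,\beta\log n_1$ term into the $\sigma\sqrt{\beta\log n_1}$ term checks out, and your prefactor conversion $\sqrt{p}/(p+q)\leq 1/\sqrt{p+q}$ matches the paper's step. However, as a literal route to the stated inequality it falls short in two places: matrix Bernstein carries a dimensional factor ($\approx(n_1+n_2)e^{-t}$) in its tail, so achieving exactly $1-n_1^{-\beta}$ forces a constant inflation you do not account for, and no Bernstein bound produces the specific constant $C_K e^2 2^{\frac{2}{\beta\log n_1}+\frac{3}{2}}$ of the statement. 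You acknowledge both by deferring the constants to the moment-order optimization of \cite{candes2009exact}; just be aware that this deferred computation is not a finishing touch but the bulk of the paper's actual proof (its displays \eqref{eq:H0_bound2}--\eqref{eq:H0_bound9}), so your proposal is best read as the correct skeleton plus an accurate pointer to where the real work lives.
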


\fixes{See Appendix \ref{sec:proof_bound_H0} for the proof.}

Finally with the above lemmas in place, we formally prove Theorem \ref{thm:samp_complex}.
The main effort of this proof will be to show that there exists a subgradient of the nuclear norm at $\mathbf{M}$, the underlying matrix, which satisfies Lemma \ref{thm:nuc_norm_opt} under high-probability given a sufficient number of samples.
If Lemma \ref{thm:nuc_norm_opt} is satisfied, then the optimal solution to Problem \eqref{eq:nuc_norm_prob} and $\mathbf{M}$ uniquely coincide.
It is first proved for the Bernoulli sampling model \eqref{eq:bernoulli} and then the results are converted into that of being under the uniform sampling model.
\begin{proof}
    The first step is to find a candidate $\mathbf{Y}$ that satisfies the first condition of Lemma \ref{thm:nuc_norm_opt} by solving
    \begin{subequations}
        \label{eq:candidateY_prob}
	    \begin{align}
		    \min_{\mathbf{X},\fixes{\mathbf{Z}}} \quad & \frac{1}{2}\|\mathbf{X}\|_F^2\fixes{+\frac{1}{2q}\|\mathbf{Z}\|_F^2} \\
    	    \text{s.t.} \quad &  \mathcal{P}_\mathcal{T}\left(\mathcal{P}_\Omega(\mathbf{X}) \fixes{+\mathcal{P}_\mathcal{Q}(\mathbf{Z})}\right)= \fixes{\mathbf{E}}
	    \end{align}
    \end{subequations}
    \fixes{where $q>0$ can later be tuned} and using its solution \fixes{$\mathbf{X}+\mathbf{Z}$} as $\mathbf{Y}$.

    The Karush-Kuhn-Tucker conditions for optimality with dual matrix $\boldsymbol{\nu}$ \fixes{(not to be confused with the constants $\nu_0$, $\nu_\mathcal{Q}$, and $\nu_{\mathcal{Q}^\perp}$)} gives us
    \begin{subequations}
        \begin{align}
            \mathbf{X} - \mathcal{P}_\Omega\mathcal{P}_\mathcal{T}(\boldsymbol{\nu}) & = 0 \label{eq:candidateY_prob_kkt1} \\
            \fixes{\frac{1}{q}\mathbf{Z} - \mathcal{P}_\mathcal{Q}\mathcal{P}_\mathcal{T}(\boldsymbol{\nu})} & = 0 \label{eq:candidateY_prob_kkt2} \\
            \mathcal{P}_\mathcal{T}\left(\mathcal{P}_\Omega(\mathbf{X}) \fixes{+\mathcal{P}_\mathcal{Q}(\mathbf{Z})}\right) - \fixes{\mathbf{E}} & = 0. \label{eq:candidateY_prob_kkt3}
        \end{align}
    \end{subequations}
    To solve for a closed form of $\mathbf{X}\fixes{+\mathbf{Z}}$, start with \eqref{eq:candidateY_prob_kkt1} \fixes{and \eqref{eq:candidateY_prob_kkt2}:}
    \begin{subequations}
        \begin{align}
            \mathbf{X} & = \mathcal{P}_\Omega\mathcal{P}_\mathcal{T}(\boldsymbol{\nu}), \label{eq:candidateY_prob_X}  \\
            \fixes{\mathbf{Z}} & = \fixes{q\mathcal{P}_\mathcal{Q}\mathcal{P}_\mathcal{T}(\boldsymbol{\nu})}. \label{eq:candidateY_prob_Z}
    \end{align}
    \end{subequations}
    Next, apply $\mathcal{P}_\mathcal{T}\mathcal{P}_\Omega$ \fixes{and $\mathcal{P}_\mathcal{T}\mathcal{P}_\mathcal{Q}$, respectively:}
    \begin{align}
        \mathcal{P}_\mathcal{T}\mathcal{P}_\Omega(\mathbf{X}) & = \mathcal{P}_\mathcal{T}\mathcal{P}_\Omega\mathcal{P}_\Omega\mathcal{P}_\mathcal{T}(\boldsymbol{\nu}) = \mathcal{P}_\mathcal{T}\mathcal{P}_\Omega\mathcal{P}_\mathcal{T}(\boldsymbol{\nu}), \nonumber \\
        \fixes{\mathcal{P}_\mathcal{T}\mathcal{P}_\mathcal{Q}(\mathbf{Z})} & = \fixes{q\mathcal{P}_\mathcal{T}\mathcal{P}_\mathcal{Q}\mathcal{P}_\mathcal{Q}\mathcal{P}_\mathcal{T}(\boldsymbol{\nu}) = q\mathcal{P}_\mathcal{T}\mathcal{P}_\mathcal{Q}\mathcal{P}_\mathcal{T}(\boldsymbol{\nu}),} \nonumber
    \end{align}
    and then apply \eqref{eq:candidateY_prob_kkt3} \fixes{after summing the two} to get
    \begin{align}
        \fixes{\mathbf{E}} = \mathcal{P}_\mathcal{T}\left(\mathcal{P}_\Omega\fixes{+q\mathcal{P}_\mathcal{Q}}\right)\mathcal{P}_\mathcal{T}(\boldsymbol{\nu}). \nonumber
    \end{align}
    Taking the inverse, assuming it exists, gives
    \begin{align}
        \boldsymbol{\nu} = \left(\mathcal{P}_\mathcal{T}\left(\mathcal{P}_\Omega\fixes{+q\mathcal{P}_\mathcal{Q}}\right)\mathcal{P}_\mathcal{T}\right)^{-1}(\fixes{\mathbf{E}}), \nonumber
    \end{align}
    and finally applying it to \fixes{\eqref{eq:candidateY_prob_X} and \eqref{eq:candidateY_prob_Z}} gives the candidate $\mathbf{Y}$:
    \begin{align}
        \mathbf{Y} & = \mathbf{X}\fixes{+\mathbf{Z}} \nonumber \\
        & = \left(\mathcal{P}_\Omega\fixes{+q\mathcal{P}_\mathcal{Q}}\right)\mathcal{P}_\mathcal{T}\left(\mathcal{P}_\mathcal{T}\left(\mathcal{P}_\Omega\fixes{+q\mathcal{P}_\mathcal{Q}}\right)\mathcal{P}_\mathcal{T}\right)^{-1}(\fixes{\mathbf{E}}). \label{eq:candidateY}
    \end{align}

    The benefit to the formulation of Problem \eqref{eq:candidateY_prob} is that it minimizes $\|\mathcal{P}_{\mathcal{T}^\perp}(\mathbf{Y})\|_F$.
    By Pythagoras we have
    \begin{align}
        \|\mathbf{Y}\|_F^2 & = \|\mathcal{P}_{\mathcal{T}}(\fixes{\mathbf{X}+\mathbf{Z}})\|_F^2 + \|\mathcal{P}_{\mathcal{T}^\perp}(\fixes{\mathbf{X}+\mathbf{Z}})\|_F^2. \nonumber
    \end{align}
    The first term on the RHS remains constant since $\mathcal{P}_\mathcal{T}(\fixes{\mathbf{X}+\mathbf{Z}})=\mathcal{P}_\mathcal{T}\left(\fixes{\mathcal{P}_\Omega(\mathbf{X}) +\mathcal{P}_\mathcal{Q}(\mathbf{Z})}\right)=\fixes{\mathbf{E}}$ since \fixes{from \eqref{eq:candidateY_prob_X} and \eqref{eq:candidateY_prob_Z} we have $\mathcal{P}_\Omega(\mathbf{X})=\mathcal{P}_\Omega\mathcal{P}_\Omega\mathcal{P}_\mathcal{T}(\boldsymbol{\nu})=\mathcal{P}_\Omega\mathcal{P}_\mathcal{T}(\boldsymbol{\nu})=\mathbf{X}$ and $\mathcal{P}_\mathcal{Q}(\mathbf{Z})=\mathcal{P}_\mathcal{Q}\mathcal{P}_\mathcal{Q}\mathcal{P}_\mathcal{T}(\boldsymbol{\nu})=\mathcal{P}_\mathcal{Q}\mathcal{P}_\mathcal{T}(\boldsymbol{\nu})=\mathbf{Z}$, respectively.}
    Thus, only the second term can vary  which is an upper bound on the spectral norm $\left\|\mathcal{P}_{\mathcal{T}^\perp}(\mathbf{Y})\right\|$.

    The next step is to transform the candidate subgradient $\mathbf{Y}$ given by Equation \eqref{eq:candidateY} into a more manageable form.
    Notice that by taking its projection onto $\mathcal{T}$, the candidate $\mathbf{Y}$ trivially satisfies the first part of the first condition of Lemma \ref{thm:nuc_norm_opt}.
    
    \fixes{To express $\left(\mathcal{P}_\mathcal{T}\left(\mathcal{P}_\Omega\fixes{+q\mathcal{P}_\mathcal{Q}}\right)\mathcal{P}_\mathcal{T}\right)^{-1}(\mathbf{E})$ in an analyzable form via the Neumann series,} let us define the following operator
    \begin{align}
        \mathcal{H} := \mathcal{P}_\mathcal{T}-\frac{1}{p\fixes{+q}}\mathcal{P}_{\mathcal{T}}\left(\mathcal{P}_\Omega\fixes{+q\mathcal{P}_\mathcal{Q}}\right)\mathcal{P}_\mathcal{T} \label{eq:H_defn}
    \end{align}
    \fixes{where $q\in[0,1]$ is an adjustable parameter,} which gives
    \begin{align}
        \left(\mathcal{P}_\mathcal{T}\left(\mathcal{P}_\Omega\fixes{+q\mathcal{P}_\mathcal{Q}}\right)\mathcal{P}_\mathcal{T}\right)^{-1}(\fixes{\mathbf{E}}) & = ((p\fixes{+q})(\mathbf{I}-\mathcal{H}))^{-1}(\fixes{\mathbf{E}}) \nonumber \\
        & = \frac{1}{p\fixes{+q}}\sum_{k=0}^\infty \mathcal{H}^k(\fixes{\mathbf{E}}). \nonumber
    \end{align}
    \fixes{From the first two inequalities of Lemma \ref{thm:inverse_condition}, we have that the operation $\mathcal{P}_\mathcal{T}\left(\mathcal{P}_\Omega+q\mathcal{P}_\mathcal{Q}\right)\mathcal{P}_\mathcal{T}$ is well-conditioned and thus invertible.
    Also, since the inequalities are true when $q=0$, we have that
    \begin{align}
        \frac{1}{2}p\|\mathcal{P}_\mathcal{T}(\mathbf{X})\|_F \leq \|\mathcal{P}_\mathcal{T}\mathcal{P}_\Omega\mathcal{P}_\mathcal{T}(\mathbf{X})\|_F \leq \frac{3}{2}p\|\mathcal{P}_\mathcal{T}(\mathbf{X})\|_F \nonumber
    \end{align}
    which means that the operator $\mathcal{P}_\mathcal{T}\mathcal{P}_\Omega\mathcal{P}_\mathcal{T}$ is invertible for any $p>0$.
    This results in satisfying the invertibility condition of the sampling operator $\mathcal{R}_\Omega$ in Lemma \ref{thm:nuc_norm_opt}.} 
    
    \fixes{From the above Neumann series and since $\mathbf{E}$ is in $\mathcal{T}$, we can express $\|\mathcal{P}_{\mathcal{T}^\perp}(\mathbf{Y})\|$ as
    \begin{align}
        \|\mathcal{P}_{\mathcal{T}^\perp}(\mathbf{Y})\| \quad\quad\quad\quad\quad\quad\quad\quad\quad\quad\quad\quad\quad\quad\quad\quad\quad\quad \nonumber
    \end{align}
    \begin{align}
        & = \frac{1}{p+q}\left\|\left(\mathcal{P}_{\mathcal{T}^\perp}(\mathcal{P}_\Omega+q\mathcal{P}_\mathcal{Q})\mathcal{P}_\mathcal{T}\right)\left(\sum_{k=0}^\infty \mathcal{H}^k(\mathbf{E})\right)\right\| \nonumber \\
        & \leq \frac{1}{p+q}\left\|\left(\mathcal{P}_{\mathcal{T}^\perp}(\mathcal{P}_\Omega+q\mathcal{P}_\mathcal{Q})\mathcal{P}_\mathcal{T}\right)(\mathbf{E})\right\| \nonumber \\
        & \quad + \frac{1}{p+q}\left\|\left(\mathcal{P}_{\mathcal{T}^\perp}(\mathcal{P}_\Omega+q\mathcal{P}_\mathcal{Q})\mathcal{P}_\mathcal{T}\right)\left(\sum_{k=1}^\infty \mathcal{H}^k(\mathbf{E})\right)\right\| \nonumber \\
        & \leq 2 \frac{1}{p+q} \max \Bigg\{\left\|\left(\mathcal{P}_{\mathcal{T}^\perp}(\mathcal{P}_\Omega+q\mathcal{P}_\mathcal{Q})\mathcal{P}_\mathcal{T}\right)(\mathbf{E})\right\|, \nonumber \\
        & \quad\quad\quad \left\|\left(\mathcal{P}_{\mathcal{T}^\perp}(\mathcal{P}_\Omega+q\mathcal{P}_\mathcal{Q})\mathcal{P}_\mathcal{T}\right)\left(\sum_{k=1}^\infty \mathcal{H}^k(\mathbf{E})\right)\right\| \Bigg\}. \nonumber
    \end{align}
    Thus, setting the RHS to be less than 1 and using Lemma \ref{thm:bound_H0} and Lemma \ref{thm:bound_Hsum} with $k_0=1$ in the arguments gets  \eqref{eq:thm_samp_complex_a} and \eqref{eq:thm_samp_complex_b} respectively, after solving for the sample size $m$.
    Finally, the inequalities \eqref{eq:thm_samp_complex_c} and \eqref{eq:thm_samp_complex_d} come from the conditions in Lemmas \ref{thm:inverse_condition} and \ref{thm:bound_Hsum}, and inequalities \eqref{eq:thm_samp_complex_e} and \eqref{eq:thm_samp_complex_f} come from the conditions in Lemma \ref{thm:bound_H0}.}
    
    However, everything previous to this point proves the satisfaction of Lemma \ref{thm:nuc_norm_opt} for the Bernoulli sampling model with probability at least $1-3n_1^{-\beta}$ as stated in Lemmas \ref{thm:bound_opnormH} and \ref{thm:bound_Hsum}.
    From \cite{candes2006robust} Section II.C, the probability of failure with the uniform sampling model can be bounded by twice that of the Bernoulli sampling model.
    Therefore, we multiply the failure probability by two to get $1-6n_1^{-\beta}$ as stated in the theorem.
\end{proof}

\section{Performance Evaluation}
\label{sec:perf_eval}
First, we demonstrate how the addition of linear equality constraints can decrease the number of samples needed to exactly recover the underlying matrix.
Afterwards, we apply the technique to distribution network data to show how estimation error can be decreased.

\subsection{Matrix Completion with Constraints}
\label{sec:perf_eval_toy}

The goal of this simulation is to observe the impact of the size of \fixes{$\mu_{\mathcal{Q}^\perp}$, and $\nu_{\mathcal{Q}^\perp}$} on the sample complexity through randomly generated matrix completion examples.

\subsubsection{Setup}

The underlying matrix $\mathbf{M}$ was built by first generating an $n_1\times n_2$ matrix with each element sampled independently from a \fixes{uniform} distribution.
The singular value decomposition of the generated matrix was taken which gives $n_2$ singular values $(\sigma_1,\dots,\sigma_{n_2})$, assuming $n_1\geq n_2$, and their associated basis vectors $(\mathbf{u}_1,\dots,\mathbf{u}_{n_2})$ and $(\mathbf{v}_1,\dots,\mathbf{v}_{n_2})$.
To make the rank of $\mathbf{M}$ to be $r$, \fixes{the first $r$ singular values and associated basis vectors} are combined together in \eqref{eq:svd} to get the final $\mathbf{M}$ that is used in the simulation.

The equality constraints \eqref{eq:svd_const} are generated in two steps.
First, each matrix $\mathbf{A}^{(l)}$ is made by generating an $n_1\times n_2$ matrix with each element sampled independently from a \fixes{uniform} distribution.
Afterwards each scalar $b^{(l)}$ is determined by evaluating the LHS of \eqref{eq:svd_const} \fixes{with a convex combination of $\mathbf{A}^{(l)}$ projected into $\mathcal{T}$ and $\mathcal{T}^\perp$.
Thus, the convex combination of the projected matrix pair can be used to tune the value of $\mu_{\mathcal{Q}^\perp}$ and $\nu_{\mathcal{Q}^\perp}$ while holding the number of constraints constant.}
There are \fixes{$r(n_1+n_2-r)$} equality constraints generated by this process \fixes{so that when only the $\mathbf{A}^{(l)}$ matrices projected into $\mathcal{T}$ are used, they can form the $\mathcal{Q}$ vector space that covers all of $\mathcal{T}$.}

The uniform sampling was done by taking a random permutation of all the locations for an $n_1\times n_2$ matrix and using the first $m$ locations as the observed samples.
To increase (decrease) number of samples, the next (previous) locations in the permutation were simply added to (subtracted from) the existing observed samples.
Multiple random permutations were tested in parallel so that a sample probably of exact matrix completion could be calculated among them.
The probability of exact matrix completion for a given sample size was calculated as the fraction of random permutation sequences in which the solution to \eqref{eq:nuc_norm_prob} subtracted from $\mathbf{M}$ resulted in a Frobenius norm smaller that a specific tolerance.

The specifics of this simulation were under the following settings: $n_1=40$, $n_2=\fixes{10}$, $r=2$, \fixes{400} random permutations, and the maximum \fixes{relative} tolerance in the Forbenius norm to determine exact completion was set to $10^{-\fixes{3}}$.

\subsubsection{Results}

To demonstrate \fixes{how} the sample complexity \fixes{is affected by the relationship between the vector space $\mathcal{T}$ and the vector space of equality constraint matrices $\mathcal{Q}$, we indirectly varied $\mu_{\mathcal{Q}^\perp}$ and $\nu_{\mathcal{Q}^\perp}$ and} fixed 
the probability for exact matrix completion for a given set of linear equality constraints to \fixes{90}\%.
This was done by increasing the sample size until \fixes{90}\% of the \fixes{400} randomly permuted sequences of matrix locations each gave an exact matrix completion.
\fixes{This was done for each measurement pair $(\mu_{\mathcal{Q}^\perp},\nu_{\mathcal{Q}^\perp})$ that resulted from varying the convex combination between each $\mathbf{A}^{(l)}$ being projected into $\mathcal{T}$ and $\mathcal{T}^\perp$.}
The constrained nuclear norm minimization matrix completion method \eqref{eq:nuc_norm_prob} was tested against its unconstrained version.

\fixes{The number of samples needed versus $\mu_{\mathcal{Q}^\perp}$ and $\nu_{\mathcal{Q}^\perp}$ are shown in Figure \ref{fig:toyexample_40x10_mix32}.
It is easily observed that decreasing $\mu_{\mathcal{Q}^\perp}$ and $\nu_{\mathcal{Q}^\perp}$ simultaneously decreases the number of samples needed.
In fact, while the matrix completion case with $r(n_1+n_2-r)=96$ equality constraints and $\mu_{\mathcal{Q}^\perp}=\nu_{\mathcal{Q}^\perp}=1$ decreases the amount of samples needed by 11\% from the unconstrained case, the case where $\mu_{\mathcal{Q}^\perp}=\nu_{\mathcal{Q}^\perp}=0$ decreases the amount of samples needed by 76\%.}

\fixes{It is also interesting to note that if the equality constraints were naively thought of as just permanent diffused samples, then adding 96 linearly independent equality constraints would have an expected decrease of 96 samples which amounts to only a 36\% decrease from the unconstrained case.
This decrease in sample size is smaller than the case with $\mu_{\mathcal{Q}^\perp}\leq 0.97$ and $\nu_{\mathcal{Q}^\perp}\leq 0.54$ (see Figure \ref{fig:toyexample_40x10_mix32}).
Therefore, this shows that the advantages of added equality constraints comes more from how well the vector space spanned by $\mathbf{A}^{(1)},\dots,\mathbf{A}^{(h)}$ covers $\mathcal{T}$ than the number of equality constraints itself.
In other words, the added information from the equality constraints can reveal a portion of $\mathcal{T}$ a priori that replaces some of need for samples to learn $\mathcal{T}$.
Thus, the more of $\mathcal{T}$ that is revealed via the equality constraints as measured by $\mu_{\mathcal{Q}^\perp}$ and $\nu_{\mathcal{Q}^\perp}$, the less samples are needed.
From the perspective of a given set of equality constraints, this means that a reduction in needed samples can be significant or insignificant depending on the singular basis vectors $\mathbf{u}_1,\dots,\mathbf{u}_r$ and $\mathbf{v}_1,\dots,\mathbf{v}_r$ used to build $\mathcal{T}$ for underlying matrix $\mathbf{M}$.}

\begin{figure}
    \centering
    \subfigure[]{\includegraphics[width=0.95\columnwidth]{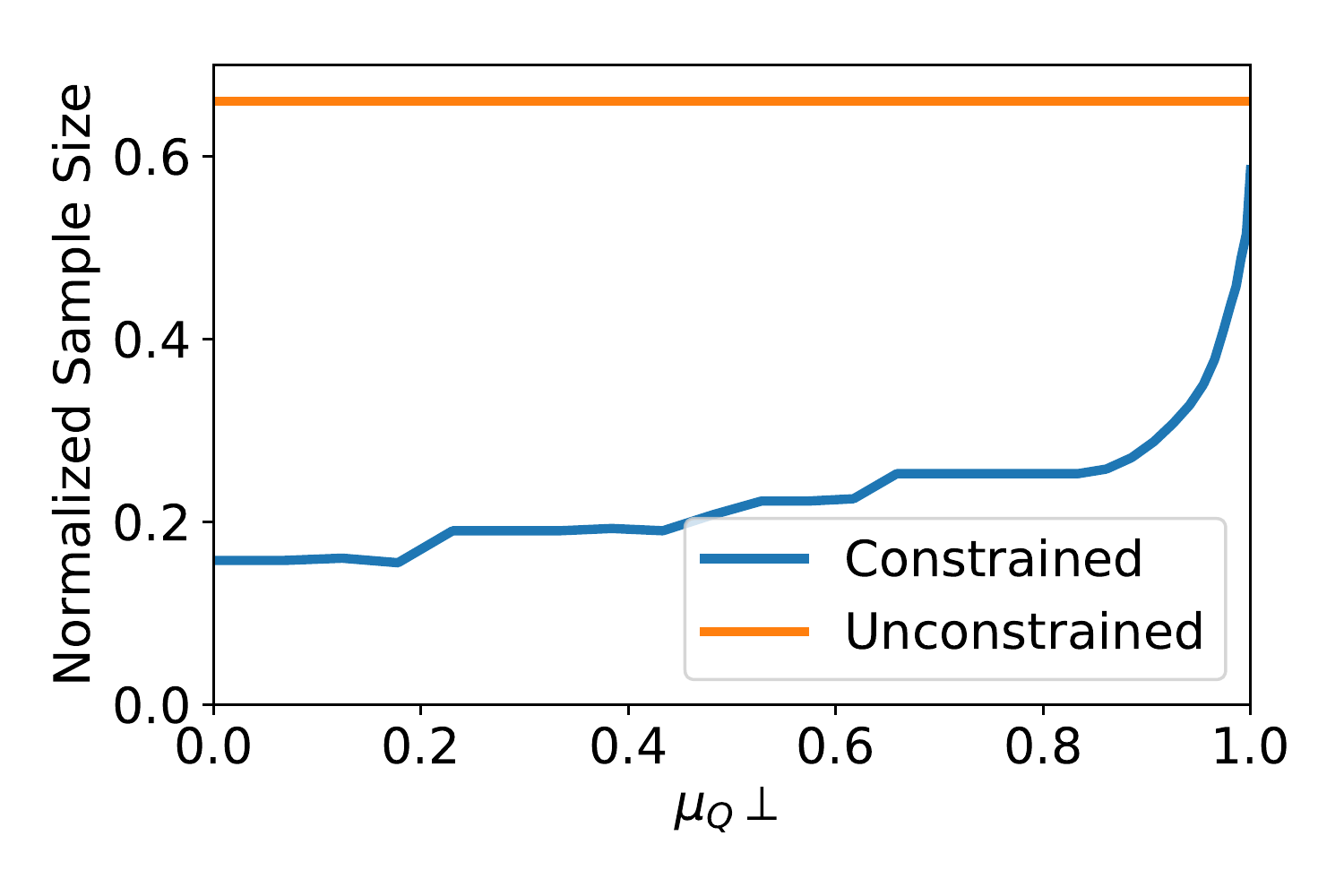}}
    \subfigure[]{\includegraphics[width=0.95\columnwidth]{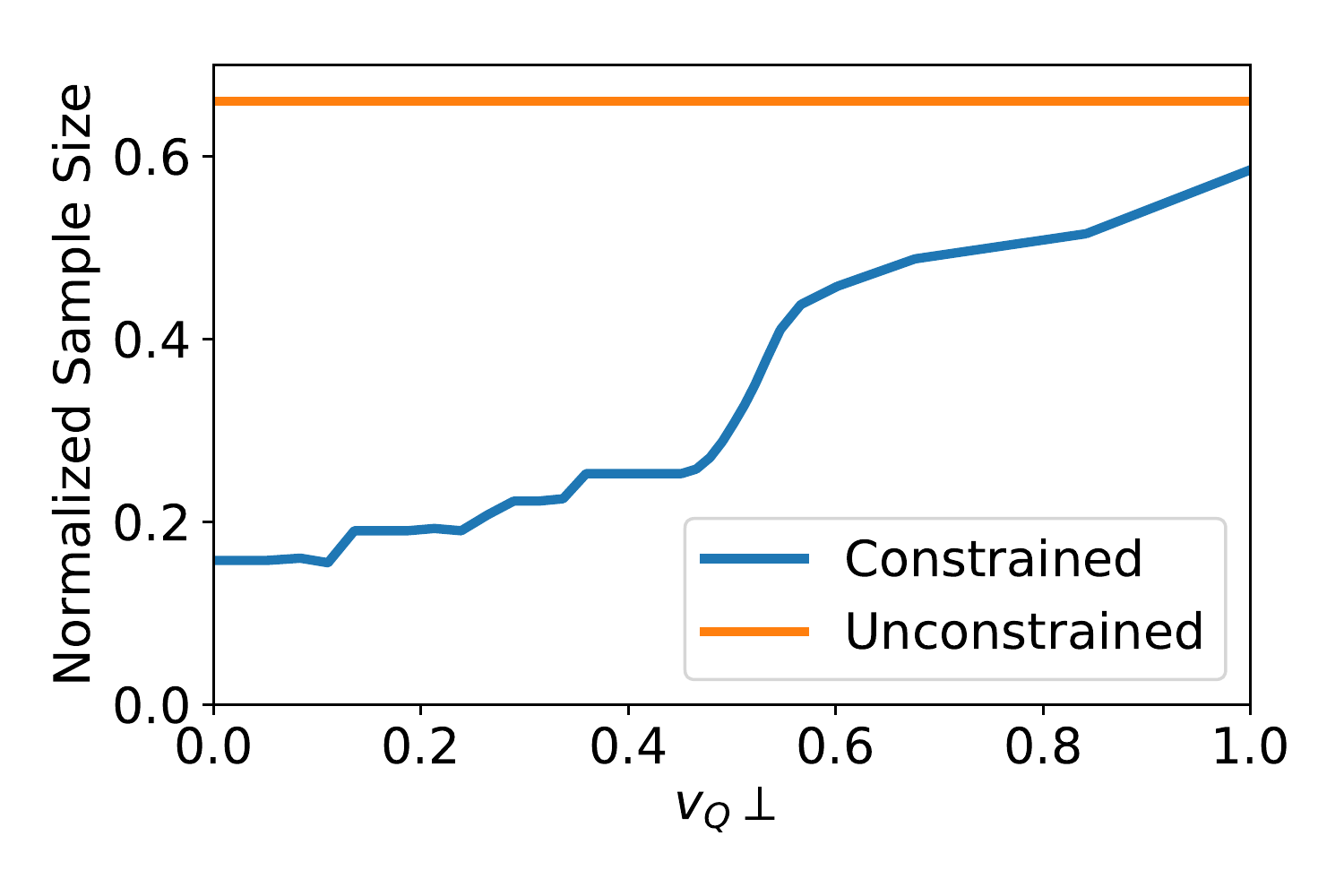}}
    \caption{Number of samples needed for exact completion with a probability of 0.9 normalized by the total number of elements $n_1n_2$ versus the constraint measurements (a) $\mu_{\mathcal{Q}^\perp}$ and (b) $\nu_{\mathcal{Q}^\perp}$ for both the constrained and unconstrained matrix completion problems.}
    \label{fig:toyexample_40x10_mix32}
\end{figure}

\subsection{Distribution Network}
\label{sec:perf_eval_distnet}

Using a power system emulator, our goal is to show how incorporating equality constraints based on the physics of the system can improve the accuracy for state estimation.

\subsubsection{Setup}


The distribution network data was created using MATPOWER~\cite{zimmerman2011matpower} on a 141 bus radial distribution network test case~\cite{khodr2008maximum}.
A diagram of the network is shown in Appendix \ref{sec:app_dist}.
The underlying matrix $\mathbf{M}$ that represents the state of the power system was formed according to the structure described in Section \ref{sec:state_est_prob} with all quantities being transferred into pu.
Therefore, the state matrix $\mathbf{M}$ has 281 rows and 17 columns.
The set of $4(n_\text{b}+n_\text{l})=1124$ linear equality constraints \eqref{eq:nuc_norm_prob_const} were formed according to the following linear power system equations: \eqref{eq:syseqn_line_pwrcons}-\eqref{eq:syseqn_ohms_law}.
An additional set of $n_\text{l}=140$ linear equality constraints were formed according to the linear approximation equations \eqref{eq:lin_approx_const}.


To sample the values of the state matrix $\mathbf{M}$, we set that Bus 1 and Bus 80 each have a PMU which can measure all 8 bus state values.
The remaining 139 buses and 140 lines were chosen uniformly at random to be measured.
When a bus was chosen, only the following 4 values were revealed: the real and reactive power injections, the magnitude of the voltage, and the magnitude of the current injection.
Therefore for both the voltage and current injections, the complex parts are never observed except for Buses 1 and 80 at the start.
When a line was chosen, only the following 5 values were revealed: real and reactive power injections into the line for both the From and To sides of the line, and the magnitude of the current flowing through the line.
Therefore, the real and reactive power losses and the complex current flow are never observed.
The number of samples in the figures refer to the number of lines and buses samples, not the number of data points taken.
For each sample size, 50 different random permutations of the buses and lines were used to do the uniform sampling, similar to Section \ref{sec:perf_eval_toy}.


Two different sets of constraints were tested in Problem \eqref{eq:nuc_norm_prob} for state estimation.
The first set (``w/ const") only includes the linear equality constraints derived from the linear power system model equations \eqref{eq:syseqn_line_pwrcons}-\eqref{eq:syseqn_ohms_law}.
The second set (``w/ const+appx"), are the linear equality constraints derived from the linear approximation equations \eqref{eq:lin_approx_const} and include the first set.
We also solved the Least Squares (LS) problem as a benchmark by replacing the Nuclear Norm with the Frobenius Norm in Problem \eqref{eq:nuc_norm_prob}.
To measure the estimation error, the Root Mean Squared Error (RMSE) was taken for voltage magnitude and voltage angle for the unmeasured values.
Because all other state quantities can be derived from the complex bus voltages and the physical properties of the power system equipment, our focus in these simulations is on the accuracy of the estimated complex voltages at the buses.
The estimated voltage angle is calculated by translating $\text{Re}(V_s)$ and $\text{Im}(V_s)$ from the estimated state matrix into polar form.
The estimated voltage magnitude $|V_s|$ is taken directly from the estimated state matrix.
The error is calculated by subtracting the estimation from the true value and are only of the unobserved matrix elements.

\begin{figure}[t]
    \subfigure[Voltage Magnitude]{\includegraphics[width=0.9\columnwidth]{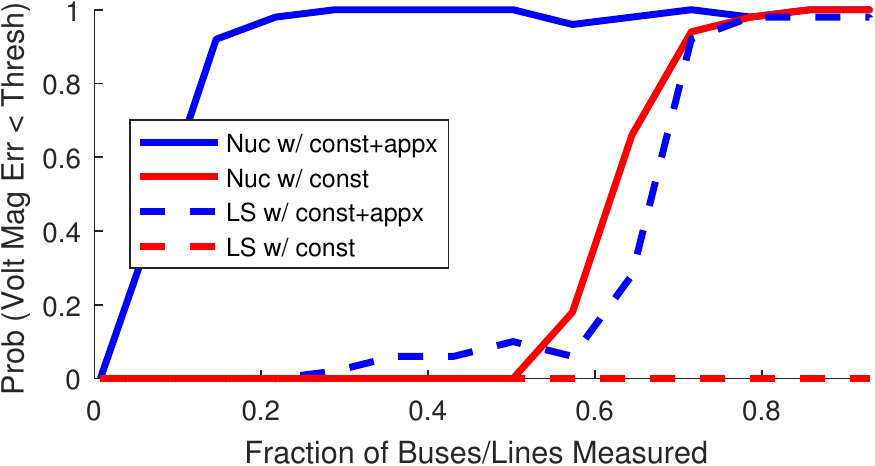}\label{fig:case141_sim03_bus_volterr_mag_prob_v_samp}}
    \subfigure[Voltage Angle]{\includegraphics[width=0.9\columnwidth]{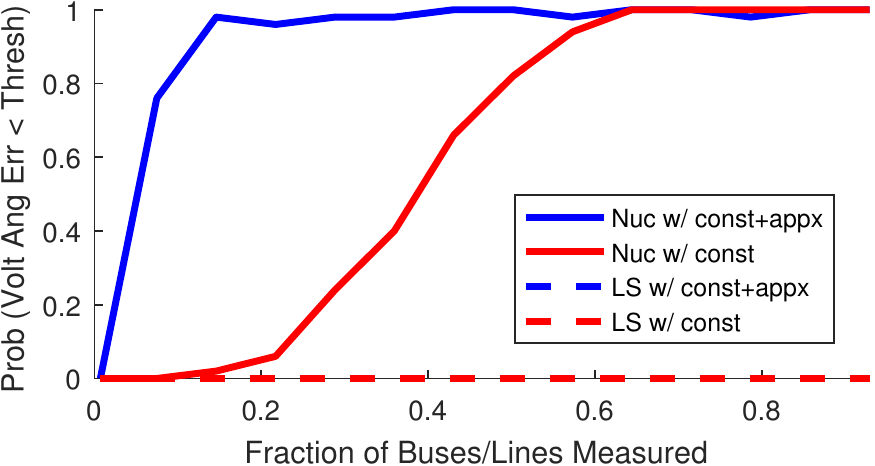}\label{fig:case141_sim03_bus_volterr_ang_prob_v_samp}}
    \caption{Probability of the estimated Voltage (a) Magnitude RMSE being below a threshold of $1\times 10^{-4}$ pu and (b) Angle RMSE being below a threshold of $5\times 10^{-5}$ degrees vs. fraction of buses and lines with observed data.}
    \label{fig:case141_sim03_bus_volterr_prob_v_samp}
\end{figure}

\begin{figure}[t]
    \subfigure[Voltage Magnitude]{\includegraphics[width=0.9\columnwidth]{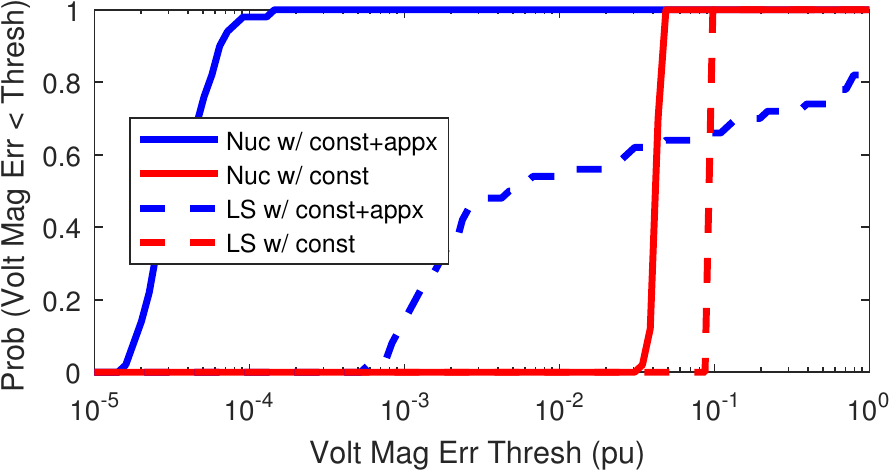}\label{fig:case141_sim03_bus_volterr_mag_prob_v_thresh}}
    \subfigure[Voltage Angle]{\includegraphics[width=0.9\columnwidth]{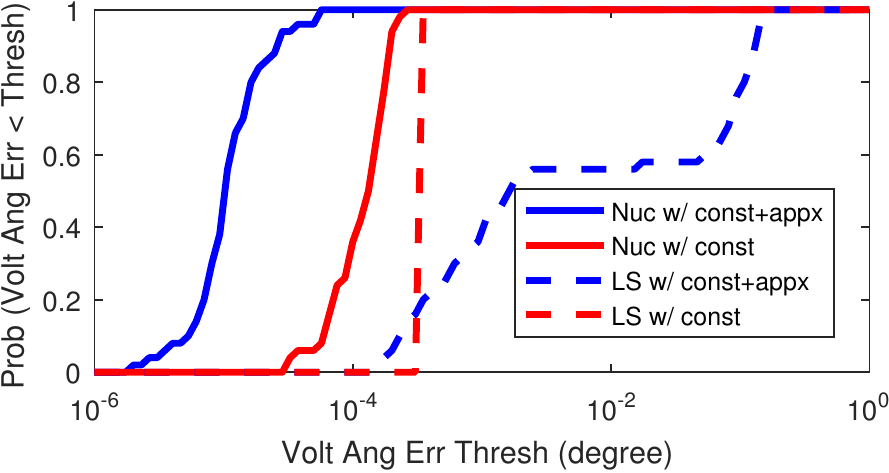}\label{fig:case141_sim03_bus_volterr_ang_prob_v_thresh}}
    \caption{Cumulative Distribution Functions of unmeasured voltage (a) magnitude and (b) angle estimation RMSE of the trials tested with 22\% of the buses and lines being measured.}
    \label{fig:case141_sim03_bus_volterr_prob_v_thresh}
\end{figure}

\begin{figure}[t]
    \subfigure[Voltage Magnitude]{\includegraphics[width=0.9\columnwidth]{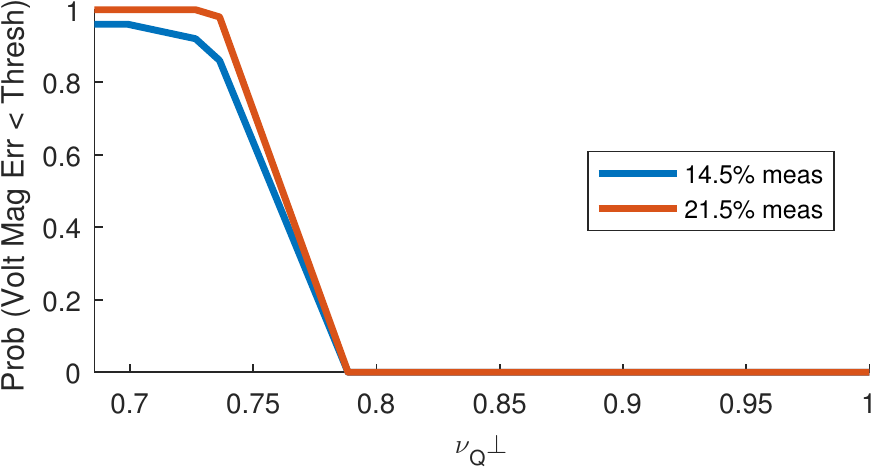}\label{fig:case141_sim03_bus_volterr_mag_prob_v_muA}}
    \subfigure[Voltage Angle]{\includegraphics[width=0.9\columnwidth]{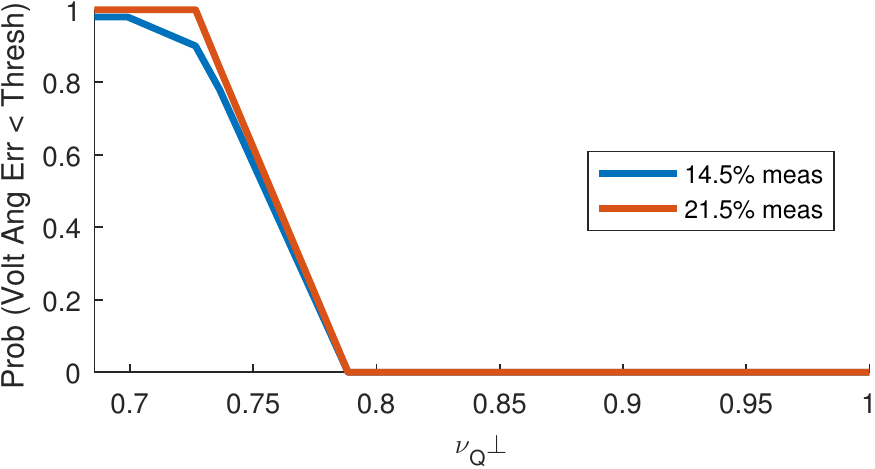}\label{fig:case141_sim03_bus_volterr_ang_prob_v_muA}}
    \caption{Probability of the estimated Voltage (a) Magnitude RMSE being below a threshold of $1\times 10^{-4}$ pu and (b) Angle RMSE being below a threshold of $5\times 10^{-5}$ degrees vs. $\nu_{\mathcal{Q}^\perp}$.}
    \label{fig:case141_sim03_bus_volterr_prob_v_muA}
\end{figure}

\subsubsection{Results}


To see the how the sample size affects the accuracy of the estimated voltages, we set RMSE thresholds and then counted the fraction of trials tested for each sample size that had RMSEs lower than the threshold.
Figure \ref{fig:case141_sim03_bus_volterr_prob_v_samp} shows the results for error thresholds of $1\times 10^{-4}$ pu and $5\times 10^{-5}$ degrees for voltage magnitude and voltage angle, respectively.
From these plots, we can make two strong obesrvations.
The first is that the Nuclear Norm method almost always has a higher probability of being more accurate than the Least Squares method for all sample sizes.
The second is that the linear approximation equations \eqref{eq:lin_approx_const} greatly improve the accuracy of the Nuclear Norm Minimization method to the point that even with only 20\% of the buses and lines measured, the unmeasured voltages have over a 90\% probability of having their average error be below $1\times 10^{-4}$ pu and $5\times 10^{-5}$ degrees.


To investigate deeper into the estimation error for a low-observability scenario, the estimation error Cumulative Distribution Functions (CDFs) for voltage magnitude and voltage angle are plotted in Figure \ref{fig:case141_sim03_bus_volterr_prob_v_thresh} when only 22\% of the buses and lines are measured.
They again show that the Least Squares method has magnitudes of greater error than that of the Nuclear Norm method.
However, it is interesting to note that with only the linear power system model equations \eqref{eq:syseqn_line_pwrcons}-\eqref{eq:syseqn_ohms_law}, the Nuclear Norm method and Least Squares method are within the same magnitude of error with Least Squares still being much more inaccurate.


To see how the value of \fixes{$\nu_{\mathcal{Q}^\perp}$} affects the state-estimation, we randomly deleted constraints from the ``const+appx" set and solved Problem \eqref{eq:nuc_norm_prob} while measuring \fixes{$\nu_{\mathcal{Q}^\perp}$.}
Figure \ref{fig:case141_sim03_bus_volterr_prob_v_muA} shows the results for the same error thresholds as before with two different sample sizes of 14.5\% and \fixes{21.5\%} of the buses and lines.
As constraints are added, the value of \fixes{$\nu_{\mathcal{Q}^\perp}$} decreases.
We can observe a threshold value of \fixes{$\nu_{\mathcal{Q}^\perp}$} at 0.79 before the added constraints help to increase the probability to have small error.
This gives evidence to the idea that the constraint set must achieve a small enough \fixes{$\nu_{\mathcal{Q}^\perp}$} before it can be fully utilized with a small sample size.

\section{Related Work}
In traditional state estimation, the focus is mainly on transmission networks that have an abundance of measurement equipment so that the focus is on how to remove bad data using weighted least-squares techniques~\cite{monticelli2000electric}.
Alternating direction method of multipliers was used to distribute the state estimation problem over control areas for a transmission system~\cite{kekatos2013distributed}.
A statistical method of adjusting an interpolation matrix that is used for estimating a dynamically changing state in PMU-unobservable areas is proposed by \cite{zhao2016power}.
For distribution networks that are measurement poor, these techniques cannot be used since they require full observability.
Much of the interest has instead been on estimating the topology of the network, especially during a contingency, and changes in power loss and voltage for capacitor decisions~\cite{baran2001,dehghanpour2018survey,primadianto2017review}.
With a similar motivation to our problem \cite{barukvcic2017evolutionary} uses an evolutionary optimization approach to estimate all of the voltages in a radial distribution network with as few measurements as possible.

Matrix Completion has only been recently considered for power system state estimation with the use of PMUs~\cite{wang2015low}.
The structure of our problem mainly follows that of \cite{donti2019matrix} to take advantage of power system physics to add information.
While we focus our analysis on a more theoretical perspective of sample complexity, \cite{donti2019matrix} uses a detailed simulations to measure estimation errors under different low-observablility scenarios.
The work of \cite{wang2015low,gao2016missing,genes2016recovering,genes2018robust} focuses on the time correlation of a single state variable type by using one of the matrix dimensions to represent time, as compared to the state variable type in our problem and \cite{donti2019matrix} to the focus on the correlation between state variable types at a single moment.

Much of our work in this paper borrowed the low-rank matrix completion theoretical framework from \cite{candes2009exact} which used the Bernoulli sampling model to bound the failure probability for uniform sampling.
Also based on this framework, \cite{recht2011simpler} proved a slightly different sample complexity bound with a more simple proof using uniform sampling with replacement instead of Bernoulli sample to bound the failure probability.
Linear equality constraints were used to convey information for matrix completion in \cite{recht2010guaranteed} instead of sampling.
However, compared to our problem, theirs modeled the constraints themselves as random instead of as a permanent feature of the matrix being completed.
In an environment with measurement errors, \cite{candes2011robust} uses a nuclear norm and L1-norm minimization problem to robustly decompose a measurement matrix with missing measurements into a low-rank matrix with the wanted information and a sparse matrix with the errors.
For the objective to deal separate the measurement matrix into low-rank and sparse matrices, \cite{mardani2013decentralized} replaces the nuclear norm with the Frobenius norm so that the optimization problem can be solved in a distributed manner.

\section{Conclusion}

In this paper, we develop a method for distribution network state estimation which has the characteristic of being \emph{underdetermined} as opposed to the traditional overdetermined state estimation problem found in transmission networks.
The method was adapted from low-rank matrix completion techniques but has the added benefit of taking advantage of the physical properties that govern a power system to reduce the number of samples needed to estimate the state.
Six case studies of distribution networks show that almost all of the state information can be encoded in a low-rank matrix.
The sample complexity for high-probability exact matrix completion was proved for the constrained matrix completion problem using nuclear norm minimization.
This shows how the additional information obtained from linear equality constraints can reduce the number of samples needed to exactly recover the underlying matrix.
The method was tested on a 141 bus distribution network test case and shows that the estimation error for voltage magnitude and angle at each bus can be significantly reduced with linear power system equations and linearized power system approximation equations.

There are three significant future research directions that can be taken from this paper.
First, this paper assumes that there is no error in the measurements made at the buses and lines and so incorporating measurement error management into the method is an important direction that could be done.
Second, state estimation for distribution networks would be continually repeated over time; thus, incorporating recent past measurements and estimations have potential to improve their accuracy.
Third, developing distributed algorithms for the constraint matrix completion is of interest.


\section{Acknowledgements}
\fixes{This work was authored in part by the National Renewable Energy Laboratory, managed and operated by Alliance for Sustainable Energy, LLC, for the U.S. Department of Energy (DOE) under Contract No. DE-AC36-
08GO28308. This work was supported in part by the Laboratory Directed Research and Development (LDRD) Program at NREL. The views expressed in the article do not necessarily represent the views of the DOE or the U.S. Government. The U.S. Government retains and the publisher, by accepting the article for publication, acknowledges that the U.S. Government retains a nonexclusive, paid-up, irrevocable, worldwide license to publish or reproduce the published form of this work, or allow others to do so, for
U.S. Government purposes. This work was also partially funded by NSF grants CNS-1464388, CNS-1617698, CNS-1730128, CNS-1717558, DGE-1633299.}

\bibliographystyle{IEEEtran}
\bibliography{references}

\begin{thebibliography}{10}
\providecommand{\url}[1]{#1}
\csname url@samestyle\endcsname
\providecommand{\newblock}{\relax}
\providecommand{\bibinfo}[2]{#2}
\providecommand{\BIBentrySTDinterwordspacing}{\spaceskip=0pt\relax}
\providecommand{\BIBentryALTinterwordstretchfactor}{4}
\providecommand{\BIBentryALTinterwordspacing}{\spaceskip=\fontdimen2\font plus
\BIBentryALTinterwordstretchfactor\fontdimen3\font minus
  \fontdimen4\font\relax}
\providecommand{\BIBforeignlanguage}[2]{{%
\expandafter\ifx\csname l@#1\endcsname\relax
\typeout{** WARNING: IEEEtran.bst: No hyphenation pattern has been}%
\typeout{** loaded for the language `#1'. Using the pattern for}%
\typeout{** the default language instead.}%
\else
\language=\csname l@#1\endcsname
\fi
#2}}
\providecommand{\BIBdecl}{\relax}
\BIBdecl

\bibitem{grady1992application}
W.~Grady, M.~Samotyj, and A.~Noyola, ``The application of network objective
  functions for actively minimizing the impact of voltage harmonics in power
  systems,'' \emph{IEEE transactions on power delivery}, vol.~7, no.~3, pp.
  1379--1386, 1992.

\bibitem{raju2012direct}
M.~R. Raju, K.~R. Murthy, and K.~Ravindra, ``Direct search algorithm for
  capacitive compensation in radial distribution systems,'' \emph{International
  Journal of Electrical Power \& Energy Systems}, vol.~42, no.~1, pp. 24--30,
  2012.

\bibitem{baran1989network}
M.~E. Baran and F.~F. Wu, ``Network reconfiguration in distribution systems for
  loss reduction and load balancing,'' \emph{IEEE Transactions on Power
  delivery}, vol.~4, no.~2, pp. 1401--1407, 1989.

\bibitem{das2008optimal}
D.~Das, ``Optimal placement of capacitors in radial distribution system using a
  fuzzy-ga method,'' \emph{International Journal of Electrical Power \& Energy
  Systems}, vol.~30, no. 6-7, pp. 361--367, 2008.

\bibitem{das1995simple}
D.~Das, D.~Kothari, and A.~Kalam, ``Simple and efficient method for load flow
  solution of radial distribution networks,'' \emph{International Journal of
  Electrical Power \& Energy Systems}, vol.~17, no.~5, pp. 335--346, 1995.

\bibitem{khodr2008maximum}
H.~Khodr, F.~Olsina, P.~De~Oliveira-De~Jesus, and J.~Yusta, ``Maximum savings
  approach for location and sizing of capacitors in distribution systems,''
  \emph{Electric Power Systems Research}, vol.~78, no.~7, pp. 1192--1203, 2008.

\bibitem{liacco1982role}
T.~D. Liacco, ``The role of state estimation in power system operation,''
  \emph{IFAC Proceedings Volumes}, vol.~15, no.~4, pp. 1531--1533, 1982.

\bibitem{Abur2004}
A.~Abur and A.~Gomez-Exposito, \emph{Power System State Estimation: Theory and
  Implementation}.\hskip 1em plus 0.5em minus 0.4em\relax CRC press, 2004.

\bibitem{meliopoulos2001power}
A.~Meliopoulos, B.~Fardanesh, and S.~Zelingher, ``Power system state
  estimation: modeling error effects and impact on system operation,'' in
  \emph{hicss}.\hskip 1em plus 0.5em minus 0.4em\relax IEEE, 2001, p. 2012.

\bibitem{dehghanpour2018survey}
K.~Dehghanpour, Z.~Wang, J.~Wang, Y.~Yuan, and F.~Bu, ``A survey on state
  estimation techniques and challenges in smart distribution systems,''
  \emph{IEEE Transactions on Smart Grid}, 2018.

\bibitem{sexauer2013phasor}
J.~Sexauer, P.~Javanbakht, and S.~Mohagheghi, ``Phasor measurement units for
  the distribution grid: Necessity and benefits,'' in \emph{Innovative Smart
  Grid Technologies (ISGT), 2013 IEEE PES}.\hskip 1em plus 0.5em minus
  0.4em\relax IEEE, 2013, pp. 1--6.

\bibitem{bolognani2014state}
S.~Bolognani, R.~Carli, and M.~Todescato, ``State estimation in power
  distribution networks with poorly synchronized measurements,'' in
  \emph{Decision and Control (CDC), 2014 IEEE 53rd Annual Conference on}.\hskip
  1em plus 0.5em minus 0.4em\relax IEEE, 2014, pp. 2579--2584.

\bibitem{baran2001}
M.~E. Baran, ``Challenges in state estimation on distribution systems,'' in
  \emph{2001 Power Engineering Society Summer Meeting. Conference Proceedings
  (Cat. No.01CH37262)}, vol.~1, July 2001, pp. 429--433 vol.1.

\bibitem{della2014electrical}
D.~Della~Giustina, M.~Pau, P.~A. Pegoraro, F.~Ponci, and S.~Sulis, ``Electrical
  distribution system state estimation: measurement issues and challenges,''
  \emph{IEEE Instrumentation \& Measurement Magazine}, vol.~17, no.~6, pp.
  36--42, 2014.

\bibitem{ochoa2010distribution}
L.~F. Ochoa, C.~J. Dent, G.~P. Harrison \emph{et~al.}, ``Distribution network
  capacity assessment: Variable dg and active networks,'' \emph{IEEE
  Transactions on Power Systems}, vol.~25, no.~1, pp. 87--95, 2010.

\bibitem{deng2002branch}
Y.~Deng, Y.~He, and B.~Zhang, ``A branch-estimation-based state estimation
  method for radial distribution systems,'' \emph{IEEE Transactions on power
  delivery}, vol.~17, no.~4, pp. 1057--1062, 2002.

\bibitem{pereira2004integrated}
J.~Pereira, J.~Saraiva, and V.~Miranda, ``An integrated load allocation/state
  estimation approach for distribution networks,'' in \emph{Probabilistic
  Methods Applied to Power Systems, 2004 International Conference on}.\hskip
  1em plus 0.5em minus 0.4em\relax IEEE, 2004, pp. 180--185.

\bibitem{driesen2008design}
J.~Driesen and F.~Katiraei, ``Design for distributed energy resources,''
  \emph{IEEE Power and Energy Magazine}, vol.~6, no.~3, 2008.

\bibitem{kulmala2014coordinated}
A.~Kulmala, S.~Repo, and P.~J{\"a}rventausta, ``Coordinated voltage control in
  distribution networks including several distributed energy resources,''
  \emph{IEEE Transactions on Smart Grid}, vol.~5, no.~4, pp. 2010--2020, 2014.

\bibitem{syahputra2014optimal}
R.~Syahputra, I.~Robandi, and M.~Ashari, ``Optimal distribution network
  reconfiguration with penetration of distributed energy resources,'' in
  \emph{Information Technology, Computer and Electrical Engineering (ICITACEE),
  2014 1st International Conference on}.\hskip 1em plus 0.5em minus 0.4em\relax
  IEEE, 2014, pp. 388--393.

\bibitem{song2013operation}
I.-K. Song, W.-W. Jung, J.-Y. Kim, S.-Y. Yun, J.-H. Choi, and S.-J. Ahn,
  ``Operation schemes of smart distribution networks with distributed energy
  resources for loss reduction and service restoration,'' \emph{IEEE
  Transactions on Smart Grid}, vol.~4, no.~1, pp. 367--374, 2013.

\bibitem{katiraei2011solar}
F.~Katiraei and J.~R. Aguero, ``Solar pv integration challenges,'' \emph{IEEE
  Power and Energy Magazine}, vol.~9, no.~3, pp. 62--71, 2011.

\bibitem{inman2013solar}
R.~H. Inman, H.~T. Pedro, and C.~F. Coimbra, ``Solar forecasting methods for
  renewable energy integration,'' \emph{Progress in energy and combustion
  science}, vol.~39, no.~6, pp. 535--576, 2013.

\bibitem{hill2012battery}
C.~A. Hill, M.~C. Such, D.~Chen, J.~Gonzalez, and W.~M. Grady, ``Battery energy
  storage for enabling integration of distributed solar power generation,''
  \emph{IEEE Transactions on smart grid}, vol.~3, no.~2, pp. 850--857, 2012.

\bibitem{energy2010western}
G.~Energy, ``Western wind and solar integration study,'' Citeseer, Tech. Rep.,
  2010.

\bibitem{huang2012state}
Y.-F. Huang, S.~Werner, J.~Huang, N.~Kashyap, and V.~Gupta, ``State estimation
  in electric power grids: Meeting new challenges presented by the requirements
  of the future grid,'' \emph{IEEE Signal Processing Magazine}, vol.~29, no.~5,
  pp. 33--43, 2012.

\bibitem{alarcon2010multi}
A.~Alarcon-Rodriguez, G.~Ault, and S.~Galloway, ``Multi-objective planning of
  distributed energy resources: A review of the state-of-the-art,''
  \emph{Renewable and Sustainable Energy Reviews}, vol.~14, no.~5, pp.
  1353--1366, 2010.

\bibitem{basak2012literature}
P.~Basak, S.~Chowdhury, S.~H. nee Dey, and S.~Chowdhury, ``A literature review
  on integration of distributed energy resources in the perspective of control,
  protection and stability of microgrid,'' \emph{Renewable and Sustainable
  Energy Reviews}, vol.~16, no.~8, pp. 5545--5556, 2012.

\bibitem{baran95}
M.~E. Baran, J.~Zhu, and A.~W. Kelley, ``Meter placement for real-time
  monitoring of distribution feeders,'' in \emph{Proceedings of Power Industry
  Computer Applications Conference}, May 1995, pp. 228--233.

\bibitem{singh2009}
R.~Singh, B.~C. Pal, and R.~B. Vinter, ``Measurement placement in distribution
  system state estimation,'' \emph{IEEE Transactions on Power Systems},
  vol.~24, no.~2, pp. 668--675, May 2009.

\bibitem{manitsas2012}
E.~Manitsas, R.~Singh, B.~C. Pal, and G.~Strbac, ``Distribution system state
  estimation using an artificial neural network approach for pseudo measurement
  modeling,'' \emph{IEEE Transactions on Power Systems}, vol.~27, no.~4, pp.
  1888--1896, Nov 2012.

\bibitem{wu2013}
J.~Wu, Y.~He, and N.~Jenkins, ``A robust state estimator for medium voltage
  distribution networks,'' \emph{IEEE Transactions on Power Systems}, vol.~28,
  no.~2, pp. 1008--1016, May 2013.

\bibitem{bhela2018}
S.~Bhela, V.~Kekatos, and S.~Veeramachaneni, ``Enhancing observability in
  distribution grids using smart meter data,'' \emph{IEEE Transactions on Smart
  Grid}, vol.~9, no.~6, pp. 5953--5961, 2018.

\bibitem{donti2019matrix}
P.~L. Donti, Y.~Liu, A.~J. Schmitt, A.~Bernstein, and Y.~Zhang, ``Matrix
  completion for low-observability voltage estimation,'' \emph{arXiv preprint
  arXiv:1801.09799}, 2019.

\bibitem{candes2009exact}
E.~J. Cand{\`e}s and B.~Recht, ``Exact matrix completion via convex
  optimization,'' \emph{Foundations of Computational mathematics}, vol.~9,
  no.~6, p. 717, 2009.

\bibitem{simchowitz18a}
\BIBentryALTinterwordspacing
M.~Simchowitz, H.~Mania, S.~Tu, M.~I. Jordan, and B.~Recht, ``Learning without
  mixing: Towards a sharp analysis of linear system identification,'' in
  \emph{Proceedings of the 31st Conference On Learning Theory}, ser.
  Proceedings of Machine Learning Research, S.~Bubeck, V.~Perchet, and
  P.~Rigollet, Eds., vol.~75.\hskip 1em plus 0.5em minus 0.4em\relax PMLR,
  06--09 Jul 2018, pp. 439--473. [Online]. Available:
  \url{http://proceedings.mlr.press/v75/simchowitz18a.html}
\BIBentrySTDinterwordspacing

\bibitem{rantzer2018}
A.~Rantzer, ``Concentration bounds for single parameter adaptive control,'' in
  \emph{2018 Annual American Control Conference (ACC)}, June 2018, pp.
  1862--1866.

\bibitem{candes2010matrix}
E.~J. Candes and Y.~Plan, ``Matrix completion with noise,'' \emph{Proceedings
  of the IEEE}, vol.~98, no.~6, pp. 925--936, 2010.

\bibitem{abdi2010principal}
H.~Abdi and L.~J. Williams, ``Principal component analysis,'' \emph{Wiley
  interdisciplinary reviews: computational statistics}, vol.~2, no.~4, pp.
  433--459, 2010.

\bibitem{zimmerman2011matpower}
R.~D. Zimmerman, C.~E. Murillo-S{\'a}nchez, R.~J. Thomas \emph{et~al.},
  ``Matpower: Steady-state operations, planning, and analysis tools for power
  systems research and education,'' \emph{IEEE Transactions on power systems},
  vol.~26, no.~1, pp. 12--19, 2011.

\bibitem{kersting1991radial}
W.~H. Kersting, ``Radial distribution test feeders,'' \emph{IEEE Transactions
  on Power Systems}, vol.~6, no.~3, pp. 975--985, 1991.

\bibitem{baran1989optimal}
M.~Baran and F.~F. Wu, ``Optimal sizing of capacitors placed on a radial
  distribution system,'' \emph{IEEE Transactions on power Delivery}, vol.~4,
  no.~1, pp. 735--743, 1989.

\bibitem{fazel2002matrix}
M.~Fazel, ``Matrix rank minimization with applications,'' Ph.D. dissertation,
  PhD thesis, Stanford University, 2002.

\bibitem{candes2006robust}
E.~J. Cand{\`e}s, J.~Romberg, and T.~Tao, ``Robust uncertainty principles:
  Exact signal reconstruction from highly incomplete frequency information,''
  \emph{IEEE Transactions on information theory}, vol.~52, no.~2, pp. 489--509,
  2006.

\bibitem{monticelli2000electric}
A.~Monticelli, ``Electric power system state estimation,'' \emph{Proceedings of
  the IEEE}, vol.~88, no.~2, pp. 262--282, 2000.

\bibitem{kekatos2013distributed}
V.~Kekatos and G.~B. Giannakis, ``Distributed robust power system state
  estimation,'' \emph{IEEE Transactions on Power Systems}, vol.~28, no.~2, pp.
  1617--1626, 2013.

\bibitem{zhao2016power}
J.~Zhao, G.~Zhang, K.~Das, G.~N. Korres, N.~M. Manousakis, A.~K. Sinha, and
  Z.~He, ``Power system real-time monitoring by using pmu-based robust state
  estimation method,'' \emph{IEEE Transactions on Smart Grid}, vol.~7, no.~1,
  pp. 300--309, 2016.

\bibitem{primadianto2017review}
A.~Primadianto and C.-N. Lu, ``A review on distribution system state
  estimation,'' \emph{IEEE Transactions on Power Systems}, vol.~32, no.~5, pp.
  3875--3883, 2017.

\bibitem{barukvcic2017evolutionary}
M.~Baruk{\v{c}}i{\'c}, M.~Vukobratovi{\'c}, D.~Masle, D.~Bulji{\'c}, and
  {\v{Z}}.~Herderi{\'c}, ``The evolutionary optimization approach for voltage
  profile estimation in a radial distribution network with a decreased number
  of measurements,'' in \emph{Electrical Machines, Drives and Power Systems
  (ELMA), 2017 15th International Conference on}.\hskip 1em plus 0.5em minus
  0.4em\relax IEEE, 2017, pp. 26--31.

\bibitem{wang2015low}
M.~Wang, J.~H. Chow, P.~Gao, X.~T. Jiang, Y.~Xia, S.~G. Ghiocel, B.~Fardanesh,
  G.~Stefopolous, Y.~Kokai, N.~Saito \emph{et~al.}, ``A low-rank matrix
  approach for the analysis of large amounts of power system synchrophasor
  data,'' in \emph{System Sciences (HICSS), 2015 48th Hawaii International
  Conference on}.\hskip 1em plus 0.5em minus 0.4em\relax IEEE, 2015, pp.
  2637--2644.

\bibitem{gao2016missing}
P.~Gao, M.~Wang, S.~G. Ghiocel, J.~H. Chow, B.~Fardanesh, and G.~Stefopoulos,
  ``Missing data recovery by exploiting low-dimensionality in power system
  synchrophasor measurements,'' \emph{IEEE Transactions on Power Systems},
  vol.~31, no.~2, pp. 1006--1013, 2016.

\bibitem{genes2016recovering}
C.~Genes, I.~Esnaola, S.~M. Perlaza, L.~F. Ochoa, and D.~Coca, ``Recovering
  missing data via matrix completion in electricity distribution systems,'' in
  \emph{Signal Processing Advances in Wireless Communications (SPAWC), 2016
  IEEE 17th International Workshop on}.\hskip 1em plus 0.5em minus 0.4em\relax
  IEEE, 2016, pp. 1--6.

\bibitem{genes2018robust}
------, ``Robust recovery of missing data in electricity distribution
  systems,'' \emph{IEEE Transactions on Smart Grid}, 2018.

\bibitem{recht2011simpler}
B.~Recht, ``A simpler approach to matrix completion,'' \emph{Journal of Machine
  Learning Research}, vol.~12, no. Dec, pp. 3413--3430, 2011.

\bibitem{recht2010guaranteed}
B.~Recht, M.~Fazel, and P.~A. Parrilo, ``Guaranteed minimum-rank solutions of
  linear matrix equations via nuclear norm minimization,'' \emph{SIAM review},
  vol.~52, no.~3, pp. 471--501, 2010.

\bibitem{candes2011robust}
E.~J. Cand{\`e}s, X.~Li, Y.~Ma, and J.~Wright, ``Robust principal component
  analysis?'' \emph{Journal of the ACM (JACM)}, vol.~58, no.~3, p.~11, 2011.

\bibitem{mardani2013decentralized}
M.~Mardani, G.~Mateos, and G.~B. Giannakis, ``Decentralized
  sparsity-regularized rank minimization: Algorithms and applications,''
  \emph{IEEE Transactions on Signal Processing}, vol.~61, no.~21, pp.
  5374--5388, 2013.

\bibitem{talagrand1996new}
M.~Talagrand, ``New concentration inequalities in product spaces,''
  \emph{Inventiones mathematicae}, vol. 126, no.~3, pp. 505--563, 1996.

\bibitem{candes2007sparsity}
E.~Candes and J.~Romberg, ``Sparsity and incoherence in compressive sampling,''
  \emph{Inverse problems}, vol.~23, no.~3, p. 969, 2007.

\bibitem{buchholz2001operator}
A.~Buchholz, ``Operator khintchine inequality in non-commutative probability,''
  \emph{Mathematische Annalen}, vol. 319, no.~1, pp. 1--16, 2001.

\end{thebibliography}

\appendix
\subsection{Proofs}
\label{sec:proofs}
The follow lemmas will be useful in proving the main lemmas that make up Theorem \ref{thm:samp_complex}.


\begin{lemma}
    \label{thm:bound_supremum}
    \fixes{Let $Y_1,\dots,Y_n$ be a sequence of independent random variables from a Banach space, and let $\mathcal{F}$ be a countable family of real valued functions such that if $f\in\mathcal{F}$ then $-f\in\mathcal{F}$.
    Define a specific value resulting from the sequence as
    \begin{align}
        Y_\text{sup}:= \sup_{f\in\mathcal{F}} \sum_{i=1}^n f(Y_i). \nonumber
    \end{align}
    If $|f|\leq B$ for positive constant $B$ and $\mathbb{E}(f(Y_i))=0$ for all $f\in\mathcal{F}$ and $i\in\{1,\dots,n\}$, then for any $t\geq 0$
    \begin{align}
        \mathbb{P}(|Y_\text{sup}-\mathbb{E}(Y_\text{sup})| > t) \quad\quad\quad\quad\quad\quad\quad\quad\quad\quad\quad\quad\quad\quad \nonumber \\ \leq 3 \exp\left(-\frac{t}{KB}\log\left(1+\frac{Bt}{\sigma^2+B\mathbb{E}(Y_\text{sup})}\right)\right) \nonumber
    \end{align}
    where $K$ is a numerical constant and $\sigma^2:=\sup_{f\in\mathcal{F}} \sum_{i=1}^n \mathbb{E}((f(Y_i))^2)$.}
\end{lemma}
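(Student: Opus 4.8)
The plan is to recognize this statement as a restatement of Talagrand's concentration inequality for the supremum of an empirical process, in the \emph{Bennett} (rather than Gaussian) form: the factor $\log(1 + Bt/(\sigma^2 + B\mathbb{E}(Y_\text{sup})))$ is the tell-tale Legendre transform of a Poisson-type log-moment generating function. Since this is a classical result (Talagrand; see also the entropy-method treatments of Ledoux, Massart, Bousquet, and Rio, which is the flavor of bound used throughout the matrix-completion analysis of \cite{candes2009exact}), the shortest route is to invoke it directly. To derive it from scratch, I would use the \emph{entropy method} (tensorization of entropy together with the Herbst argument), because that route produces precisely the Bennett-type tail with the variance proxy $\sigma^2 + B\mathbb{E}(Y_\text{sup})$.

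Concretely, write $Z := Y_\text{sup}$. The closure of $\mathcal{F}$ under negation is used first: it forces $Z \geq 0$ and makes $Z$ behave like a supremum of absolute values, which is what lets a one-sided estimate be promoted to the two-sided statement. I would then fix $\lambda > 0$ and study the centered cumulant generating function $H(\lambda) := \log \mathbb{E}\,e^{\lambda(Z - \mathbb{E}Z)}$. Tensorization of entropy gives
\begin{align}
    \text{Ent}\!\left(e^{\lambda Z}\right) \leq \sum_{i=1}^n \mathbb{E}\left[\text{Ent}^{(i)}\!\left(e^{\lambda Z}\right)\right], \nonumber
\end{align}
where $\text{Ent}^{(i)}$ is the entropy in the $i$-th coordinate with the others frozen. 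Bounding each conditional entropy by resampling $Y_i$ with an independent copy and exploiting that replacing a single coordinate changes $Z$ by at most the corresponding $|f(Y_i)| \leq B$, one controls the right-hand side by a variance proxy $v$ comparable to $\sigma^2 + B\,\mathbb{E}(Z)$. Feeding this into the Herbst differential inequality and integrating yields the Bennett-type moment bound
\begin{align}
    H(\lambda) \leq \frac{v}{B^2}\left(e^{\lambda B} - \lambda B - 1\right). \nonumber
\end{align}
A Chernoff bound $\mathbb{P}(Z - \mathbb{E}Z > t) \leq \exp(-\sup_{\lambda>0}(\lambda t - H(\lambda)))$ followed by optimization over $\lambda$ then produces exactly the claimed $\exp\big(-\tfrac{t}{KB}\log(1 + \tfrac{Bt}{v})\big)$, since the Legendre transform of the Poisson-type function $e^{\lambda B} - \lambda B - 1$ is precisely of the form $\tfrac{t}{B}\log(1 + \tfrac{Bt}{v})$; all universal factors are absorbed into $K$. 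The lower tail is handled identically with $-\lambda$ (it is in fact sub-Gaussian), and summing the two one-sided estimates, together with the symmetrization afforded by negation-closure, yields the prefactor $3$.

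The hard part will be the per-coordinate entropy estimate and, in particular, pinning down the correct variance proxy. A naive argument recovers only the \emph{weak} variance $\sigma^2$, but controlling the fluctuations of a supremum requires comparing weak and strong variances, which is exactly where the mean $\mathbb{E}(Y_\text{sup})$ enters the proxy $\sigma^2 + B\,\mathbb{E}(Y_\text{sup})$; getting this comparison tight — rather than losing an extra factor of $\mathbb{E}(Z)$ or of $B$ — is the delicate Bousquet/Rio-type step. If an off-the-shelf statement is acceptable, this entire derivation collapses to a citation, and only the constant $K$ and the prefactor $3$ must be matched to the referenced theorem.
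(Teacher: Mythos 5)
Your proposal is correct and matches the paper's treatment: the paper proves this lemma purely by citation to Talagrand's concentration inequality for empirical processes \cite{talagrand1996new}, which is exactly your primary route of invoking the result off-the-shelf (your entropy-method sketch is a reasonable outline of how one would derive such a Bennett-type bound from scratch, but the paper does not attempt this). The only point to note is that the paper's cited source is Talagrand's original induction-based argument rather than the Ledoux--Massart--Bousquet--Rio entropy method you outline, but since you reduce everything to the citation anyway, this is immaterial.
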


\fixes{The proof can be found in \cite{talagrand1996new}.}


\begin{lemma}
    \label{thm:bound_expectation}
    \fixes{Suppose that if $\Omega$ is sampled according to the Bernoulli model \eqref{eq:bernoulli}, $n_1\geq n_2$, and  Assumption \ref{ass:coherence} is satisfied, then there exists a constant $C'_R$ such that
    \begin{align}
        \mathbb{E}\left(\frac{1}{p}\left\|\mathcal{P}_\mathcal{T}\left(\mathcal{P}_\Omega-p\mathbf{I}\right)\mathcal{P}_\mathcal{T}\right\|\right) \leq C'_R \sqrt{\frac{\mu_0 r n_1 \log n_1}{m}} \nonumber
    \end{align} as long as the RHS is smaller than 1.}
\end{lemma}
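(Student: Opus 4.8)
The plan is to realize the operator inside the norm as a sum of independent, mean-zero random operators and then control its expected spectral norm by the symmetrization / Rudelson selection argument, following the unconstrained analysis of \cite{candes2009exact}. First I would rewrite the operator in ``rank-one'' form. Setting $\mathbf{s}_{ij}:=\mathcal{P}_\mathcal{T}(\mathbf{e}_i\mathbf{e}_j^\intercal)$ and letting $\mathbf{s}_{ij}\otimes\mathbf{s}_{ij}$ denote the self-adjoint operator $\mathbf{X}\mapsto\langle\mathbf{s}_{ij},\mathbf{X}\rangle\mathbf{s}_{ij}$, a direct computation using the self-adjointness of $\mathcal{P}_\mathcal{T}$ gives $\mathcal{P}_\mathcal{T}\mathcal{P}_\Omega\mathcal{P}_\mathcal{T}=\sum_{i,j}\delta_{ij}\,\mathbf{s}_{ij}\otimes\mathbf{s}_{ij}$ and $p\mathcal{P}_\mathcal{T}=p\sum_{i,j}\mathbf{s}_{ij}\otimes\mathbf{s}_{ij}$, so that
\begin{align}
\mathcal{P}_\mathcal{T}(\mathcal{P}_\Omega-p\mathbf{I})\mathcal{P}_\mathcal{T}=\sum_{i,j}(\delta_{ij}-p)\,\mathbf{s}_{ij}\otimes\mathbf{s}_{ij}, \nonumber
\end{align}
a sum of independent, mean-zero terms to which concentration tools apply.

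Next I would record the coherence bound on the summands. From \eqref{eq:PT_defn} one has $\|\mathbf{s}_{ij}\|_F^2=\|\mathbf{P}_\mathcal{U}\mathbf{e}_i\|^2+\|\mathbf{P}_\mathcal{V}\mathbf{e}_j\|^2-\|\mathbf{P}_\mathcal{U}\mathbf{e}_i\|^2\|\mathbf{P}_\mathcal{V}\mathbf{e}_j\|^2$, and Assumption \ref{ass:coherence} gives $\|\mathbf{P}_\mathcal{U}\mathbf{e}_i\|^2\leq\mu_0 r/n_1$ and $\|\mathbf{P}_\mathcal{V}\mathbf{e}_j\|^2\leq\mu_0 r/n_2$; since $n_1\geq n_2$ this yields $\max_{i,j}\|\mathbf{s}_{ij}\|_F^2\leq 2\mu_0 r/n_2$, which is the single structural input the probabilistic argument needs.

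Then I would run the self-bounding argument. Write $\Delta:=\mathbb{E}\|\mathcal{P}_\mathcal{T}(\mathcal{P}_\Omega-p\mathbf{I})\mathcal{P}_\mathcal{T}\|$. Standard symmetrization of $\sum_{ij}(\delta_{ij}\mathbf{s}_{ij}\otimes\mathbf{s}_{ij}-\mathbb{E}[\,\cdot\,])$ introduces Rademacher signs $\epsilon_{ij}$ at the cost of a factor $2$, and Rudelson's selection lemma (equivalently an operator-valued Khintchine inequality) applied conditionally on $\{\delta_{ij}\}$ bounds the Rademacher average of the rank-one operators by $C_0\sqrt{\log(n_1 n_2)}\,(\max_{ij}\|\mathbf{s}_{ij}\|_F)\,\|\sum_{ij}\delta_{ij}\mathbf{s}_{ij}\otimes\mathbf{s}_{ij}\|^{1/2}$. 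Taking total expectation, invoking Jensen, using $\|\sum_{ij}\delta_{ij}\mathbf{s}_{ij}\otimes\mathbf{s}_{ij}\|=\|\mathcal{P}_\mathcal{T}\mathcal{P}_\Omega\mathcal{P}_\mathcal{T}\|\leq\Delta+p$, together with the coherence bound and $\log(n_1 n_2)\leq 2\log n_1$, produces the self-referential inequality
\begin{align}
\Delta\leq a\sqrt{\Delta+p},\qquad a:=C_1\sqrt{\tfrac{\mu_0 r\log n_1}{n_2}}. \nonumber
\end{align}
Solving this quadratic gives $\Delta/p\leq \tfrac12 b^2+\tfrac{b}{2}\sqrt{b^2+4}$ with $b:=a/\sqrt p=C_1\sqrt{\mu_0 r n_1\log n_1/m}$ (using $n_2 p=m/n_1$). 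Under the hypothesis that the claimed right-hand side is below $1$ we have $b\lesssim 1$, so the quadratic term is absorbed into the linear one and $\Delta/p\leq C'_R\sqrt{\mu_0 r n_1\log n_1/m}$ follows for a suitable numerical constant $C'_R$.

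The main obstacle is the conditional application of Rudelson's selection lemma, which supplies both the crucial $\sqrt{\log n_1}$ factor and the square-root dependence on $\|\mathcal{P}_\mathcal{T}\mathcal{P}_\Omega\mathcal{P}_\mathcal{T}\|$ that makes the self-bounding step go through; the remaining manipulations are bookkeeping. A secondary point requiring care is that this square root forces one to solve a quadratic rather than read off a linear bound, and it is precisely at this step that the stated ``as long as the right-hand side is smaller than $1$'' proviso is invoked, in order to discard the lower-order term $\tfrac12 b^2$.
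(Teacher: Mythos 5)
Your proposal is correct, but note that the paper itself gives no proof of this lemma at all --- it simply defers to the cited reference (\cite{candes2007sparsity}, i.e.\ the machinery behind Theorem~4.1 of \cite{candes2009exact}). Your reconstruction --- writing $\mathcal{P}_\mathcal{T}(\mathcal{P}_\Omega-p\mathbf{I})\mathcal{P}_\mathcal{T}$ as a sum of independent mean-zero rank-one operators, the coherence bound $\|\mathcal{P}_\mathcal{T}(\mathbf{e}_i\mathbf{e}_j^\intercal)\|_F^2\leq 2\mu_0 r/n_2$, symmetrization followed by Rudelson's selection lemma, and the self-bounding quadratic $\Delta\leq a\sqrt{\Delta+p}$ resolved using the ``RHS smaller than $1$'' proviso --- is exactly the argument of that cited proof, so it matches the paper's (implicit) approach.
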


\fixes{The proof can be found in \cite{candes2007sparsity}.}


\begin{lemma}
    \label{thm:bound_opnormH}
    Suppose that if $\Omega$ is sampled according to the Bernoulli model \eqref{eq:bernoulli}, $n_1\geq n_2$, and Assumption \ref{ass:coherence} is satisfied, then there is a numerical constant $C_R$ such that for all $\beta>1$,
    \begin{align}
        \frac{1}{p\fixes{+q}}\left\|\mathcal{P}_\mathcal{T}\left(\mathcal{P}_\Omega\fixes{+q\mathcal{P}_\mathcal{Q}}\right)\mathcal{P}_\mathcal{T}-(p\fixes{+q})\mathcal{P}_\mathcal{T}\right\| \quad\quad\quad \quad\quad\quad \nonumber \\
        \leq \frac{C_R \sqrt{\mu_0\beta rn_1\log n_1}\fixes{+n_1n_2\sqrt{\mu_{\mathcal{Q}^\perp}\mu_0q}}}{\sqrt{m\fixes{+qn_1n_2}}} \nonumber
    \end{align}
    with probability at least $1-3n_1^{-\beta}$ as long as
    \begin{align}
        C_R \sqrt{\frac{\mu_0\beta rn_1\log n_1}{m\fixes{+qn_1n_2}}}<1 \quad \fixes{\text{and} \quad \frac{\mu_{\mathcal{Q}^\perp}\mu_0qn_1^2n_2^2}{m+qn_1n_2}\leq 1.} \nonumber
    \end{align}
\end{lemma}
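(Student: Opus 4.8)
The plan is to adapt the operator-norm concentration argument of \cite{candes2009exact}, treating the contribution of the fixed constraint projection $\mathcal{P}_\mathcal{Q}$ as a purely deterministic bias that is added to the usual random fluctuation. First I would split the operator into a mean-zero random part and a deterministic part. Using $\mathcal{P}_\mathcal{T}\mathcal{P}_\mathcal{Q}\mathcal{P}_\mathcal{T}=\mathcal{P}_\mathcal{T}-\mathcal{P}_\mathcal{T}\mathcal{P}_{\mathcal{Q}^\perp}\mathcal{P}_\mathcal{T}$,
\begin{align}
    \mathcal{P}_\mathcal{T}\left(\mathcal{P}_\Omega+q\mathcal{P}_\mathcal{Q}\right)\mathcal{P}_\mathcal{T}-(p+q)\mathcal{P}_\mathcal{T} = \left(\mathcal{P}_\mathcal{T}\mathcal{P}_\Omega\mathcal{P}_\mathcal{T}-p\mathcal{P}_\mathcal{T}\right)-q\,\mathcal{P}_\mathcal{T}\mathcal{P}_{\mathcal{Q}^\perp}\mathcal{P}_\mathcal{T}, \nonumber
\end{align}
so by the triangle inequality for the spectral norm it suffices to bound the two summands, after dividing by $p+q$, by $C_R\sqrt{\mu_0\beta rn_1\log n_1}/\sqrt{m+qn_1n_2}$ and $n_1n_2\sqrt{\mu_{\mathcal{Q}^\perp}\mu_0 q}/\sqrt{m+qn_1n_2}$ respectively. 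Throughout I would convert between the two natural scalings using $p=m/(n_1n_2)$ and $p+q=(m+qn_1n_2)/(n_1n_2)$.

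For the random summand, only $\mathcal{P}_\Omega$ is stochastic, and $\mathcal{P}_\mathcal{T}\mathcal{P}_\Omega\mathcal{P}_\mathcal{T}-p\mathcal{P}_\mathcal{T}=\sum_{i,j}(\delta_{ij}-p)\,\mathcal{P}_\mathcal{T}\big(\langle\mathbf{e}_i\mathbf{e}_j^\intercal,\cdot\rangle\,\mathbf{e}_i\mathbf{e}_j^\intercal\big)\mathcal{P}_\mathcal{T}$ is a sum of independent, self-adjoint, mean-zero random operators on $\mathcal{T}$. I would write its spectral norm as the supremum of $\langle\mathbf{Y},(\mathcal{P}_\mathcal{T}\mathcal{P}_\Omega\mathcal{P}_\mathcal{T}-p\mathcal{P}_\mathcal{T})\mathbf{X}\rangle$ over a countable dense family of unit matrices $\mathbf{X},\mathbf{Y}\in\mathcal{T}$, putting it into exactly the form required by Lemma \ref{thm:bound_supremum}; the per-term bound $B$ and the variance proxy $\sigma^2$ are both controlled through Assumption \ref{ass:coherence} by the pointwise estimate $\|\mathcal{P}_\mathcal{T}(\mathbf{e}_i\mathbf{e}_j^\intercal)\|_F^2\leq 2\mu_0 r/n_2$. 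Combining the expectation bound of Lemma \ref{thm:bound_expectation} with the deviation bound of Lemma \ref{thm:bound_supremum}, and choosing the deviation level so that the failure probability is at most $3n_1^{-\beta}$ (the source of both the factor $3$ and the $\sqrt{\beta}$), yields $\|\mathcal{P}_\mathcal{T}\mathcal{P}_\Omega\mathcal{P}_\mathcal{T}-p\mathcal{P}_\mathcal{T}\|\leq(\sqrt{m}/(n_1n_2))\,C_R\sqrt{\mu_0\beta rn_1\log n_1}$ after absorbing $C'_R$ and the Talagrand constant $K$ into $C_R$; dividing by $p+q$ and using $\sqrt{m}\leq\sqrt{m+qn_1n_2}$ gives the first target term. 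The hypothesis $C_R\sqrt{\mu_0\beta rn_1\log n_1/(m+qn_1n_2)}<1$ is precisely what keeps the argument in the regime where these bounds apply.

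For the deterministic summand, since $\mathcal{P}_{\mathcal{Q}^\perp}$ is fixed I would bound the spectral norm by the Hilbert--Schmidt norm. Writing $\mathcal{P}_\mathcal{T}\mathcal{P}_{\mathcal{Q}^\perp}\mathcal{P}_\mathcal{T}=(\mathcal{P}_{\mathcal{Q}^\perp}\mathcal{P}_\mathcal{T})^{*}(\mathcal{P}_{\mathcal{Q}^\perp}\mathcal{P}_\mathcal{T})$ gives $\|\mathcal{P}_\mathcal{T}\mathcal{P}_{\mathcal{Q}^\perp}\mathcal{P}_\mathcal{T}\|=\|\mathcal{P}_{\mathcal{Q}^\perp}\mathcal{P}_\mathcal{T}\|^2$, and bounding the spectral norm of $\mathcal{P}_{\mathcal{Q}^\perp}\mathcal{P}_\mathcal{T}$ by its Hilbert--Schmidt norm together with the self-adjointness identity $\sum_{i,j}\|\mathcal{P}_{\mathcal{Q}^\perp}\mathcal{P}_\mathcal{T}(\mathbf{e}_i\mathbf{e}_j^\intercal)\|_F^2=\sum_{i,j}\|\mathcal{P}_\mathcal{T}\mathcal{P}_{\mathcal{Q}^\perp}(\mathbf{e}_i\mathbf{e}_j^\intercal)\|_F^2$ yields $\|\mathcal{P}_\mathcal{T}\mathcal{P}_{\mathcal{Q}^\perp}\mathcal{P}_\mathcal{T}\|\leq\sum_{i,j}\|\mathcal{P}_\mathcal{T}\mathcal{P}_{\mathcal{Q}^\perp}(\mathbf{e}_i\mathbf{e}_j^\intercal)\|_F^2$. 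By the definition \eqref{eq:muQperp_defn} of $\mu_{\mathcal{Q}^\perp}$ and $\sum_{i,j}\|\mathcal{P}_\mathcal{T}(\mathbf{e}_i\mathbf{e}_j^\intercal)\|_F^2=\dim\mathcal{T}=r(n_1+n_2-r)$, the right-hand side equals $\mu_{\mathcal{Q}^\perp}r(n_1+n_2-r)\leq 2\mu_{\mathcal{Q}^\perp}rn_1$. Multiplying by $q/(p+q)$ and invoking the hypothesis $\mu_{\mathcal{Q}^\perp}\mu_0qn_1^2n_2^2/(m+qn_1n_2)\leq 1$, which dominates the factor $2rn_1/\mu_0$ appearing here because $\mu_0\geq 1$ and $r\leq n_2$, produces the second target term $n_1n_2\sqrt{\mu_{\mathcal{Q}^\perp}\mu_0 q}/\sqrt{m+qn_1n_2}$.

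I expect the concentration step for the random summand to be the main obstacle: casting the operator norm as a Talagrand-type supremum and verifying the moment parameters $B$ and $\sigma^2$ under the coherence bound is the delicate part, whereas the deterministic bias reduces to an essentially algebraic Hilbert--Schmidt computation once $\mu_{\mathcal{Q}^\perp}$ is unwound. Finally, since every estimate is carried out under the Bernoulli model \eqref{eq:bernoulli} and only the random summand incurs a failure event, the stated probability $1-3n_1^{-\beta}$ follows directly from the bound in Lemma \ref{thm:bound_supremum}.
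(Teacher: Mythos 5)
Your proposal follows essentially the same route as the paper's proof: the identical triangle-inequality split into the mean-zero Bernoulli part $\mathcal{P}_\mathcal{T}\left(\mathcal{P}_\Omega-p\mathbf{I}\right)\mathcal{P}_\mathcal{T}$ and the deterministic part $q\,\mathcal{P}_\mathcal{T}\mathcal{P}_{\mathcal{Q}^\perp}\mathcal{P}_\mathcal{T}$; the same Talagrand-plus-expectation treatment of the random part via Lemmas \ref{thm:bound_supremum} and \ref{thm:bound_expectation}, with $B$ and $\sigma^2$ controlled by $\|\mathcal{P}_\mathcal{T}(\mathbf{e}_i\mathbf{e}_j^\intercal)\|_F^2\le 2\mu_0 r/n_2$ and the deviation level tuned to give failure probability $3n_1^{-\beta}$; and the same reduction of the deterministic term to $\sum_{i,j}\|\mathcal{P}_\mathcal{T}\mathcal{P}_{\mathcal{Q}^\perp}(\mathbf{e}_i\mathbf{e}_j^\intercal)\|_F^2$, unwound through the definition of $\mu_{\mathcal{Q}^\perp}$ and finished with $x\le\sqrt{x}$ for $x\le 1$.

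The one flaw is the final domination step for the deterministic term. After relaxing $\dim\mathcal{T}=r(n_1+n_2-r)$ to $2rn_1$, your target requires $2rn_1\le\mu_0 n_1 n_2$, but the stated justification ``$\mu_0\ge 1$ and $r\le n_2$'' only yields $2rn_1\le 2\mu_0 n_1 n_2$; the claim genuinely fails when $r>\mu_0 n_2/2$ (e.g.\ $r=n_2$ with $\mu_0=1$, which Assumption \ref{ass:coherence} does not exclude). The repair is already contained in your own computation: do not loosen the dimension count, and instead use $r(n_1+n_2-r)\le n_1n_2\le\mu_0 n_1n_2$, which holds for all $r\le n_2\le n_1$ since $n_1n_2-r(n_1+n_2-r)=(n_1-r)(n_2-r)\ge 0$. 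This is in effect the bound the paper uses, and with it the hypothesis $\mu_{\mathcal{Q}^\perp}\mu_0qn_1^2n_2^2/(m+qn_1n_2)\le 1$ delivers the second target term exactly as you intend; everything else in your argument goes through.
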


\begin{proof}
    \fixes{First, by the triangle inequality we have
    \begin{align}
        \frac{1}{p+q}\left\|\mathcal{P}_\mathcal{T}\left(\mathcal{P}_\Omega+q\mathcal{P}_\mathcal{Q}\right)\mathcal{P}_\mathcal{T}-(p+q)\mathcal{P}_\mathcal{T}\right\| \quad\quad\quad \quad\quad\quad \nonumber
    \end{align}
    \begin{align}
        & \leq \frac{1}{p+q}\left(\left\|\mathcal{P}_\mathcal{T}\left(\mathcal{P}_\Omega-p\mathbf{I}\right)\mathcal{P}_\mathcal{T}\right\| + q\left\|\mathcal{P}_\mathcal{T}\left(\mathcal{P}_\mathcal{Q}-\mathbf{I}\right)\mathcal{P}_\mathcal{T}\right\|\right) \nonumber \\
        & = \frac{1}{p+q}\left(\left\|\mathcal{P}_\mathcal{T}\left(\mathcal{P}_\Omega-p\mathbf{I}\right)\mathcal{P}_\mathcal{T}\right\| + q\left\|\mathcal{P}_\mathcal{T}\mathcal{P}_{\mathcal{Q}^\perp}\mathcal{P}_\mathcal{T}\right\|\right) \label{eq:Hnorm_2terms}
    \end{align}
    and will analyze each term independently.}

    \fixes{Any matrix $\mathbf{X}\in\mathbb{R}^{n_1\times n_2}$ can be written as
    \begin{align}
        \mathbf{X}=\sum_{i=1}^{n_1}\sum_{j=1}^{n_2}\langle \mathbf{X},\mathbf{e}_i\mathbf{e}_j^\intercal \rangle \mathbf{e}_i\mathbf{e}_j^\intercal. \nonumber
    \end{align}
    Then step-by-step, we form the inside of the first term
    \begin{align}
        \mathcal{P}_\mathcal{T}\left(\mathcal{P}_\Omega-p\mathbf{I}\right)\mathcal{P}_\mathcal{T}(\mathbf{X}) \quad\quad\quad\quad\quad\quad\quad\quad\quad\quad\quad\quad\quad\quad\quad \nonumber
    \end{align}
    \begin{align}
        & =\mathcal{P}_\mathcal{T}\left(\mathcal{P}_\Omega-p\mathbf{I}\right)\mathcal{P}_\mathcal{T}\sum_{i=1}^{n_1}\sum_{j=1}^{n_2}\langle \mathbf{X},\mathbf{e}_i\mathbf{e}_j^\intercal \rangle \mathbf{e}_i\mathbf{e}_j^\intercal \nonumber \\
        & =\mathcal{P}_\mathcal{T}\left(\mathcal{P}_\Omega-p\mathbf{I}\right)\sum_{i=1}^{n_1}\sum_{j=1}^{n_2}\langle \mathcal{P}_\mathcal{T}(\mathbf{X}),\mathbf{e}_i\mathbf{e}_j^\intercal \rangle \mathbf{e}_i\mathbf{e}_j^\intercal \nonumber \\
        & =\mathcal{P}_\mathcal{T}\sum_{i=1}^{n_1}\sum_{j=1}^{n_2}(\delta_{ij}-p)\langle \mathcal{P}_\mathcal{T}(\mathbf{X}),\mathbf{e}_i\mathbf{e}_j^\intercal \rangle \mathbf{e}_i\mathbf{e}_j^\intercal \nonumber \\
        & =\sum_{i=1}^{n_1}\sum_{j=1}^{n_2}(\delta_{ij}-p)\langle \mathcal{P}_\mathcal{T}(\mathbf{X}),\mathbf{e}_i\mathbf{e}_j^\intercal \rangle \mathcal{P}_\mathcal{T}(\mathbf{e}_i\mathbf{e}_j^\intercal) \nonumber \\
        & =\sum_{i=1}^{n_1}\sum_{j=1}^{n_2}(\delta_{ij}-p)\langle \mathbf{X},\mathcal{P}_\mathcal{T}(\mathbf{e}_i\mathbf{e}_j^\intercal) \rangle \mathcal{P}_\mathcal{T}(\mathbf{e}_i\mathbf{e}_j^\intercal) \label{eq:Hnorm_2terms1}
    \end{align}
    and likewise, form the inside of the second term
    \begin{align}
        \mathcal{P}_\mathcal{T}\mathcal{P}_{\mathcal{Q}^\perp}\mathcal{P}_\mathcal{T}(\mathbf{X}) \quad\quad\quad\quad\quad\quad\quad\quad\quad\quad\quad\quad\quad\quad \nonumber
    \end{align}
    \begin{align}
        & = \mathcal{P}_\mathcal{T}\mathcal{P}_{\mathcal{Q}^\perp}\mathcal{P}_\mathcal{T}\sum_{i=1}^{n_1}\sum_{j=1}^{n_2}\langle \mathbf{X},\mathbf{e}_i\mathbf{e}_j^\intercal \rangle \mathbf{e}_i\mathbf{e}_j^\intercal \nonumber \\
        & = \mathcal{P}_\mathcal{T}\mathcal{P}_{\mathcal{Q}^\perp}\mathcal{P}_{\mathcal{Q}^\perp}\mathcal{P}_\mathcal{T}\sum_{i=1}^{n_1}\sum_{j=1}^{n_2}\langle \mathbf{X},\mathbf{e}_i\mathbf{e}_j^\intercal \rangle \mathbf{e}_i\mathbf{e}_j^\intercal \nonumber \\
        & = \mathcal{P}_\mathcal{T}\mathcal{P}_{\mathcal{Q}^\perp}\sum_{i=1}^{n_1}\sum_{j=1}^{n_2}\langle \mathcal{P}_{\mathcal{Q}^\perp}\mathcal{P}_\mathcal{T}(\mathbf{X}),\mathbf{e}_i\mathbf{e}_j^\intercal \rangle \mathbf{e}_i\mathbf{e}_j^\intercal \nonumber \\
        & = \sum_{i=1}^{n_1}\sum_{j=1}^{n_2}\langle \mathcal{P}_{\mathcal{Q}^\perp}\mathcal{P}_\mathcal{T}(\mathbf{X}),\mathbf{e}_i\mathbf{e}_j^\intercal \rangle \mathcal{P}_\mathcal{T}\mathcal{P}_{\mathcal{Q}^\perp}(\mathbf{e}_i\mathbf{e}_j^\intercal) \nonumber \\
        & = \sum_{i=1}^{n_1}\sum_{j=1}^{n_2}\langle \mathbf{X},\mathcal{P}_\mathcal{T}\mathcal{P}_{\mathcal{Q}^\perp}(\mathbf{e}_i\mathbf{e}_j^\intercal) \rangle \mathcal{P}_\mathcal{T}\mathcal{P}_{\mathcal{Q}^\perp}(\mathbf{e}_i\mathbf{e}_j^\intercal) \label{eq:Hnorm_2terms2}
    \end{align}
    Both terms in \eqref{eq:Hnorm_2terms} will be analyzed separately using \eqref{eq:Hnorm_2terms1} and \eqref{eq:Hnorm_2terms2}, and then applied back together for the lemma's statement.}
    
    \fixes{Before analyzing them, from \eqref{eq:PT_defn} we have that
    \begin{align}
        \mathcal{P}_\mathcal{T}(\mathbf{e}_i\mathbf{e}_j^\intercal) & = \mathbf{P}_\mathcal{U}(\mathbf{e}_i\mathbf{e}_j^\intercal) + (\mathbf{e}_i\mathbf{e}_j^\intercal)\mathbf{P}_\mathcal{V} - \mathbf{P}_\mathcal{U}(\mathbf{e}_i\mathbf{e}_j^\intercal)\mathbf{P}_\mathcal{V} \nonumber \\
        & = (\mathbf{P}_\mathcal{U}\mathbf{e}_i)\mathbf{e}_j^\intercal + \mathbf{e}_i(\mathbf{P}_\mathcal{V}\mathbf{e}_j)^\intercal - (\mathbf{P}_\mathcal{U}\mathbf{e}_i)(\mathbf{P}_\mathcal{V}\mathbf{e}_j)^\intercal \nonumber
    \end{align}
    which allows for the following upper bound on its Frobenius norm
    \begin{align}
        \|\mathcal{P}_\mathcal{T}(\mathbf{e}_i\mathbf{e}_j^\intercal)\|_F^2 & = \langle \mathcal{P}_\mathcal{T}(\mathbf{e}_i\mathbf{e}_j^\intercal),\mathcal{P}_\mathcal{T}(\mathbf{e}_i\mathbf{e}_j^\intercal) \rangle \nonumber \\
        & = \langle \mathcal{P}_\mathcal{T}(\mathbf{e}_i\mathbf{e}_j^\intercal),\mathbf{e}_i\mathbf{e}_j^\intercal \rangle \nonumber \\
        & = \langle (\mathbf{P}_\mathcal{U}\mathbf{e}_i)\mathbf{e}_j^\intercal,\mathbf{e}_i\mathbf{e}_j^\intercal \rangle + \langle \mathbf{e}_i(\mathbf{P}_\mathcal{V}\mathbf{e}_j)^\intercal,\mathbf{e}_i\mathbf{e}_j^\intercal \rangle \nonumber \\
        & \quad\quad - \langle (\mathbf{P}_\mathcal{U}\mathbf{e}_i)(\mathbf{P}_\mathcal{V}\mathbf{e}_j)^\intercal,\mathbf{e}_i\mathbf{e}_j^\intercal \rangle \nonumber \\
        & = \langle (\mathbf{P}_\mathcal{U}\mathbf{e}_i),\mathbf{e}_i\mathbf{e}_j^\intercal\mathbf{e}_j \rangle + \langle  {e}_j^\intercal,(\mathbf{P}_\mathcal{V}\mathbf{e}_j)\mathbf{e}_i^\intercal\mathbf{e}_i \rangle \nonumber \\
        & \quad\quad - \langle (\mathbf{P}_\mathcal{U}\mathbf{e}_i)(\mathbf{P}_\mathcal{V}\mathbf{e}_j)^\intercal,\mathbf{P}_\mathcal{U}\mathbf{e}_i\mathbf{e}_j^\intercal \rangle \nonumber \\
        & = \langle (\mathbf{P}_\mathcal{U}\mathbf{e}_i),\mathbf{e}_i \rangle + \langle  {e}_j^\intercal,(\mathbf{P}_\mathcal{V}\mathbf{e}_j) \rangle \nonumber \\
        & \quad\quad - \langle \mathbf{e}_j,(\mathbf{P}_\mathcal{V}\mathbf{e}_j)(\mathbf{P}_\mathcal{U}\mathbf{e}_i)^\intercal(\mathbf{P}_\mathcal{U}\mathbf{e}_i) \rangle \nonumber \\
        & = \langle (\mathbf{P}_\mathcal{U}\mathbf{e}_i),(\mathbf{P}_\mathcal{U}\mathbf{e}_i) \rangle + \langle  (\mathbf{P}_\mathcal{V}\mathbf{e}_j),(\mathbf{P}_\mathcal{V}\mathbf{e}_j) \rangle \nonumber \\
        & \quad\quad - \|\mathbf{P}_\mathcal{U}\mathbf{e}_i\|_F^2\langle \mathbf{e}_j,(\mathbf{P}_\mathcal{V}\mathbf{e}_j) \rangle \nonumber \\
        & = \|\mathbf{P}_\mathcal{U}\mathbf{e}_i\|_F^2 + \|\mathbf{P}_\mathcal{V}\mathbf{e}_j\|_F^2 \nonumber \\
        & \quad\quad - \|\mathbf{P}_\mathcal{U}\mathbf{e}_i\|_F^2\langle (\mathbf{P}_\mathcal{V}\mathbf{e}_j),(\mathbf{P}_\mathcal{V}\mathbf{e}_j) \rangle \nonumber \\
        & = \|\mathbf{P}_\mathcal{U}\mathbf{e}_i\|_F^2 + \|\mathbf{P}_\mathcal{V}\mathbf{e}_j\|_F^2 - \|\mathbf{P}_\mathcal{U}\mathbf{e}_i\|_F^2 \|\mathbf{P}_\mathcal{V}\mathbf{e}_j\|_F^2 \nonumber \\
        & \leq \|\mathbf{P}_\mathcal{U}\mathbf{e}_i\|_F^2 + \|\mathbf{P}_\mathcal{V}\mathbf{e}_j\|_F^2 \nonumber \\
        & \leq \frac{\mu(\mathcal{U})r}{n_1}  + \frac{\mu(\mathcal{V})r}{n_2} \leq 2\frac{\mu_0r}{n_2} \label{eq:PTeiej_mu0}
    \end{align}
    where the fourth and fifth equalities come from applying the cyclic property of the trace operator.}
    
    \fixes{Bounding the first term of \eqref{eq:Hnorm_2terms} at high probability will come from the application of Lemma \ref{thm:bound_supremum} where $Y_{ij}:=\delta_{ij}-p$ and $f(Y_{ij}):=\frac{1}{p+q}Y_{ij}\langle \mathbf{X}_1,\mathcal{P}_\mathcal{T}(\mathbf{e}_i\mathbf{e}_j^\intercal) \rangle \langle \mathcal{P}_\mathcal{T}(\mathbf{e}_i\mathbf{e}_j^\intercal), \mathbf{X}_2 \rangle$ for some $\mathbf{X}_1:\|\mathbf{X}_1\|_F\leq 1$ and $\mathbf{X}_2:\|\mathbf{X}_2\|_F\leq 1$.
    At this point, the condition $\mathbb{E}(f(Y_{ij}))=0$ is already satisfied.
    We can rewrite the first term of \eqref{eq:Hnorm_2terms} with \eqref{eq:Hnorm_2terms1} as
    \begin{align}
        \frac{1}{p+q}\left\|\mathcal{P}_\mathcal{T}\left(\mathcal{P}_\Omega-p\mathbf{I}\right)\mathcal{P}_\mathcal{T}\right\| \quad\quad\quad\quad\quad\quad\quad\quad\quad\quad\quad\quad \nonumber
    \end{align}
    \begin{align}
         & =\sup \frac{\left\langle \mathbf{X}_1, \sum_{i=1}^{n_1}\sum_{j=1}^{n_2}(\delta_{ij}-p)\langle \mathbf{X}_2,\mathcal{P}_\mathcal{T}(\mathbf{e}_i\mathbf{e}_j^\intercal)\rangle \mathcal{P}_\mathcal{T}(\mathbf{e}_i\mathbf{e}_j^\intercal) \right\rangle}{p+q}  \nonumber \\
         & =\sup \sum_{i=1}^{n_1}\sum_{j=1}^{n_2} \frac{(\delta_{ij}-p)}{p+q}  \langle \mathbf{X}_1, \mathcal{P}_\mathcal{T}(\mathbf{e}_i\mathbf{e}_j^\intercal)\rangle \langle \mathbf{X}_2,\mathcal{P}_\mathcal{T}(\mathbf{e}_i\mathbf{e}_j^\intercal)\rangle \nonumber \\
         & =: Y_\text{sup} \nonumber
    \end{align}
    where the supremum is on $\mathbf{X}_1:\|\mathbf{X}_1\|_F\leq 1$ and $\mathbf{X}_2:\|\mathbf{X}_2\|_F\leq 1$.
    To determine $B$, we have
    \begin{align}
        |f(\delta_{ij}-p)| & = \frac{|\delta_{ij}-p|}{p+q} |\langle \mathbf{X}_1, \mathcal{P}_\mathcal{T}(\mathbf{e}_i\mathbf{e}_j^\intercal)\rangle| |\langle \mathbf{X}_2,\mathcal{P}_\mathcal{T}(\mathbf{e}_i\mathbf{e}_j^\intercal)\rangle| \nonumber \\
        & \leq \frac{|\delta_{ij}-p|}{p+q} \|\mathcal{P}_\mathcal{T}(\mathbf{e}_i\mathbf{e}_j^\intercal)\|_F^2 \|\mathbf{X}_1\|_F \|\mathbf{X}_1\|_F \nonumber \\
        & \leq \frac{|\delta_{ij}-p|}{p+q} \|\mathcal{P}_\mathcal{T}(\mathbf{e}_i\mathbf{e}_j^\intercal)\|_F^2 \nonumber \\
        & \leq \frac{1}{p+q} \|\mathcal{P}_\mathcal{T}(\mathbf{e}_i\mathbf{e}_j^\intercal)\|_F^2 \nonumber \\
        & \leq \frac{2r\mu_0}{(p+q)n_2} \nonumber \\
        & = \frac{2r\mu_0n_1}{m+qn_1n_2} =: B \nonumber
    \end{align}
    where the fourth inequality comes from applying \eqref{eq:PTeiej_mu0} and the last equality is from $p=\frac{m}{n_1n_2}$.
    Lastly, $\sigma^2$ needs to be determined.
    Note that second moment of $(\delta_{ij}-p)$ is $\mathbb{E}((\delta_{ij}-p)^2) = \mathbb{E}(\delta_{ij}^2-2\delta_{ij}+p^2) = p(1-p)$.
    The summand inside the definition of $\sigma^2$ is
    \begin{align}
        \mathbb{E}((f(\delta_{ij}-p))^2) \quad\quad\quad\quad\quad\quad\quad\quad\quad\quad\quad\quad\quad\quad\quad\quad\quad \nonumber
    \end{align}
    \begin{align}
        & = \frac{\mathbb{E}((\delta_{ij}-p)^2)}{(p+q)^2} \langle \mathbf{X}_1, \mathcal{P}_\mathcal{T}(\mathbf{e}_i\mathbf{e}_j^\intercal)\rangle^2 \langle \mathbf{X}_2,\mathcal{P}_\mathcal{T}(\mathbf{e}_i\mathbf{e}_j^\intercal)\rangle^2 \nonumber \\
        & = \frac{p(1-p)}{(p+q)^2} \langle \mathbf{X}_1, \mathcal{P}_\mathcal{T}(\mathbf{e}_i\mathbf{e}_j^\intercal)\rangle^2 \langle \mathbf{X}_2,\mathcal{P}_\mathcal{T}(\mathbf{e}_i\mathbf{e}_j^\intercal)\rangle^2 \nonumber \\
        & = \frac{p(1-p)}{(p+q)^2} \langle \mathbf{X}_1, \mathcal{P}_\mathcal{T}(\mathbf{e}_i\mathbf{e}_j^\intercal)\rangle^2 \langle \mathcal{P}_\mathcal{T}(\mathbf{X}_2),\mathbf{e}_i\mathbf{e}_j^\intercal \rangle^2 \nonumber \\
        & \leq \frac{p(1-p)}{(p+q)^2} \|\mathcal{P}_\mathcal{T}(\mathbf{e}_i\mathbf{e}_j^\intercal)\|_F^2 \|\mathbf{X}_1\|_F^2 \langle \mathcal{P}_\mathcal{T}(\mathbf{X}_2),\mathbf{e}_i\mathbf{e}_j^\intercal \rangle^2 \nonumber \\
        & \leq \frac{p(1-p)}{(p+q)^2} \|\mathcal{P}_\mathcal{T}(\mathbf{e}_i\mathbf{e}_j^\intercal)\|_F^2 \langle \mathcal{P}_\mathcal{T}(\mathbf{X}_2),\mathbf{e}_i\mathbf{e}_j^\intercal \rangle^2 \nonumber \\
        & \leq \frac{p}{(p+q)^2} \|\mathcal{P}_\mathcal{T}(\mathbf{e}_i\mathbf{e}_j^\intercal)\|_F^2 \langle \mathcal{P}_\mathcal{T}(\mathbf{X}_2),\mathbf{e}_i\mathbf{e}_j^\intercal \rangle^2 \nonumber \\
        & \leq \frac{2r\mu_0}{n_2}\frac{p}{(p+q)^2} \langle \mathcal{P}_\mathcal{T}(\mathbf{X}_2),\mathbf{e}_i\mathbf{e}_j^\intercal \rangle^2 \nonumber \\
        & \leq \frac{2r\mu_0}{n_2}\frac{1}{p+q} \langle \mathcal{P}_\mathcal{T}(\mathbf{X}_2),\mathbf{e}_i\mathbf{e}_j^\intercal \rangle^2 \nonumber \\
        & = \frac{2r\mu_0n_1}{m+qn_1n_2} \langle \mathcal{P}_\mathcal{T}(\mathbf{X}_2),\mathbf{e}_i\mathbf{e}_j^\intercal \rangle^2 \nonumber
    \end{align}
    where the fourth inequality comes from applying \eqref{eq:PTeiej_mu0}.
    To determine an upper bound on $\sigma^2$ we have
    \begin{align}
        \sigma^2 & = \sup \sum_{i=1}^{n_1}\sum_{j=1}^{n_2} \mathbb{E}((f(\delta_{ij}-p))^2) \nonumber \\
        & \leq \sup \frac{2r\mu_0n_1}{m+qn_1n_2}\sum_{i=1}^{n_1}\sum_{j=1}^{n_2}\langle \mathcal{P}_\mathcal{T}(\mathbf{X}_2),\mathbf{e}_i\mathbf{e}_j^\intercal \rangle^2 \nonumber \\
        & = \sup \frac{2r\mu_0n_1}{m+qn_1n_2}\|\mathcal{P}_\mathcal{T}(\mathbf{X}_2)\|_F^2 \nonumber \\
        & \leq \frac{2r\mu_0n_1}{m+qn_1n_2}. \nonumber
    \end{align}
    From Lemma \ref{thm:bound_expectation} and multiplying it by $\frac{p}{p+q}$, we bound the expectation of the first term of \eqref{eq:Hnorm_2terms} to be less than 1 which also satisfies the condition of Lemma \ref{thm:bound_expectation}
    \begin{align}
        \mathbb{E}\left(\frac{1}{p+q}\left\|\mathcal{P}_\mathcal{T}\left(\mathcal{P}_\Omega-p\mathbf{I}\right)\mathcal{P}_\mathcal{T}\right\|\right) \quad\quad\quad\quad\quad\quad\quad\quad\quad \nonumber
    \end{align}
    \begin{align}
        & \leq  C'_R \frac{p}{p+q} \sqrt{\frac{\mu_0 r n_1 \log n_1}{m}} \nonumber \\
        & =  C'_R \frac{m}{m+qn_1n_2} \sqrt{\frac{\mu_0 r n_1 \log n_1}{m}} \nonumber \\
        & =  C'_R \sqrt{\frac{m}{m+qn_1n_2}} \sqrt{\frac{\mu_0 r n_1 \log n_1}{m+qn_1n_2}} \nonumber \\
        & \leq  C'_R \sqrt{\frac{\mu_0 r n_1 \log n_1}{m+qn_1n_2}} \label{eq:Hnorm_2terms1_expecation} \\
        & = \left(C_R-\sqrt{\frac{4K}{\log 2}}\right) \sqrt{\frac{\mu_0 r n_1 \log n_1}{m+qn_1n_2}} \nonumber \\
        & \leq C_R \sqrt{\frac{\mu_0 r n_1 \log n_1}{m+qn_1n_2}} < 1 \nonumber 
    \end{align}
    where the third equality comes from setting $C'_R:=C_R - \sqrt{\frac{4K}{\log 2}}$ which will be used again later and the last line comes from the assumption in the lemma's statement for any $\beta>1$.}
    
    \fixes{Using the above results, we apply Lemma \ref{thm:bound_supremum} to bound the distance the first term of \eqref{eq:Hnorm_2terms} is from its expected value.
    \begin{align}
        \mathbb{P}(|Y_\text{sup}-\mathbb{E}(Y_\text{sup})| > t) \quad\quad\quad\quad\quad\quad\quad\quad\quad\quad\quad\quad\quad\quad \nonumber
    \end{align}
    \begin{align}
        & \leq 3 \exp\left(-\frac{t}{KB}\log\left(1+\frac{Bt}{\sigma^2+B\mathbb{E}(Y_\text{sup})}\right)\right) \nonumber \\
        & \leq 3 \exp\left(-\frac{t \log 2}{KB}\min\left\{1,\frac{Bt}{\sigma^2+B\mathbb{E}(Y_\text{sup})}\right\}\right) \nonumber \\
        & \leq 3 \exp\left(-\frac{t(m+qn_1n_2) \log 2}{2Kr\mu_0n_1}\min\left\{1,\frac{t}{2}\right\}\right) \nonumber
    \end{align}
    where the second inequality comes from the fact for any $u\geq 0$, we have $\log (1+u)\geq \log 2 \min\{1,u\}$.
    Set $t:=\sqrt{\left(\frac{4K}{\log 2}\right)\frac{\beta r \mu_0 n_1 \log n_1}{m+qn_1n_2}}$ to get the following
    \begin{align}
        \mathbb{P}\left(|Y_\text{sup}-\mathbb{E}(Y_\text{sup})| > \sqrt{\left(\frac{4K}{\log 2}\right)\frac{\beta r \mu_0 n_1 \log n_1}{m+qn_1n_2}}\right) \quad\quad \nonumber
    \end{align}
    \begin{align}
        & \leq 3 \exp\left(-\min\left\{\sqrt{\frac{\log 2}{K}\frac{\beta (m+qn_1n_2) \log n_1}{r \mu_0 n_1}},\beta \log n_1\right\}\right) \nonumber \\
        & = 3 n_1^{-\beta} \label{eq:Hnorm_2terms1_concentration}
    \end{align}
    where the equality is sufficiently true when we have $C_R = C'_R + \sqrt{\frac{4K}{\log 2}}$ and assume $C_R \sqrt{\frac{\mu_0\beta rn_1\log n_1}{m\fixes{+qn_1n_2}}}<1$ from the lemma's statement.}
    
    \fixes{Bringing together the bound on the expectation \eqref{eq:Hnorm_2terms1_expecation} and concentration \eqref{eq:Hnorm_2terms1_concentration} of the first term of \eqref{eq:Hnorm_2terms}, we have
    \begin{align}
        \frac{1}{p+q}\left\|\mathcal{P}_\mathcal{T}\left(\mathcal{P}_\Omega-p\mathbf{I}\right)\mathcal{P}_\mathcal{T}\right\| \quad\quad\quad\quad\quad\quad\quad\quad\quad\quad\quad\quad \nonumber
    \end{align}
    \begin{align}
        & \leq  C'_R \sqrt{\frac{\mu_0 r n_1 \log n_1}{m+qn_1n_2}} + \sqrt{\left(\frac{4K}{\log 2}\right)\frac{\beta r \mu_0 n_1 \log n_1}{m+qn_1n_2}} \nonumber \\ 
        & \leq C_R\sqrt{\frac{\beta r \mu_0 n_1 \log n_1}{m+qn_1n_2}} \label{eq:Hnorm_2terms1_final}
    \end{align}
    with probability $1-3n_1^{-\beta}$ for $\beta>1$ and setting $C_R := C'_R + \sqrt{\frac{4K}{\log 2}}$.}
    
    \fixes{Now we move on to analyze the second term of \eqref{eq:Hnorm_2terms} with \eqref{eq:Hnorm_2terms2} and the supremum on $\mathbf{X}_1:\|\mathbf{X}_1\|_F\leq 1$ and $\mathbf{X}_2:\|\mathbf{X}_2\|_F\leq 1$.
    \begin{align}
        \frac{q}{p+q}\left\|\mathcal{P}_\mathcal{T}\mathcal{P}_{\mathcal{Q}^\perp}\mathcal{P}_\mathcal{T}\right\| \quad\quad\quad\quad\quad\quad\quad\quad\quad\quad\quad\quad \nonumber
    \end{align}
    \begin{align}
         & =\sup \left\langle \mathbf{X}_1, \sum_{i=1}^{n_1}\sum_{j=1}^{n_2}\langle \mathbf{X}_2,\mathcal{P}_\mathcal{T}\mathcal{P}_{\mathcal{Q}^\perp}(\mathbf{e}_i\mathbf{e}_j^\intercal)\rangle \mathcal{P}_\mathcal{T}\mathcal{P}_{\mathcal{Q}^\perp}(\mathbf{e}_i\mathbf{e}_j^\intercal) \right\rangle  \nonumber \\
         & \quad \cdot \frac{q}{p+q} \nonumber \\
         & =\sup \frac{q}{p+q} \sum_{i=1}^{n_1}\sum_{j=1}^{n_2}   \langle \mathbf{X}_1, \mathcal{P}_\mathcal{T}\mathcal{P}_{\mathcal{Q}^\perp}(\mathbf{e}_i\mathbf{e}_j^\intercal)\rangle \langle \mathbf{X}_2,\mathcal{P}_\mathcal{T}\mathcal{P}_{\mathcal{Q}^\perp}(\mathbf{e}_i\mathbf{e}_j^\intercal)\rangle \nonumber \\
         & \leq \sup \frac{q}{p+q} \sum_{i=1}^{n_1}\sum_{j=1}^{n_2}   \|\mathcal{P}_\mathcal{T}\mathcal{P}_{\mathcal{Q}^\perp}(\mathbf{e}_i\mathbf{e}_j^\intercal)\|_F^2 \|\mathbf{X}_1\|_F \|\mathbf{X}_2\|_F \nonumber \\
         & \leq \frac{q}{p+q} \sum_{i=1}^{n_1}\sum_{j=1}^{n_2}   \|\mathcal{P}_\mathcal{T}\mathcal{P}_{\mathcal{Q}^\perp}(\mathbf{e}_i\mathbf{e}_j^\intercal)\|_F^2 \nonumber \\
         & = \frac{q}{p+q} \mu_{\mathcal{Q}^\perp} \sum_{i=1}^{n_1}\sum_{j=1}^{n_2}   \|\mathcal{P}_\mathcal{T}(\mathbf{e}_i\mathbf{e}_j^\intercal)\|_F^2 \nonumber \\
         & \leq \frac{q}{p+q} \mu_{\mathcal{Q}^\perp}\mu_0n_1n_2 \nonumber \\
         & = \frac{qn_1n_2}{m+qn_1n_2} \mu_{\mathcal{Q}^\perp}\mu_0n_1n_2 \nonumber \\
         & \leq \sqrt{\frac{\mu_{\mathcal{Q}^\perp}\mu_0qn_1^2n_2^2}{m+qn_1n_2}} \label{eq:Hnorm_2terms2_final}
    \end{align}
    where the third equality comes from the definition of $\mu_{\mathcal{Q}^\perp}$ in \eqref{eq:muQperp_defn}, and the last inequality is true because of the condition in the lemma statement that $\frac{\mu_{\mathcal{Q}^\perp}\mu_0qn_1^2n_2^2}{m+qn_1n_2}\leq 1$.}
    
    \fixes{Finally, plugging in \eqref{eq:Hnorm_2terms1_final} and \eqref{eq:Hnorm_2terms2_final} into \eqref{eq:Hnorm_2terms} gets the resultant.}
\end{proof}


\begin{lemma}
    \label{thm:dualnorm_nuc_spec}
    For each pair $\mathbf{W}$ and $\mathbf{H}$, we have $\langle \mathbf{W},\mathbf{H} \rangle \leq \|\mathbf{W}\| \|\mathbf{H}\|_*$.
    In particular, for each $\mathbf{H}$, there is a $\mathbf{W}$ such that $\|\mathbf{W}\|=1$ where it achieves the equality.
\end{lemma}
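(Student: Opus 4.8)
The plan is to prove the inequality directly from the singular value decomposition of $\mathbf{H}$, avoiding any appeal to the full von Neumann trace inequality. First I would write $\mathbf{H} = \sum_{k=1}^{r}\sigma_k\mathbf{p}_k\mathbf{q}_k^\intercal$, where $r$ is the rank of $\mathbf{H}$, the $\sigma_k$ are its singular values, and $\{\mathbf{p}_k\}$ and $\{\mathbf{q}_k\}$ are the associated orthonormal sets of left and right singular vectors. Expanding the inner product by linearity of the trace and its cyclic property gives
\begin{align}
\langle \mathbf{W},\mathbf{H}\rangle = \text{trace}\left(\mathbf{W}^\intercal \sum_{k=1}^r \sigma_k \mathbf{p}_k\mathbf{q}_k^\intercal\right) = \sum_{k=1}^r \sigma_k\, \mathbf{p}_k^\intercal \mathbf{W}\mathbf{q}_k. \nonumber
\end{align}

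The second step is to bound each scalar term. Since $\mathbf{p}_k$ and $\mathbf{q}_k$ are unit vectors, Cauchy--Schwarz together with the definition of the spectral norm as the operator norm gives $|\mathbf{p}_k^\intercal \mathbf{W}\mathbf{q}_k| \leq \|\mathbf{p}_k\|\,\|\mathbf{W}\mathbf{q}_k\| \leq \|\mathbf{W}\|$. Summing over $k$ and using $\sigma_k\geq 0$ yields
\begin{align}
\langle \mathbf{W},\mathbf{H}\rangle \leq \sum_{k=1}^r \sigma_k\, |\mathbf{p}_k^\intercal \mathbf{W}\mathbf{q}_k| \leq \|\mathbf{W}\|\sum_{k=1}^r \sigma_k = \|\mathbf{W}\|\,\|\mathbf{H}\|_*, \nonumber
\end{align}
which is the claimed inequality.

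For the equality statement, given $\mathbf{H}$ I would construct the witness $\mathbf{W} := \sum_{k=1}^r \mathbf{p}_k\mathbf{q}_k^\intercal$ from the same singular vectors. Because $\{\mathbf{p}_k\}$ and $\{\mathbf{q}_k\}$ are each orthonormal, this $\mathbf{W}$ is a partial isometry whose nonzero singular values all equal $1$, so $\|\mathbf{W}\| = 1$. Substituting into the inner product and using orthonormality ($\mathbf{p}_j^\intercal\mathbf{p}_k = \delta_{jk}$ and $\mathbf{q}_j^\intercal\mathbf{q}_k = \delta_{jk}$) collapses the resulting double sum, giving $\langle \mathbf{W},\mathbf{H}\rangle = \sum_{k=1}^r \sigma_k = \|\mathbf{H}\|_* = \|\mathbf{W}\|\,\|\mathbf{H}\|_*$, so equality is attained.

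I expect no serious obstacle, since this is the standard duality pairing between the nuclear and spectral norms. The only point that requires a little care is verifying that the equality-achieving $\mathbf{W}$ has spectral norm exactly $1$; rather than computing its singular values directly, the cleanest argument is to recognize $\mathbf{W}$ as a partial isometry assembled from the orthonormal singular-vector systems of $\mathbf{H}$, from which $\|\mathbf{W}\|=1$ follows immediately.
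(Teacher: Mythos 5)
Your proof is correct, but it takes a different route from the paper: the paper does not prove this lemma at all, it simply remarks that the inequality is the duality pairing between the spectral and nuclear norms and cites \cite{recht2010guaranteed} for that fact. You instead reconstruct the duality from scratch --- expanding $\langle \mathbf{W},\mathbf{H}\rangle = \sum_{k}\sigma_k\,\mathbf{p}_k^\intercal\mathbf{W}\mathbf{q}_k$ via the SVD of $\mathbf{H}$ and the cyclic property of the trace, bounding each term by $\|\mathbf{W}\|$ through Cauchy--Schwarz, and exhibiting the partial isometry $\mathbf{W}=\sum_k \mathbf{p}_k\mathbf{q}_k^\intercal$ as the equality witness. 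What the paper's citation buys is brevity; what your argument buys is self-containment and, more usefully, an \emph{explicit} equality-achieving $\mathbf{W}$, which is precisely the object the paper needs (but never constructs) in the proof of Lemma \ref{thm:nuc_norm_opt}, where a matrix $\mathbf{Z}$ with $\|\mathbf{Z}\|\leq 1$ satisfying $\langle \mathbf{Z},\mathcal{P}_{\mathcal{T}^\perp}(\mathbf{H})\rangle = \|\mathcal{P}_{\mathcal{T}^\perp}(\mathbf{H})\|_*$ is invoked. One trivial caveat: if $\mathbf{H}=\mathbf{0}$ your construction gives $\mathbf{W}=\mathbf{0}$, which has spectral norm $0$ rather than $1$; in that degenerate case any $\mathbf{W}$ with $\|\mathbf{W}\|=1$ attains the (zero) equality, so the statement still holds, but a one-line remark excluding or handling $\mathbf{H}=\mathbf{0}$ would make the argument airtight.
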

This comes directly from the fact that the spectral norm and nuclear norm are dual to each other which is proved in \cite{recht2010guaranteed}.


\begin{lemma}
    \label{thm:khintchine}
    \fixes{(Noncommutative Khintchine inequality) Suppose $\{\mathbf{X}_1,\dots,\mathbf{X}_r\}\in\mathbb{R}^{n_1\times n_2}$ is a finite sequence of matrices, $\{\epsilon_1,\dots,\epsilon_r\}$ is a Rademacher sequence, and $b\geq 2$, then
    \begin{align}
        \left(\mathbb{E}_\epsilon\left(\left\|\sum_{k=1}^r\epsilon_k\mathbf{X}_k\right\|_{S_b}^b\right)\right)^{\frac{1}{b}} \quad\quad\quad\quad\quad\quad\quad\quad\quad\quad\quad\quad \nonumber
    \end{align}
    \begin{align}
        \leq C_K \sqrt{b} \max \left\{ \left\|\left(\sum_{k=1}^r\mathbf{X}_r^\intercal\mathbf{X}_r\right)^{\frac{1}{2}}\right\|_{S_b}, \left\|\left(\sum_{k=1}^r\mathbf{X}_r\mathbf{X}_r^\intercal\right)^{\frac{1}{2}}\right\|_{S_b} \right\} \nonumber
    \end{align}
    where $C_K=2^{-\frac{1}{4}}\sqrt{\frac{\pi}{e}}$.}
\end{lemma}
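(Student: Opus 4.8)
The statement is the (noncommutative, or \emph{operator}) Khintchine inequality, a classical result. The cleanest self-contained route is the \emph{moment method}, which I would carry out for even integers $b=2p$ first and then extend to all $b\geq 2$ by interpolation. The plan is to write the Schatten norm as a trace, expand the power, take the Rademacher expectation so that only terms in which every index occurs with even multiplicity survive, and then reorganize the surviving terms as a sum over pairings of the $2p$ slots, bounding that combinatorial sum by the two quantities appearing on the right-hand side.

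First I would use, for $b=2p$, the trace representation
\begin{align}
    \left\|\sum_{k=1}^r\epsilon_k\mathbf{X}_k\right\|_{S_{2p}}^{2p} = \mathrm{tr}\left[\left(\left(\sum_{k=1}^r\epsilon_k\mathbf{X}_k\right)^\intercal\left(\sum_{k=1}^r\epsilon_k\mathbf{X}_k\right)\right)^{p}\right], \nonumber
\end{align}
and, after expanding the $p$-th power of $\mathbf{A}^\intercal\mathbf{A}=\sum_{j,k}\epsilon_j\epsilon_k\mathbf{X}_j^\intercal\mathbf{X}_k$, take the expectation:
\begin{align}
    \mathbb{E}_\epsilon\,\mathrm{tr}\left[\sum_{k_1,\dots,k_{2p}} \epsilon_{k_1}\cdots\epsilon_{k_{2p}}\,\mathbf{X}_{k_1}^\intercal\mathbf{X}_{k_2}\cdots\mathbf{X}_{k_{2p}}\right]. \nonumber
\end{align}
Since $\mathbb{E}_\epsilon[\epsilon_{k_1}\cdots\epsilon_{k_{2p}}]$ equals $1$ when every value among $k_1,\dots,k_{2p}$ appears an even number of times and $0$ otherwise, only such index patterns contribute, and I would group them according to the pair partition they induce on the $2p$ slots.

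The heart of the argument --- and the main obstacle --- is to show that this sum over pairings is dominated by the two ``extreme'' pairings. Pairing $k_1$ with $k_2$, $k_3$ with $k_4$, and so on, reproduces exactly $\mathrm{tr}\big[(\sum_k\mathbf{X}_k^\intercal\mathbf{X}_k)^p\big]$, while the cyclically shifted pairing reproduces $\mathrm{tr}\big[(\sum_k\mathbf{X}_k\mathbf{X}_k^\intercal)^p\big]$. Each remaining (crossing) pairing must then be controlled by a noncommutative Cauchy--Schwarz / trace-positivity estimate against these two positive-semidefinite quantities, and the number of admissible pairings must be counted sharply: the asymptotics of this count (essentially the Gaussian moment $(2p-1)!!$) are what produce the $\sqrt{b}$ growth and the optimal constant $C_K=2^{-1/4}\sqrt{\pi/e}$. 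This sharp counting is precisely Buchholz's refinement; a cruder uniform bound on the number of pairings already yields the inequality with a worse absolute constant, which would in fact be enough for all the later uses in this paper.

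Finally I would pass from even integers to arbitrary $b\geq 2$ using monotonicity of the Schatten norms together with complex interpolation between consecutive even moments, and conclude. In practice, because the inequality with this optimal constant is a well-established result, I would simply invoke the corresponding reference rather than reproduce the full combinatorial computation, treating the lemma as an imported tool on the same footing as the concentration bounds of Lemmas~\ref{thm:bound_supremum} and~\ref{thm:bound_expectation}.
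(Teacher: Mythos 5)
Your conclusion matches the paper exactly: the paper does not prove this lemma at all, but simply cites Buchholz (reference \cite{buchholz2001operator}) for it, which is precisely what you say you would do in practice. Your preceding sketch of the moment method (trace expansion, Rademacher pairings, domination by the two extreme pairings, interpolation to general $b\geq 2$) is a faithful outline of how the cited result is actually established, but since the paper treats the lemma as an imported tool, your proposal is in essence the same approach.
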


\fixes{The proof can be found in \cite{buchholz2001operator}.}


\begin{lemma}
    \label{thm:deltas}
    \fixes{Suppose that $b$ is an integer such that $1\leq b \leq n_2 p$ and $n_2p\geq 2 \log n_1$, then
    \begin{align}
        \mathbb{E}_\delta \max \left\{\left\| \sum_{i=1}^{n_1}\sum_{j=1}^{n_2}\delta_{ij}E_{ij}^2\mathbf{e}_i\mathbf{e}_i^\intercal \right\|^b , \left\| \sum_{i=1}^{n_1}\sum_{j=1}^{n_2}\delta_{ij}E_{ij}^2\mathbf{e}_j\mathbf{e}_j^\intercal \right\|^b  \right\} \nonumber \\
        \leq 4 (2 n_1 p \|\mathbf{E}\|_\infty^2)^b. \nonumber
    \end{align}}
\end{lemma}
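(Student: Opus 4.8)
The plan is to exploit the fact that both matrices inside the norms are \emph{diagonal}, so that each spectral norm collapses to a maximum of a sum of independent, bounded random variables. Indeed, $\sum_{i,j}\delta_{ij}E_{ij}^2\mathbf{e}_i\mathbf{e}_i^\intercal$ is an $n_1\times n_1$ diagonal matrix whose $i$-th entry is the row sum $R_i:=\sum_{j=1}^{n_2}\delta_{ij}E_{ij}^2$, and $\sum_{i,j}\delta_{ij}E_{ij}^2\mathbf{e}_j\mathbf{e}_j^\intercal$ is $n_2\times n_2$ diagonal with $j$-th entry the column sum $C_j:=\sum_{i=1}^{n_1}\delta_{ij}E_{ij}^2$. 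Hence the two spectral norms equal $\max_i R_i$ and $\max_j C_j$. Since each term obeys $0\le E_{ij}^2\le\|\mathbf{E}\|_\infty^2$, the means satisfy $\mathbb{E}_\delta R_i=p\sum_j E_{ij}^2\le pn_2\|\mathbf{E}\|_\infty^2$ and $\mathbb{E}_\delta C_j\le pn_1\|\mathbf{E}\|_\infty^2$, both dominated by $\mu^\star:=pn_1\|\mathbf{E}\|_\infty^2$ because $n_1\ge n_2$.

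Using $\max\{a^b,c^b\}\le a^b+c^b$ for nonnegative $a,c$, I would reduce the statement to bounding $\mathbb{E}_\delta(\max_i R_i)^b$ and $\mathbb{E}_\delta(\max_j C_j)^b$ separately, targeting $2(2\mu^\star)^b$ for each so that their sum gives the claimed $4(2\mu^\star)^b$. For a fixed row or column the summands are independent and lie in $[0,\|\mathbf{E}\|_\infty^2]$, so a multiplicative Chernoff (Bernstein) estimate yields $\mathbb{P}(C_j\ge s)\le\exp(-\tfrac{1}{\|\mathbf{E}\|_\infty^2}[\,s\log(s/\mu^\star)-s+\mu^\star\,])$ for $s\ge\mu^\star$, and likewise for $R_i$; a union bound over the at most $n_1$ rows (resp.\ $n_2$ columns) then controls $\mathbb{P}(\max\ge s)$.

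The moment is recovered by tail integration, $\mathbb{E}_\delta(\max_j C_j)^b=\int_0^\infty b\,s^{b-1}\,\mathbb{P}(\max_j C_j\ge s)\,ds$, split at the threshold $s_0=2\mu^\star$. On $[0,s_0]$ the probability is bounded by $1$ and contributes exactly $(2\mu^\star)^b$; on $[s_0,\infty)$ the exponential Chernoff decay makes the remaining integral a bounded multiple of $(2\mu^\star)^b$. Both hypotheses enter here in an essential way: $n_2p\ge 2\log n_1$ forces the mean $\mu^\star$ to be large relative to the logarithm of the number of terms in the union bound, so that the union-bound factor $n_1$ (or $n_2$) is swallowed by the decay $e^{-c\,pn_1}$; and $b\le n_2p$ keeps the polynomial weight $s^{b-1}$ subordinate to the exponential past $s_0$, so the tail integrand is decreasing there and is controlled by its value at the threshold. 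Adding the bulk and tail contributions of the two maxima produces the constant $4$ in front of $(2n_1p\|\mathbf{E}\|_\infty^2)^b$.

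The main obstacle is exactly this concentration-of-the-maximum step, and the delicacy lies in the constants. The naive route $(\max_i R_i)^b\le\sum_i R_i^b=\mathrm{trace}\big((\sum_{ij}\delta_{ij}E_{ij}^2\mathbf{e}_i\mathbf{e}_i^\intercal)^b\big)$ replaces the maximum by the full sum and therefore carries a spurious factor equal to the number of rows (or columns); this already breaks the bound for small $b$, so it cannot be used and the exponential tail estimate is unavoidable. The hard part is to choose the split threshold and then verify, simultaneously using $n_2p\ge 2\log n_1$ and $b\le n_2p$, that the union-bound count and the polynomial factor $s^{b-1}$ are both dominated by the Chernoff decay, so that all the accumulated constants collapse to the stated $4$ and $2$.
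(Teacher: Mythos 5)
First, a point of reference: the paper does not actually prove this lemma at all --- it states it and defers entirely to \cite{candes2009exact} --- so there is no in-paper argument to compare against, only the cited source. Your structural reductions are correct and are the standard route (and essentially the cited one): both matrices are diagonal, so the two spectral norms are $\max_i R_i$ and $\max_j C_j$; each $R_i$, $C_j$ is dominated by $\|\mathbf{E}\|_\infty^2$ times a Binomial variable; $\mathbb{E}\max\{A^b,B^b\}\le \mathbb{E}A^b+\mathbb{E}B^b$; the bulk of the tail integral below $s_0=2\mu^\star$ contributes exactly $(2\mu^\star)^b$; and you are right that the naive bound $\max_i R_i^b\le \sum_i R_i^b$ is unusable.

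The gap is precisely the step you postpone to the end, and it is not mere bookkeeping: it fails quantitatively. At the threshold $s_0=2\mu^\star$, the Bennett/Chernoff exponent in your own tail bound is $n_1p\,h(2)$, where $h(a)=a\ln a-a+1$, so $h(2)=2\ln 2-1\approx 0.386$; for the row sums in the worst case $n_1=n_2$ it is only $n_2p\,h(2)$. The hypothesis $n_2p\ge 2\log n_1$ gives $\log n_1\le\tfrac12 n_2p$, and since $h(2)<\tfrac12$ the union-bound factor is \emph{not} swallowed: $n_1e^{-h(2)\,n_2p}\le n_1^{1-2h(2)}=n_1^{0.227\dots}$, which diverges with $n_1$ rather than being a constant; the weight $b\,s^{b-1}$ with $b$ as large as $n_2p$ then further inflates the region just above $s_0$. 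Worse, the inequality with these exact constants is actually false for large dimensions, so no choice of split threshold can rescue the computation. Take $n_1=n_2$, $p=2\log n_1/n_2$, $b=n_2p$, and $E_{ij}$ constant. The $n_1$ row sums are \emph{independent} $\mathrm{Bin}(n_2,p)$ variables with mean $\mu=n_2p=2\log n_1$, and since the Chernoff bound is tight up to polynomial factors in this regime, $n_1\mathbb{P}(\mathrm{Bin}(n_2,p)\ge a\mu)\approx e^{\mu(1/2-h(a))}$. Hence for any fixed $a<a^*$, where $h(a^*)=\tfrac12$ (so $a^*\approx 2.155$), the maximum exceeds $a\mu$ with probability at least $\tfrac12$ once $n_1$ is large, giving $\mathbb{E}[(\max_i R_i)^b]\ge\tfrac12\,(a\mu\|\mathbf{E}\|_\infty^2)^b$. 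With $a=2.1$ this exceeds $4(2n_1p\|\mathbf{E}\|_\infty^2)^b$ by a factor of order $(1.05)^b=n_1^{2\ln 1.05}\approx n_1^{0.1}\to\infty$. Closing the argument therefore requires changing the constants somewhere: e.g.\ strengthening the sampling hypothesis to $n_2p\ge\kappa\log n_1$ with $\kappa>1/h(2)\approx 2.59$, or enlarging the threshold to some $a$ with $h(a)>\tfrac12$ and weakening the conclusion accordingly. So your plan is the natural one, but the ``hard part'' you flagged --- verifying that the union-bound count and the factor $s^{b-1}$ are dominated by the Chernoff decay at threshold $2\mu^\star$ under exactly these hypotheses --- is not just hard, it is impossible as stated; any complete proof (including the cited one) must run on adjusted constants.
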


\fixes{The proof can be found in \cite{candes2009exact}.}

\subsubsection{Lemma \ref{thm:nuc_norm_opt}}
\label{sec:proof_nuc_norm_opt}

\begin{proof}
    We start by perturbing $\mathbf{X}_0$ by any $\mathbf{H}$ so that $\mathbf{X}_0+\mathbf{H}$ remains feasible to Problem \eqref{eq:nuc_norm_prob}.
    Since $\mathbf{X}_0$ is already feasible, then it must be that $\mathcal{R}_\Omega(\mathbf{H})=\mathbf{0}$ and $\langle \mathbf{A}^{(l)}, \mathbf{H} \rangle = 0$ for all $l\in\{1,\dots,\fixes{h}\}$.
    
    Take any subgradient $\mathbf{Y}^0$ of the nuclear norm at $\mathbf{X}_0$ which also satisfies
    \begin{subequations}
        \begin{align}
            \mathcal{P}_\mathcal{T}(\mathbf{Y}^0) & = \sum_{k=1}^{r}\mathbf{u}_k\mathbf{v}_k^\intercal \\
            \left\|\mathcal{P}_{\mathcal{T}^\perp}(\mathbf{Y}^0)\right\| & \leq 1.
        \end{align}
    \end{subequations}
    Let us also define $\mathbf{W}^0:=\mathcal{P}_{\mathcal{T}^\perp}(\mathbf{Y}^0)$ and $\mathbf{W}:=\mathcal{P}_{\mathcal{T}^\perp}(\mathbf{Y})$ for $\mathbf{Y}$ in the lemma's statement.
    This allows us to write $\mathbf{Y}^0=\mathcal{P}_\mathcal{T}(\mathbf{Y}^0)+\mathcal{P}_{\mathcal{T}^\perp}(\mathbf{Y}^0)=\mathcal{P}_\mathcal{T}(\mathbf{Y}^0)+\mathbf{W}^0$ and $\mathbf{Y}=\mathcal{P}_\mathcal{T}(\mathbf{Y})+\mathcal{P}_{\mathcal{T}^\perp}(\mathbf{Y})=\mathcal{P}_\mathcal{T}(\mathbf{Y})+\mathbf{W}$.
    Since $\mathcal{P}_\mathcal{T}(\mathbf{Y}^0)=\mathcal{P}_\mathcal{T}(\mathbf{Y})$, then we have
    \begin{align}
        \mathbf{Y}^0 = \mathbf{W}^0 - \mathbf{W} + \mathbf{Y}. \nonumber
    \end{align}
    
    Starting from the definition of the subgradient we have
    \begin{align}
        \|\mathbf{X}_0+\mathbf{H}\|_* & \geq \|\mathbf{X}_0\|_* + \langle\mathbf{Y}^0,\mathbf{H}\rangle \nonumber \\
        & = \|\mathbf{X}_0\|_* + \langle\mathbf{W}^0 - \mathbf{W} + \mathbf{Y},\mathbf{H}\rangle \nonumber \\
        & = \|\mathbf{X}_0\|_* + \langle\mathbf{W}^0 - \mathbf{W},\mathbf{H}\rangle \nonumber \\
        & \quad + \left\langle \mathcal{R}_\Omega^\intercal\boldsymbol{\lambda}+\sum_{l=1}^{\fixes{h}} \gamma_l\mathbf{A}^{(l)}, \mathbf{H} \right\rangle \nonumber \\
        & = \|\mathbf{X}_0\|_* + \langle\mathbf{W}^0 - \mathbf{W},\mathbf{H}\rangle \nonumber \\
        & \quad + \left\langle \mathcal{R}_\Omega^\intercal\boldsymbol{\lambda}, \mathbf{H} \right\rangle+\sum_{l=1}^{\fixes{h}} \gamma_l\left\langle\mathbf{A}^{(l)}, \mathbf{H} \right\rangle \nonumber \\
        & = \|\mathbf{X}_0\|_* + \langle\mathbf{W}^0 - \mathbf{W},\mathbf{H}\rangle \label{eq:nuc_norm_opt1}
    \end{align}
    The first equality comes from applying the previous equation and the second comes from applying the definition of $\mathbf{Y}$ given in the lemma's statement.
    The last equality comes from the fact that $\mathcal{R}_\Omega(\mathbf{H})=\mathbf{0}$ and $\langle \mathbf{A}^{(l)}, \mathbf{H} \rangle = 0$ for all $l\in\{1,\dots,\fixes{h}\}$.
    
    Let $\mathbf{Z}$ be any matrix that satisfies the following conditions
    \begin{subequations}
        \begin{align}
            \mathcal{P}_{\mathcal{T}^\perp}(\mathbf{Z}) & = \mathbf{W}^0 \label{eq:nuc_norm_opt2a} \\
            \|\mathbf{Z}\| & \leq 1 \label{eq:nuc_norm_opt2b} \\
            \langle \mathbf{Z}, \mathcal{P}_{\mathcal{T}^\perp}(\mathbf{H})\rangle & = \|\mathcal{P}_{\mathcal{T}^\perp}(\mathbf{H})\|_*. \label{eq:nuc_norm_opt2c}
        \end{align}
    \end{subequations}
    Since $\mathbf{W}^0$ and $\mathbf{W}$ are in $\mathcal{T}^\perp$ and the projection operator is self-adjoint, we have
    \begin{align}
        \langle \mathbf{W}^0 - \mathbf{W}, \mathbf{H}\rangle & = \langle \mathcal{P}_{\mathcal{T}^\perp}(\mathbf{W}^0 - \mathbf{W}), \mathbf{H}\rangle \nonumber \\
        & = \langle \mathbf{W}^0 - \mathbf{W}, \mathcal{P}_{\mathcal{T}^\perp}(\mathbf{H})\rangle \nonumber \\
        & = \langle \mathbf{W}^0, \mathcal{P}_{\mathcal{T}^\perp}(\mathbf{H})\rangle - \langle \mathbf{W}, \mathcal{P}_{\mathcal{T}^\perp}(\mathbf{H})\rangle \nonumber \\
        & \geq \langle \mathbf{W}^0, \mathcal{P}_{\mathcal{T}^\perp}(\mathbf{H})\rangle - \|\mathbf{W}\| \|\mathcal{P}_{\mathcal{T}^\perp}(\mathbf{H})\|_* \nonumber \\
        & = \langle \mathcal{P}_{\mathcal{T}^\perp}(\mathbf{Z}), \mathcal{P}_{\mathcal{T}^\perp}(\mathbf{H})\rangle - \|\mathbf{W}\| \|\mathcal{P}_{\mathcal{T}^\perp}(\mathbf{H})\|_* \nonumber \\
        & = \langle \mathbf{Z}, \mathcal{P}_{\mathcal{T}^\perp}(\mathbf{H})\rangle - \|\mathbf{W}\| \|\mathcal{P}_{\mathcal{T}^\perp}(\mathbf{H})\|_* \nonumber \\
        & = \|\mathcal{P}_{\mathcal{T}^\perp}(\mathbf{H})\|_* - \|\mathbf{W}\| \|\mathcal{P}_{\mathcal{T}^\perp}(\mathbf{H})\|_* \label{eq:nuc_norm_opt3}
    \end{align}
    The first inequality applies Lemma \ref{thm:dualnorm_nuc_spec} to the last term.
    The fourth equality applies \eqref{eq:nuc_norm_opt2a} and the last applies \eqref{eq:nuc_norm_opt2c}.
    
    Finally, plugging \eqref{eq:nuc_norm_opt3} into \eqref{eq:nuc_norm_opt1} gives the following
    \begin{align}
        \|\mathbf{X}_0+\mathbf{H}\|_* \geq \|\mathbf{X}_0\|_* + \left(1 - \|\mathbf{W}\|\right)\|\mathcal{P}_{\mathcal{T}^\perp}(\mathbf{H})\|_*.
    \end{align}
    Since $\|\mathbf{W}\|<1$ from the lemma's statement, then $\|\mathbf{X}_0+\mathbf{H}\|_* > \|\mathbf{X}_0\|_*$ unless $\|\mathcal{P}_{\mathcal{T}^\perp}(\mathbf{H})\|_*=0$.
    This results in $\mathcal{P}_{\mathcal{T}^\perp}(\mathbf{H})=\mathbf{0}$ which means that $\mathbf{H}$ is in $\mathcal{T}$.
    Since $\mathcal{R}_\Omega(\mathbf{H})=\mathbf{0}$, then $\mathbf{H}=\mathbf{0}$ by the given injectivity condition \fixes{along with the fact that $\mathbf{0}\in\mathcal{T}$.}
    Therefore, $\|\mathbf{X}_0+\mathbf{H}\|_* > \|\mathbf{X}_0\|_*$ unless $\mathbf{H}=\mathbf{0}$ which makes $\mathbf{X}_0$ the unique optimal feasible solution to Problem \eqref{eq:nuc_norm_prob}.
\end{proof}

\subsubsection{Lemma \ref{thm:inverse_condition}}
\label{sec:proof_inverse_condition}

\begin{proof}
    \fixes{From summing the conditions in the lemma's statement and Lemma \ref{thm:bound_opnormH}, we have that:
    \begin{align}
        \frac{1}{2} & \geq \frac{1}{p\fixes{+q}}\left\|\mathcal{P}_\mathcal{T}\left(\mathcal{P}_\Omega\fixes{+q\mathcal{P}_\mathcal{Q}}\right)\mathcal{P}_\mathcal{T}-(p\fixes{+q})\mathcal{P}_\mathcal{T}\right\| \nonumber \\
        & \geq \frac{1}{p\fixes{+q}}\frac{\left\|\mathcal{P}_\mathcal{T}\left(\mathcal{P}_\Omega\fixes{+q\mathcal{P}_\mathcal{Q}}\right)\mathcal{P}_\mathcal{T}(\mathbf{X})-(p\fixes{+q})\mathcal{P}_\mathcal{T}(\mathbf{X})\right\|_F}{\|\mathcal{P}_\mathcal{T}(\mathbf{X})\|_F} \nonumber
    \end{align}
    which rearranged is
    \begin{align}
        \left\|\mathcal{P}_\mathcal{T}\left(\mathcal{P}_\Omega\fixes{+q\mathcal{P}_\mathcal{Q}}\right)\mathcal{P}_\mathcal{T}(\mathbf{X})-(p\fixes{+q})\mathcal{P}_\mathcal{T}(\mathbf{X})\right\|_F \nonumber \\
        \leq \frac{1}{2}(p+q)\|\mathcal{P}_\mathcal{T}(\mathbf{X})\|_F. \label{eq:inverse_condition1}
    \end{align}
    By the reverse triangle inequality it becomes:
    \begin{align}
        \left\|\mathcal{P}_\mathcal{T}\left(\mathcal{P}_\Omega+q\mathcal{P}_\mathcal{Q}\right)\mathcal{P}_\mathcal{T}(\mathbf{X})\right\|_F - (p+q)\left\|\mathcal{P}_\mathcal{T}(\mathbf{X})\right\|_F \nonumber \\
        \leq \frac{1}{2}(p+q)\|\mathcal{P}_\mathcal{T}(\mathbf{X})\|_F \nonumber
    \end{align} which reduces to \eqref{eq:inverse_condition_b}.
    By taking the reverse triangle inequality of \eqref{eq:inverse_condition1} in the other direction, we have:
    \begin{align}
        (p+q)\left\|\mathcal{P}_\mathcal{T}(\mathbf{X})\right\|_F - \left\|\mathcal{P}_\mathcal{T}\left(\mathcal{P}_\Omega+q\mathcal{P}_\mathcal{Q}\right)\mathcal{P}_\mathcal{T}(\mathbf{X})\right\|_F \nonumber \\
        \leq \frac{1}{2}(p+q)\|\mathcal{P}_\mathcal{T}(\mathbf{X})\|_F \nonumber
    \end{align} which reduces to \eqref{eq:inverse_condition_a}.}
    
    \fixes{From the first condition in the lemma's statement and \eqref{eq:Hnorm_2terms1_final}, we have that:
    \begin{align}
        \frac{1}{4} & \geq \frac{1}{p+q}\left\|\mathcal{P}_\mathcal{T}\left(\mathcal{P}_\Omega-p\mathbf{I}\right)\mathcal{P}_\mathcal{T}\right\| \nonumber \\
        & \geq \frac{1}{p+q} \frac{\left\|\mathcal{P}_\mathcal{T}\left(\mathcal{P}_\Omega-p\mathbf{I}\right)\mathcal{P}_\mathcal{T}(\mathbf{X})\right\|_F}{\|\mathcal{P}_\mathcal{T}(\mathbf{X})\|_F} \nonumber
    \end{align}
    which rearranged is
    \begin{align}
         \frac{1}{4}(p+q)\|\mathcal{P}_\mathcal{T}(\mathbf{X})\|_F & \geq \left\|\mathcal{P}_\mathcal{T}\left(\mathcal{P}_\Omega-p\mathbf{I}\right)\mathcal{P}_\mathcal{T}(\mathbf{X})\right\|_F \nonumber \\
         & \geq \left\|\mathcal{P}_\mathcal{T}\mathcal{P}_\Omega\mathcal{P}_\mathcal{T}(\mathbf{X})\right\|_F - p\|\mathcal{P}_\mathcal{T}(\mathbf{X})\|_F \nonumber
    \end{align}
    where the second inequality is from the reverse triangle inequality.
    From there, it reduces to:
    \begin{align}
        \left\|\mathcal{P}_\mathcal{T}\mathcal{P}_\Omega\mathcal{P}_\mathcal{T}(\mathbf{X})\right\|_F \leq \frac{1}{4}(5p+q)\|\mathcal{P}_\mathcal{T}(\mathbf{X})\|_F. \label{eq:inverse_condition2}
    \end{align}
    Also, we have:
    \begin{align}
        \|\mathcal{P}_\Omega\mathcal{P}_\mathcal{T}(\mathbf{X})\|_F^2 & = \langle \mathcal{P}_\Omega\mathcal{P}_\mathcal{T}(\mathbf{X}),\mathcal{P}_\Omega\mathcal{P}_\mathcal{T}(\mathbf{X}) \rangle \nonumber \\
        & = \langle \mathcal{P}_\mathcal{T}(\mathbf{X}),\mathcal{P}_\Omega\mathcal{P}_\mathcal{T}(\mathbf{X}) \rangle \nonumber \\
        & = \langle \mathcal{P}_\mathcal{T}(\mathbf{X}),\mathcal{P}_\mathcal{T}\mathcal{P}_\Omega\mathcal{P}_\mathcal{T}(\mathbf{X}) \rangle \nonumber \\
        & \leq \|\mathcal{P}_\mathcal{T}(\mathbf{X})\|_F \|\mathcal{P}_\mathcal{T}\mathcal{P}_\Omega\mathcal{P}_\mathcal{T}(\mathbf{X})\|_F \nonumber \\
        & \leq \frac{5p+q}{4} \|\mathcal{P}_\mathcal{T}(\mathbf{X})\|_F^2 \label{eq:inverse_condition3}
    \end{align}
    where the last inequality came from applying \eqref{eq:inverse_condition2}.}
    
    \fixes{From the second condition in the lemma's statement and \eqref{eq:Hnorm_2terms2_final}, we have that:
    \begin{align}
        \frac{1}{4} & \geq \frac{q}{p+q}\left\|\mathcal{P}_\mathcal{T}(\mathcal{P}_\mathcal{Q}-\mathbf{I})\mathcal{P}_\mathcal{T}\right\| \nonumber \\
        & \geq \frac{q}{p+q} \frac{\left\|\mathcal{P}_\mathcal{T}(\mathcal{P}_\mathcal{Q}-\mathbf{I})\mathcal{P}_\mathcal{T}(\mathbf{X})\right\|_F}{\|\mathcal{P}_\mathcal{T}(\mathbf{X})\|_F} \nonumber
    \end{align}
    which rearranged is
    \begin{align}
         \frac{1}{4}(p+q)\|\mathcal{P}_\mathcal{T}(\mathbf{X})\|_F & \geq q\left\|\mathcal{P}_\mathcal{T}(\mathcal{P}_\mathcal{Q}-\mathbf{I})\mathcal{P}_\mathcal{T}(\mathbf{X})\right\|_F. \nonumber \\
         & \geq q\left\|\mathcal{P}_\mathcal{T}\mathcal{P}_\mathcal{Q}\mathcal{P}_\mathcal{T}(\mathbf{X})\right\|_F - q\|\mathcal{P}_\mathcal{T}(\mathbf{X})\|_F \nonumber
    \end{align}
    where the second inequality is from the reverse triangle inequality.
    From there, it reduces to:
    \begin{align}
        \left\|\mathcal{P}_\mathcal{T}\mathcal{P}_\mathcal{Q}\mathcal{P}_\mathcal{T}(\mathbf{X})\right\|_F \leq \frac{1}{4q}(p+5q)\|\mathcal{P}_\mathcal{T}(\mathbf{X})\|_F. \label{eq:inverse_condition4}
    \end{align}
    Also, we have:
    \begin{align}
        \|q\mathcal{P}_\mathcal{Q}\mathcal{P}_\mathcal{T}(\mathbf{X})\|_F^2 & = q^2\langle \mathcal{P}_\mathcal{Q}\mathcal{P}_\mathcal{T}(\mathbf{X}),\mathcal{P}_\mathcal{Q}\mathcal{P}_\mathcal{T}(\mathbf{X}) \rangle \nonumber \\
        & = q^2\langle \mathcal{P}_\mathcal{T}(\mathbf{X}),\mathcal{P}_\mathcal{Q}\mathcal{P}_\mathcal{T}(\mathbf{X}) \rangle \nonumber \\
        & = q^2\langle \mathcal{P}_\mathcal{T}(\mathbf{X}),\mathcal{P}_\mathcal{T}\mathcal{P}_\mathcal{Q}\mathcal{P}_\mathcal{T}(\mathbf{X}) \rangle \nonumber \\
        & \leq q^2 \|\mathcal{P}_\mathcal{T}(\mathbf{X})\|_F \|\mathcal{P}_\mathcal{T}\mathcal{P}_\mathcal{Q}\mathcal{P}_\mathcal{T}(\mathbf{X})\|_F \nonumber \\
        & \leq \frac{q(p+5q)}{4} \|\mathcal{P}_\mathcal{T}(\mathbf{X})\|_F^2 \label{eq:inverse_condition5}
    \end{align}
    where the last inequality came from applying \eqref{eq:inverse_condition4}.}
    
    \fixes{Finally, taking the square root of \eqref{eq:inverse_condition3} and \eqref{eq:inverse_condition5} allows the following:
    \begin{align}
        \|(\mathcal{P}_\Omega+q\mathcal{P}_\mathcal{Q})\mathcal{P}_\mathcal{T}(\mathbf{X})\|_F \quad\quad\quad\quad\quad\quad\quad\quad\quad\quad\quad\quad\quad\quad \nonumber
    \end{align}
    \begin{align}
        & \leq \|\mathcal{P}_\Omega\mathcal{P}_\mathcal{T}(\mathbf{X})\|_F +  \|q\mathcal{P}_\mathcal{Q}\mathcal{P}_\mathcal{T}(\mathbf{X})\|_F \nonumber \\
        & \leq \sqrt{\frac{5p+q}{4}} \|\mathcal{P}_\mathcal{T}(\mathbf{X})\|_F + \sqrt{\frac{q(p+5q)}{4}} \|\mathcal{P}_\mathcal{T}(\mathbf{X})\|_F \nonumber \\
        & \leq \frac{1+\sqrt{q}}{2}\sqrt{5(p+q)}\|\mathcal{P}_\mathcal{T}(\mathbf{X})\|_F \nonumber
    \end{align}
    which is exactly \eqref{eq:inverse_condition_c}.}
\end{proof}

\subsubsection{Lemma \ref{thm:bound_Hsum}}
\label{sec:proof_bound_Hsum}

\begin{proof}
    Starting with the LHS of \eqref{eq:spectral_norm_sum}, we have the following
    \begin{align}
        \frac{1}{p\fixes{+q}}\left\|\mathcal{P}_{\mathcal{T}^\perp}(\mathcal{P}_\Omega\fixes{+q\mathcal{P}_\mathcal{Q}})\mathcal{P}_\mathcal{T}\sum_{k=k_0}^\infty \mathcal{H}^k(\fixes{\mathbf{E}})\right\| \quad\quad\quad\quad\quad\quad \nonumber
    \end{align}
    \begin{align}
        & \leq \frac{1}{p\fixes{+q}}\left\|(\mathcal{P}_\Omega\fixes{+q\mathcal{P}_\mathcal{Q}})\mathcal{P}_\mathcal{T}\sum_{k=k_0}^\infty \mathcal{H}^k(\fixes{\mathbf{E}})\right\| \nonumber \\
        & \leq \frac{1}{p\fixes{+q}}\left\|(\mathcal{P}_\Omega\fixes{+q\mathcal{P}_\mathcal{Q}})\mathcal{P}_\mathcal{T}\sum_{k=k_0}^\infty \mathcal{H}^k(\fixes{\mathbf{E}})\right\|_F \nonumber \\
        & \leq \fixes{(1+\sqrt{q})} \sqrt{\frac{\fixes{5}}{2(p\fixes{+q})}}\left\|\mathcal{P}_\mathcal{T}\sum_{k=k_0}^\infty \mathcal{H}^k(\fixes{\mathbf{E}})\right\|_F \nonumber \\
        & = \fixes{(1+\sqrt{q})} \sqrt{\frac{\fixes{5}n_1n_2}{2(m\fixes{+qn_1n_2})}}\left\|\sum_{k=k_0}^\infty \mathcal{H}^k(\fixes{\mathbf{E}})\right\|_F. \label{eq:bound_Hsum_1}
    \end{align}
    The first inequality comes from the fact that the projection operator is a non-expansive mapping and the second comes from the Frobenius norm being at least as large as the spectral norm.
    The third comes from applying \fixes{the third inequality} of Lemma \ref{thm:inverse_condition}.
    The equality comes from the fact that $\mathcal{H}$ in \eqref{eq:H_defn} is already projected into $\mathcal{T}$ and $p=\frac{m}{n_1n_2}$.
    
    From here, we focus on bounding the Frobenius norm on the RHS of \eqref{eq:bound_Hsum_1}
    \begin{align}
        \left\|\sum_{k=k_0}^\infty \mathcal{H}^k(\fixes{\mathbf{E}})\right\|_F \quad\quad\quad\quad\quad\quad\quad\quad\quad\quad\quad\quad\quad\quad\quad\quad\quad\quad \nonumber 
    \end{align}
    \begin{align}
        & \leq \sum_{k=k_0}^\infty \left\|\mathcal{H}^k(\fixes{\mathbf{E}})\right\|_F \nonumber \\
        & \leq \left\|\fixes{\mathbf{E}}\right\|_F\sum_{k=k_0}^\infty \|\mathcal{H}\|^k \nonumber \\
        & = \|\fixes{\mathbf{E}}\|_F\frac{\|\mathcal{H}\|^{k_0}}{1-\|\mathcal{H}\|} \nonumber \\
        & \leq 2\|\fixes{\mathbf{E}}\|_F\|\mathcal{H}\|^{k_0} \nonumber \\
        & = \fixes{2\sqrt{r}\|\mathcal{H}\|^{k_0}} \nonumber \\
        & \leq 2\fixes{\sqrt{r}} \left(\frac{C_R \sqrt{\mu_0\beta rn_1\log n_1}\fixes{+n_1n_2\sqrt{\mu_{\mathcal{Q}^\perp}\mu_0q}}}{\sqrt{m\fixes{+qn_1n_2}}}\right)^{k_0} \label{eq:bound_Hsum_2}
    \end{align}
    The first inequality comes from the triangle inequality.
    The second inequality comes from one of the equivalent definitions of the operator norm under the Frobenius norm.
    The first equality comes from the geometric series as long as $\|\mathcal{H}\|<1$.
    The third inequality is true as long as $\|\mathcal{H}\|\leq\frac{1}{2}$ \fixes{which is true from summing the conditions in the lemma's statement and the bound resulting from} Lemma \ref{thm:bound_opnormH}.
    \fixes{The second equality comes from the fact that $\|\mathbf{E}\|_F=\sqrt{r}$.}
    The fourth inequality comes from applying Lemma \ref{thm:bound_opnormH} to $\|\mathcal{H}\|$.
    
    Finally, plugging \eqref{eq:bound_Hsum_2} into \eqref{eq:bound_Hsum_1} gets the resultant with the same probability as Lemmas \ref{thm:inverse_condition} and \ref{thm:bound_opnormH}.
\end{proof}

\subsubsection{Lemma \ref{thm:bound_H0}}
\label{sec:proof_bound_H0}

\begin{proof}
    \fixes{We start by splitting the LHS of \eqref{eq:spectral_norm_H0} into two terms and analyzing each one separately
    \begin{align}
        \frac{1}{p\fixes{+q}}\left\|\mathcal{P}_{\mathcal{T}^\perp}(\mathcal{P}_\Omega\fixes{+q\mathcal{P}_\mathcal{Q}})\mathcal{P}_\mathcal{T}(\mathbf{E})\right\| \quad\quad\quad\quad\quad\quad\quad\quad\quad \nonumber
    \end{align}
    \begin{align}
        & = \frac{1}{p+q}\left\|\mathcal{P}_{\mathcal{T}^\perp}(\mathcal{P}_\Omega\fixes{+q\mathcal{P}_\mathcal{Q}}-(p+q)\mathbf{I})\mathcal{P}_\mathcal{T}(\mathbf{E})\right\| \nonumber \\
        & = \frac{1}{p+q}\left\|\mathcal{P}_{\mathcal{T}^\perp}(\mathcal{P}_\Omega\fixes{+q\mathcal{P}_\mathcal{Q}}-(p+q)\mathbf{I})(\mathbf{E})\right\| \nonumber \\
        & \leq \frac{1}{p+q}\left\|(\mathcal{P}_\Omega\fixes{+q\mathcal{P}_\mathcal{Q}}-(p+q)\mathbf{I})(\mathbf{E})\right\| \nonumber \\
        & \leq \frac{1}{p+q}\left(\left\|(\mathcal{P}_\Omega-p\mathbf{I})(\mathbf{E})\right\|+q\left\|\mathcal{P}_{\mathcal{Q}^\perp}(\mathbf{E})\right\|\right) \label{eq:H0_bound1}
    \end{align}
    where the first equality comes from the fact that $\mathcal{P}_{\mathcal{T}^\perp}\mathcal{P}_\mathcal{T}=\mathbf{0}$ and the second comes from the fact that $\mathbf{E}\in\mathcal{T}$.
    The first inequality comes from the fact that the projection operation is non-expansive.
    The second inequality comes from applying the triangle inequality.}
    
    \fixes{First, we show the proofs of two simple identities that will be used later.
    The first shows that the Schatten b-norm is at least as large as than the spectral norm:
    \begin{align}
        \|\mathbf{X}\| & = \sup_k \{\sigma_k(\mathbf{X})\} \nonumber \\
        & = \left(\sup_k \{(\sigma_k(\mathbf{X}))^b\}\right)^{\frac{1}{b}} \nonumber \\
        & \leq \left(\sum_{k=1}^{n_2}(\sigma_k(\mathbf{X}))^b\right)^{\frac{1}{b}} = \|\mathbf{X}\|_{S_b}. \label{eq:H0_bound2}
    \end{align}
    The second shows that the Schatten b-norm is at most as large as a multiple of the spectral norm if $b\geq \log n_2$:
    \begin{align}
        \|\mathbf{X}\|_{S_b} & = \left(\sum_{k=1}^{n_2}(\sigma_k(\mathbf{X}))^b\right)^{\frac{1}{b}} \nonumber \\
        & \leq \left(n_2\sup_k \{(\sigma_k(\mathbf{X}))^b\}\right)^{\frac{1}{b}} \nonumber \\
        & = n_2^{\frac{1}{b}} \sup_k \{\sigma_k(\mathbf{X})\} \nonumber \\
        & \leq e  \sup_k \{\sigma_k(\mathbf{X})\} = e \|\mathbf{X}\|. \label{eq:H0_bound3}
    \end{align}}
    
    \fixes{Define the inside of the first term of \eqref{eq:H0_bound1} as
    \begin{align}
        \mathbf{S} & :=\frac{1}{p+q}(\mathcal{P}_\Omega-p\mathbf{I})(\mathbf{E}) \nonumber \\
        & = \frac{1}{p+q}\sum_{i=1}^{n_1}\sum_{j=1}^{n_2}(\delta_{ij}-p)E_{ij}\mathbf{e}_i\mathbf{e}_j^\intercal \nonumber
    \end{align}
    where the equality puts it into the Bernoulli sampling model form.
    Let $\mathbf{S}'$ with $\delta'_{ij}$ be an independent copy of $\mathbf{S}$, so that we have:
    \begin{align}
        \mathbf{S}-\mathbf{S}' = \frac{1}{p+q}\sum_{i=1}^{n_1}\sum_{j=1}^{n_2}(\delta_{ij}-\delta'_{ij})E_{ij}\mathbf{e}_i\mathbf{e}_j^\intercal. \nonumber
    \end{align}
    Notice that by symmetry
    \begin{align}
        \mathbf{S}_\epsilon-\mathbf{S}'_\epsilon = \frac{1}{p+q}\sum_{i=1}^{n_1}\sum_{j=1}^{n_2}\epsilon_{ij}(\delta_{ij}-\delta'_{ij})E_{ij}\mathbf{e}_i\mathbf{e}_j^\intercal \nonumber
    \end{align}
    has the same distribution as $\mathbf{S}-\mathbf{S}'$ where $\epsilon_{ij}$ is an independent Rademacher sequence,
    \begin{align}
        \mathbf{S}_\epsilon := \frac{1}{p+q}\sum_{i=1}^{n_1}\sum_{j=1}^{n_2}\epsilon_{ij}\delta_{ij}E_{ij}\mathbf{e}_i\mathbf{e}_j^\intercal, \nonumber
    \end{align}
    and $\mathbf{S}'_\epsilon$ is a copy of $\mathbf{S}_\epsilon$ with independent $\delta'_{ij}$.}
    
    \fixes{By Jensen's inequality and the fact that $\mathbb{E}(\mathbf{S}')=\mathbf{0}$, we have
    \begin{align}
        \mathbb{E}\left(\|\mathbf{S}\|^b\right) & = \mathbb{E}\left(\|\mathbf{S}-\mathbb{E}(\mathbf{S}')\|^b\right) \nonumber \\
        & \leq \mathbb{E}\left(\|\mathbf{S}-\mathbf{S}'\|^b\right) \label{eq:H0_bound4}
    \end{align}
    and by the triangle inequality we have
    \begin{align}
        \left(\mathbb{E}\left(\|\mathbf{S}_\epsilon-\mathbf{S}'_\epsilon\|^b\right)\right)^{\frac{1}{b}} & \leq \left(\mathbb{E}\left(\|\mathbf{S}_\epsilon\|^b\right)\right)^{\frac{1}{b}} + \left(\mathbb{E}\left(\|\mathbf{S}'_\epsilon\|^b\right)\right)^{\frac{1}{b}} \nonumber \\
        & = 2 \left(\mathbb{E}\left(\|\mathbf{S}_\epsilon\|^b\right)\right)^{\frac{1}{b}}. \label{eq:H0_bound5}
    \end{align}
    From the fact that the distributions of $\mathbf{S}-\mathbf{S}'$ and $\mathbf{S}_\epsilon-\mathbf{S}'_\epsilon$ are equivalent, we have
    \begin{align}
        \left(\mathbb{E}\left(\|\mathbf{S}\|^b\right)\right)^{\frac{1}{b}} & \leq \left(\mathbb{E}\left(\|\mathbf{S}-\mathbf{S}'\|^b\right)\right)^{\frac{1}{b}} \nonumber \\
        & = \left(\mathbb{E}\left(\|\mathbf{S}_\epsilon-\mathbf{S}'_\epsilon\|^b\right)\right)^{\frac{1}{b}} \nonumber \\
        & \leq 2 \left(\mathbb{E}\left(\|\mathbf{S}_\epsilon\|^b\right)\right)^{\frac{1}{b}} \nonumber \\
        & \leq 2 \left(\mathbb{E}\left(\|\mathbf{S}_\epsilon\|_{S_b}^b\right)\right)^{\frac{1}{b}} \nonumber \\
        & = 2 \left(\mathbb{E}\left(\|\mathbf{S}_\epsilon\|_{S_b}^b\right)\right)^{\frac{1}{b}\frac{b'}{b}\frac{b}{b'}} \nonumber \\
        & \leq 2 \left(\mathbb{E}\left(\|\mathbf{S}_\epsilon\|_{S_{b'}}^{b'}\right)\right)^{\frac{1}{b'}} \nonumber \\
        & = 2 \left(\mathbb{E}_\delta\mathbb{E}_\epsilon\left(\|\mathbf{S}_\epsilon\|_{S_{b'}}^{b'}\right)\right)^{\frac{1}{b'}}. \label{eq:H0_bound6}
    \end{align}
    The first inequality comes from \eqref{eq:H0_bound4}, the second comes from \eqref{eq:H0_bound5}, and the third comes from \eqref{eq:H0_bound2}.
    The fourth comes from applying Jensen's inequality with the convex function $f(x)=x^{\frac{b'}{b}}$ for $b'\geq b$.}
    
    \fixes{With the use of Lemma \ref{thm:khintchine} and Jensen's inequality with the concave function $f(x)=x^{\frac{1}{b'}}$ for $b'\geq 1$, we have
    \begin{align}
        \left(\mathbb{E}_\delta\mathbb{E}_\epsilon\left(\|\mathbf{S}_\epsilon\|_{S_{b'}}^{b'}\right)\right)^{\frac{1}{b'}} \quad\quad\quad\quad\quad\quad\quad\quad\quad\quad\quad\quad\quad\quad\quad \nonumber
    \end{align}
    \begin{align}
        & \leq C_K  \frac{\sqrt{b'}}{p+q} \Bigg(\mathbb{E}_\delta\max \Bigg\{ \Bigg\|\left(\sum_{i=1}^{n_1}\sum_{j=1}^{n_2}\delta_{ij}E_{ij}^2\mathbf{e}_i\mathbf{e}_i^\intercal\right)^{\frac{1}{2}}\Bigg\|_{S_{b'}}^{b'}, \nonumber \\
        & \quad\quad\quad\quad\quad\quad\quad\quad\quad \Bigg\|\left(\sum_{i=1}^{n_1}\sum_{j=1}^{n_2}\delta_{ij}E_{ij}^2\mathbf{e}_j\mathbf{e}_j^\intercal\right)^{\frac{1}{2}}\Bigg\|_{S_{b'}}^{b'} \Bigg\}\Bigg)^{\frac{1}{b'}} \nonumber \\
        & \leq C_K  \frac{e\sqrt{b'}}{p+q} \Bigg(\mathbb{E}_\delta\max \Bigg\{ \Bigg\|\left(\sum_{i=1}^{n_1}\sum_{j=1}^{n_2}\delta_{ij}E_{ij}^2\mathbf{e}_i\mathbf{e}_i^\intercal\right)^{\frac{1}{2}}\Bigg\|^{b'}, \nonumber \\
        & \quad\quad\quad\quad\quad\quad\quad\quad\quad \Bigg\|\left(\sum_{i=1}^{n_1}\sum_{j=1}^{n_2}\delta_{ij}E_{ij}^2\mathbf{e}_j\mathbf{e}_j^\intercal\right)^{\frac{1}{2}}\Bigg\|^{b'} \Bigg\}\Bigg)^{\frac{1}{b'}} \nonumber \\
        & = C_K  \frac{e\sqrt{b'}}{p+q} \Bigg(\mathbb{E}_\delta\max \Bigg\{ \Bigg\|\sum_{i=1}^{n_1}\sum_{j=1}^{n_2}\delta_{ij}E_{ij}^2\mathbf{e}_i\mathbf{e}_i^\intercal\Bigg\|^{\frac{b'}{2}}, \nonumber \\
        & \quad\quad\quad\quad\quad\quad\quad\quad\quad \Bigg\|\sum_{i=1}^{n_1}\sum_{j=1}^{n_2}\delta_{ij}E_{ij}^2\mathbf{e}_j\mathbf{e}_j^\intercal\Bigg\|^{\frac{b'}{2}} \Bigg\}\Bigg)^{\frac{1}{b'}} \nonumber \\
        & \leq  C_K  \frac{e\sqrt{b'}}{p+q}\left(4 (2 n_1 p \|\mathbf{E}\|_\infty^2)^{\frac{b'}{2}}\right)^{\frac{1}{b'}} \nonumber \\
        & = \sqrt{2} C_K  \frac{e\|\mathbf{E}\|_\infty\sqrt{b'pn_1}}{p+q}4^{\frac{1}{b'}}. \label{eq:H0_bound7}
    \end{align}
    The second inequality comes from \eqref{eq:H0_bound3} assuming that $b'\geq \log n_2$.
    The first equality comes from the fact that matrices resulting from the summations are diagonal matrices.
    The third inequality comes from applying Lemma \ref{thm:deltas} since the assumption of $\max\{2,\beta\}n_1\log n_1 \leq m$ satisfies the condition of that Lemma.}
    
    \fixes{Putting together \eqref{eq:H0_bound6} and \eqref{eq:H0_bound7} and setting $b=b'=\beta \log n_1$ for $\beta\geq 1$ gives
    \begin{align}
        \left(\mathbb{E}\left(\|\mathbf{S}\|^b\right)\right)^{\frac{1}{b}} & \leq 2\sqrt{2} C_K  \frac{e\|\mathbf{E}\|_\infty\sqrt{b'pn_1}}{p+q}4^{\frac{1}{b'}} \nonumber \\
        & = C_K e\|\mathbf{E}\|_\infty \frac{\sqrt{p \beta n_1\log n_1}}{p+q}2^{\frac{2}{\beta \log n_1}+\frac{3}{2}}  \nonumber \\
        & \leq C_K e\|\mathbf{E}\|_\infty \sqrt{\frac{\beta n_1\log n_1}{p+q}}2^{\frac{2}{\beta \log n_1}+\frac{3}{2}} \nonumber \\
        & \leq C_K e \nu_0 \sqrt{\frac{r}{n_1n_2}} \sqrt{\frac{\beta n_1\log n_1}{p+q}}2^{\frac{2}{\beta \log n_1}+\frac{3}{2}} \nonumber \\
        & = C_K e \nu_0 \sqrt{\frac{\beta r n_1\log n_1}{m+qn_1n_2}}2^{\frac{2}{\beta \log n_1}+\frac{3}{2}} \label{eq:H0_bound8}
    \end{align}
    where the second inequality comes from multiplying the RHS by $\sqrt{\frac{p+q}{q}}\geq 1$.
    The third inequality comes from applying Assumption \ref{ass:maxEval}.}
    
    \fixes{From the Markov inequality with $t>0$, we have
    \begin{align}
        \mathbb{P}\left(\|S\|^b\geq t^b \mathbb{E}\left(\|\mathbf{S}\|^b\right) \right) & \leq t^{-b} \nonumber \\
        \mathbb{P}\left(\|S\|\geq t \left(\mathbb{E}\left(\|\mathbf{S}\|^b\right)\right)^{\frac{1}{b}} \right) & \leq t^{-b} \nonumber
    \end{align}
    and then plugging in \eqref{eq:H0_bound8}, $b=\beta \log n_1$, and $t=e$, it becomes
    \begin{align}
        \mathbb{P}\left(\|S\|\geq C_K e^2 \nu_0 \sqrt{\frac{\beta r n_1\log n_1}{m+qn_1n_2}}2^{\frac{2}{\beta \log n_1}+\frac{3}{2}} \right) \leq n_1^{-\beta}. \label{eq:H0_bound9}
    \end{align}}
    
    \fixes{Now we focus on bounding the second term of \eqref{eq:H0_bound1}
    \begin{align}
        \frac{q}{p+q}\left\|\mathcal{P}_{\mathcal{Q}^\perp}(\mathbf{E})\right\| & \leq \frac{q}{p+q}\left\|\mathcal{P}_{\mathcal{Q}^\perp}(\mathbf{E})\right\|_F \nonumber \\
        & = \frac{q\sqrt{\nu_{\mathcal{Q}^\perp}r}}{p+q} \nonumber \\
        & = \frac{qn_1n_2\sqrt{\nu_{\mathcal{Q}^\perp}r}}{m+qn_1n_2} \nonumber \\
        & \leq \sqrt{\frac{qn_1n_2\sqrt{\nu_{\mathcal{Q}^\perp}r}}{m+qn_1n_2}} \label{eq:H0_bound10}
    \end{align}
    where the first equality comes from \eqref{eq:nuQperp_defn} and the second inequality is a result of assuming that $\frac{qn_1n_2\sqrt{\nu_{\mathcal{Q}^\perp}r}}{m+qn_1n_2} \leq 1$}.
    
    \fixes{Finally, putting \eqref{eq:H0_bound9} and \eqref{eq:H0_bound10} into \eqref{eq:H0_bound1}, get the resultant.}
\end{proof}

\subsubsection{Lemma \ref{thm:neighborhood1}}
\label{sec:neighborhood1}

\begin{proof}
    \fixes{This corollary is proved by showing that the right-hand sides of \eqref{eq:thm_samp_complex_a} - \eqref{eq:thm_samp_complex_e} are less than equal to zero under the stated conditions for some sufficiently large value of $q$.}
    
    \fixes{First, we examine the RHS of \eqref{eq:thm_samp_complex_a} by setting $\epsilon_1:=2(\nu_{\mathcal{Q}^\perp}r)^\frac{1}{4}$ which means that $\epsilon_1<1$ from the corollary's condition:
    \begin{align}
        & \left(C_K e^2 \nu_0 \sqrt{\beta r n_1\log n_1}2^{\frac{2}{\beta \log n_1}+\frac{5}{2}}+\epsilon_1\sqrt{qn_1n_2}\right)^2 - qn_1n_2 \nonumber \\
        & = \left(C_K e^2 \nu_0 \sqrt{\beta r n_1\log n_1}2^{\frac{2}{\beta \log n_1}+\frac{5}{2}}+(\epsilon_1-1)\sqrt{qn_1n_2}\right) \nonumber \\
        & \quad \times \left(C_K e^2 \nu_0 \sqrt{\beta r n_1\log n_1}2^{\frac{2}{\beta \log n_1}+\frac{5}{2}}+(\epsilon_1+1)\sqrt{qn_1n_2}\right). \nonumber
    \end{align}
    The above expression becomes non-positive from the first factor if $q$ is:
    \begin{align}
        q \geq \frac{C_K^2 e^4 \nu_0^2 \beta r n_1\log n_12^{\frac{4}{\beta \log n_1}+5}}{n_1n_2(1-\epsilon_1)^2}. \label{eq:neighborhood1_1} 
    \end{align}
    Thus, when the above inequality is true, then the RHS of \eqref{eq:thm_samp_complex_a} is less than or equal to 0.}
    
    \fixes{From the condition in the corollary's statement and setting $\epsilon_2:=\sqrt{10r\mu_{\mathcal{Q}^\perp}\mu_0n_1n_2}$ means that $\epsilon_2<1$ and the RHS of \eqref{eq:thm_samp_complex_b} is
    \begin{align}
        & \left(C_Rr\sqrt{10\beta \mu_0 n_1 \log n_1}+\epsilon_2\sqrt{qn_1n_2}\right) \nonumber \\
        & \quad \times(\sqrt{n_1n_2}+\sqrt{qn_1n_2}) - qn_1n_2 \nonumber \\
        & \quad\quad = C_Rrn_1\sqrt{10\beta \mu_0 n_2 \log n_1} - (1-\epsilon_2)qn_1n_2 \nonumber \\
        & \quad\quad\quad + (C_Rr\sqrt{10\beta \mu_0 n_1 \log n_1}+\epsilon_2\sqrt{n_1n_2})\sqrt{qn_1n_2} \nonumber \\
        & \quad\quad = C_Rrn_1\sqrt{10\beta \mu_0 n_2 \log n_1} \nonumber \\
        & \quad\quad\quad + \frac{(C_Rr\sqrt{10\beta \mu_0 n_1 \log n_1}+\epsilon_2\sqrt{n_1n_2})^2}{4(1-\epsilon_2)^2} - (1-\epsilon_2) \nonumber \\
        & \quad\quad\quad \times\left(\sqrt{qn_1n_2}-\frac{C_Rr\sqrt{10\beta \mu_0 n_1 \log n_1}+\epsilon_2\sqrt{n_1n_2}}{2(1-\epsilon_2)}\right)^2 \nonumber
    \end{align}
    The above expression becomes non-positive if $q$ is:
    \begin{align}
        q & \geq \frac{1}{n_1n_2}\Bigg(\frac{C_Rr\sqrt{10\beta \mu_0 n_1 \log n_1}+\epsilon_2\sqrt{n_1n_2}}{2(1-\epsilon_2)} \nonumber \\
        & \quad + \Bigg(\frac{(C_Rr\sqrt{10\beta \mu_0 n_1 \log n_1}+\epsilon_2\sqrt{n_1n_2})^2}{4(1-\epsilon_2)^3} \nonumber \\
        & \quad\quad + \frac{C_Rrn_1\sqrt{10\beta \mu_0 n_2 \log n_1}}{(1-\epsilon_2)} \Bigg)^{\frac{1}{2}} \Bigg)^2 \label{eq:neighborhood1_2} 
    \end{align}
    Thus, when the above inequality is true, then the RHS of \eqref{eq:thm_samp_complex_b} is less than or equal to 0.}
    
    \fixes{For \eqref{eq:thm_samp_complex_c}, the RHS is less than or equal to zero when
    \begin{align}
         q \geq \frac{2^4C_R^2\beta \mu_0 r \log n_1}{n_2}. \label{eq:neighborhood1_3} 
    \end{align}}
    
    \fixes{With the conditions stated in the corollary, the RHS for each of \eqref{eq:thm_samp_complex_d} and \eqref{eq:thm_samp_complex_e}, it can be directly observed that the right-hand sides are less than zero for any $q>0$.}
    
    \fixes{Thus, by setting $q$ to be the maximum of \eqref{eq:neighborhood1_1}, \eqref{eq:neighborhood1_2}, and \eqref{eq:neighborhood1_3}, all of the inequalities in Theorem \ref{thm:samp_complex} are satisfied for any $m>0$ except \eqref{eq:thm_samp_complex_f} which remains in Corollary \ref{thm:neighborhood1}}.
\end{proof}

\begin{figure*}
    \centering
    \includegraphics[width=0.80\textwidth]{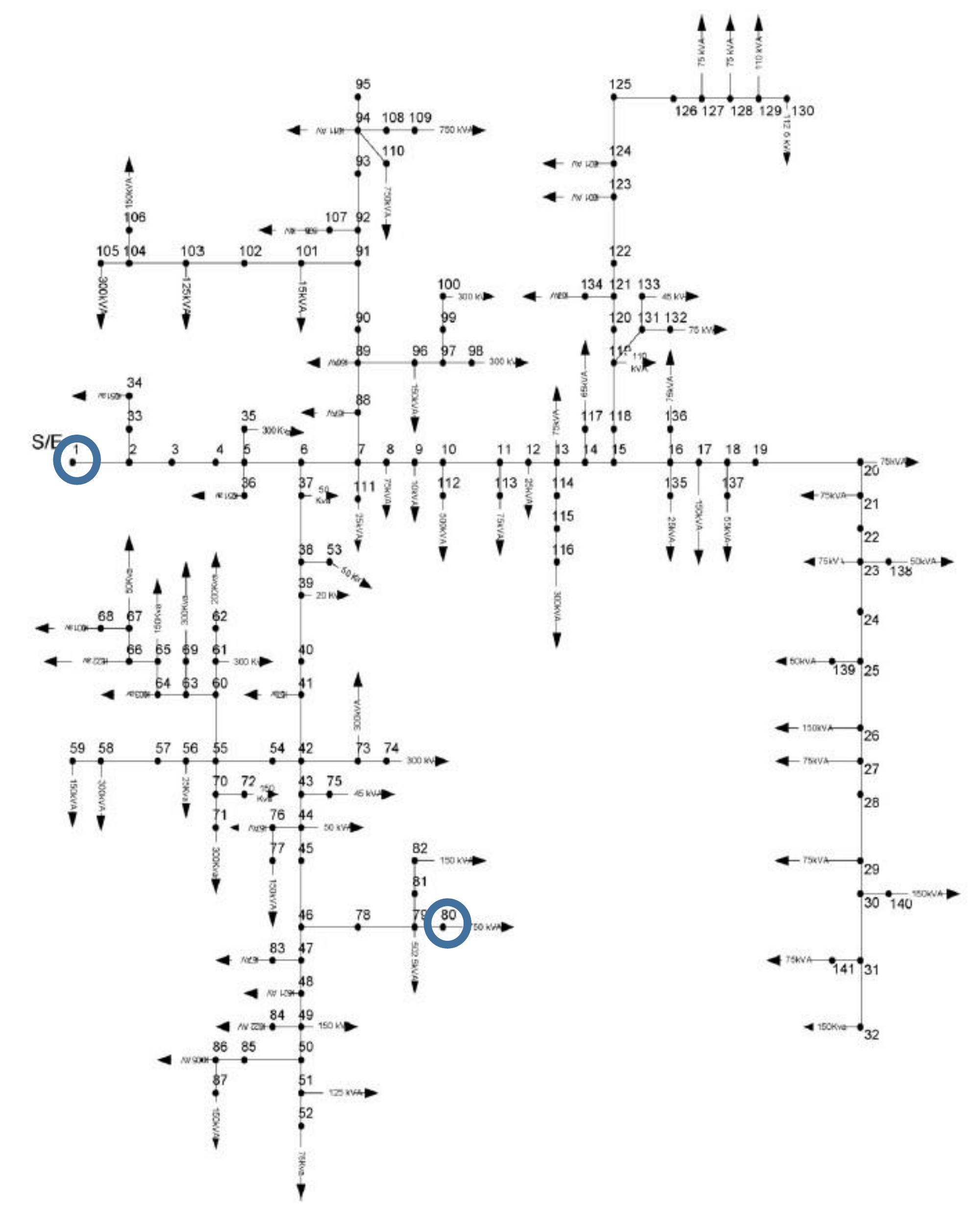}
    \caption{Diagram of the 141 bus distribution network~\cite{khodr2008maximum}.
    The two buses with PMUs are labeled by blue circles.}
    \label{fig:diagram_141bus_PMUs}
\end{figure*}

\subsection{Distribution Network Diagram}
\label{sec:app_dist}

Figure \ref{fig:diagram_141bus_PMUs} gives the diagram of the 141 bus distribution network~\cite{khodr2008maximum} used in the simulations of Section \ref{sec:perf_eval_distnet} with PMU placements denoted by blue circles.

\end{document}